\documentclass[a4paper,11pt]{amsart}
\input{Macros}
\usepackage[frenchb, english]{babel}
\usepackage{tabu}

\newcommand{\fb}{\mathfrak{b}}
\newcommand{\fm}{\mathfrak{m}}

\newcommand{\ff}{\mathfrak{f}}

\DeclareMathOperator{\Sch}{Sch}

\DeclareMathOperator{\Mum}{\mathcal{M}um}
\DeclareMathOperator{\Crys}{Crys}
\DeclareMathOperator{\RHom}{RHom}

\title{Pathological MMP singularities as $\alpha_p$-quotients}
\author{Quentin Posva}
\date{}

\address{Mathematisches Institut der Heinriche-Heine-Universität Düsseldorf, Universitätsstr.1, 24.13.03.33}
\email{quentin.posva@hhu.de}

\begin{document}

\maketitle

\begin{quote}
\textsc{Abstract.} We construct pathological examples of MMP singularities in every positive characteristic using quotients by $\alpha_p$-actions. In particular, we obtain non-$S_3$ terminal singularities, as well as locally stable (respectively stable) families whose general fibers are smooth (respectively klt, Cohen--Macaulay and $F$-injective) and whose special fibers are non-$S_2$. The dimensions of these examples are bounded below by a linear function of the characteristic.
\end{quote}

\tableofcontents

\section{Introduction}
Singularities of the Minimal Model Program (MMP) have poor cohomological properties in positive characteristic, unlike in characteristic $0$. Striking examples are canonical and terminal singularities that are not Cohen--Macaulay \cite{Kovacs_Non_CM_canonical_sing,
Bernasconi_Non_normal_plt_centers_in_pos_char, Yasuda_Discrepancies_p_cyclic_quotients, Totaro_Failure_Kodaira_vanishing_for_Fanos, Totaro_Terminal_non_CM_3folds}. In this article, we construct several new examples of non-$S_3$ singularities of the MMP in every positive characteristic $p>0$. 

The examples constructed in the aforementioned works are of two types: their stem either from a cone construction on a variety failing Kodaira vanishing, or from a wild $\bZ/p\bZ$-quotient. Our examples are $\alpha_p$-quotients: to understand their singularities we use the perspective of $1$-foliations and the tools developed in \cite{Posva_Singularities_quotients_by_1_foliations}.

Our first series of examples gives the following:

\begin{theorem_intro}[\autoref{thm:non_S3_qt}]\label{thm_intro:terminal_non_S3}
Let $k$ be an algebraically closed field of characteristic $p>0$. Then there exists an isolated $\bQ$-factorial singularity $(\bold{0}\in Y)$ over $k$ that is canonical (resp. terminal) of dimension $\max\{p, 3\}$ (resp. $p+1$) and non-$S_3$.
\end{theorem_intro}

In particular, we obtain a three-dimensional terminal singularity in characteristic $p=2$ that is not $S_3$: the other knowns examples of this dimension were all constructed by Totaro in positive and mixed characteristics $2,3$ and $5$ \cite{Totaro_Failure_Kodaira_vanishing_for_Fanos, Totaro_Terminal_non_CM_3folds}. For an arbitrary characteristic $p>2$, Totaro also constructed terminal non-$S_3$ cone singularities of dimension $2p+2$ \cite{Totaro_Failure_Kodaira_vanishing_for_Fanos}. In this regard, we achieve an improvement on the asymptotic dimensional lower bound. 

Note that Yasuda exhibits in \cite{Yasuda_Discrepancies_p_cyclic_quotients} a series of terminal and canonical singularities, starting in dimension $4$, which have depth $\geq 3$ but are not Cohen--Macaulay: their optimal asymptotic dimension, as a function of $p$, is roughly given by $\sqrt{2p}$.

We establish several other properties of the singularities $(\bold{0}\in Y)$ in \autoref{prop:not_1_Gor}, \autoref{prop:CM_up_to_nilpotence}, \autoref{cor:not_F_inj} and \autoref{prop:rational_up_to_Frob}. Notably, they are not $F$-injective, and they are rational up to Frobenius nilpotence.

\medskip
We use the same method to produce another series of pathological examples. In characteristic $0$, fibers of locally stable one-parameter families (in the sense of \autoref{def:local_stability}) are $S_2$ \cite[\S 7.3]{Kollar_Singularities_of_the_minimal_model_program}. This is true for one-parameter families of surfaces with residue characteristics different from $2$, $3$ and $5$ \cite{Arvidsson_Bernasconi_Patakfalvi_Properness_moduli_surfaces}, but recent examples of Koll\'{a}r \cite{Kollar_Families_of_3folds_in_pos_char} show that this property can fail dramatically for families of threefolds in every positive characteristic. Using $\alpha_p$-quotients, we construct new examples of this kind. First we obtain:

\begin{theorem_intro}[\autoref{thm:loc_stable_families_I}]\label{thm_intro:local_stability}
Let $k$ be an algebraically closed field of characteristic $p>0$. Then there exists a locally stable family $Y\to \bA^1$ over $k$ of relative dimension $\max\{p,3\}$ such that $Y_t$ is smooth for $t\neq 0$ and $Y_0$ is reduced but non-$S_2$.
\end{theorem_intro}



Then we compactify the families of \autoref{thm_intro:local_stability} to obtain pathological stable families (in the sense of \autoref{def:local_stability}):

\begin{theorem_intro}[\autoref{thm:stable_families}]\label{thm_intro:stability}
Let $k$ be an algebraically closed field of characteristic $p>0$. Then there exists a projective flat family of pairs $(\sY,\sB)\to \bA^1$ over $k$ of relative dimension $\max\{p,3\}$ such that:
	\begin{enumerate}
		\item\label{item_intro:a} $\sY$ is normal and $(\sY,\sB)\to \bA^1$ is stable,
		\item\label{item_intro:b} $\sY_t$ has only $\mu_p$-quotient singularities (or is regular if $p=2$) for $t\neq 0$,
		\item\label{item_intro:c} $\sY_0$ is reduced but non-$S_2$.
	\end{enumerate}
Moreover, if $C$ is a normal curve and if $C\to \bA^1$ is a finite flat morphism, then $\sY_C$ is normal and $(\sY_C,\sB_C)\to C$ is stable.
\end{theorem_intro}

Our method produces boundary divisors $\sB\neq 0$ whose coefficients are quite small, but the number of irreducible components of $\sB$ and the volume of $K_{\sY_t}+\sB_t$ are up to our liking: see \autoref{rmk:boundary_divisor} and the proof of \autoref{thm:KSBA_mod_stacks}.

The examples given by \autoref{thm_intro:stability} differ from Koll\'{a}r's ones \cite{Kollar_Families_of_3folds_in_pos_char} in two important ways: their dimension is constrained by the characteristic (while Koll\'{a}r constructs pathological families of threefold pairs in every characteristic, and indicates how to generalize the construction to any higher dimension), but their general fibers are klt, Cohen--Macaulay and $F$-injective by \cite[Theorems 1 and 2]{Posva_Singularities_quotients_by_1_foliations} (while the general fibres in Koll\'{a}r's examples are not klt nor Cohen--Macaulay: see \autoref{rmk:sing_Kollar_example}). Boundary divisors appear in both situations. We discuss the implications of these examples for moduli theory in \autoref{section:implications_moduli_theory} below.

\medskip
Let me sketch the construction of these examples. We start with a $1$-foliation $\sF$ of rank $1$ on an affine space $\bA^n$, and we wish to understand the singularities of $\bA^n/\sF$. There is a simple criterion guaranteeing that $\bA^n/\sF$ is only $S_2$ (see \cite[Remark 2.5.4]{Posva_Singularities_quotients_by_1_foliations}), so we only need to study its singularities in the sense of the MMP. If $F$ is a divisor over $\bA^n/\sF$, then by \cite[\S 4.2]{Posva_Singularities_quotients_by_1_foliations} we can compute the discrepancy $a(F;\bA^n/\sF)$ in terms of $a(E;\bA^n)$ and $a(E;\sF)$ where $E$ is the closure of $F$ in the function field of $\bA^n$ (this is recalled in \autoref{thm:sing_qt}). The problem is that we cannot estimate $a(E;\sF)$ easily. So we blow-up $\bA^n$ until the singularities of $\sF$ become log canonical. This leads to a commutative diagram
		$$\begin{tikzcd}
		X\arrow[d, "b"] \arrow[r] & Z=X/b^*\sF \arrow[d, "\fb"] \\
		\bA^n \arrow[r] & \bA^n/\sF
		\end{tikzcd}$$
where the horizontal arrows are the quotient morphisms, and  both morphisms $b$ and $\fb$ are birational. Writing 
		$$K_Z-\sum_ia_i F_i=\fb^*K_{\bA^n/\sF}$$
where the $F_i$'s are the $\fb$-exceptional divisors, we reduce to computing the discrepancies over the sub-pair $(Z, -\sum_ia_i F_i)$. The advantage is now that $b^*\sF$ is log canonical, so the tools given by \autoref{thm:sing_qt} are much more efficient. The appearance of the characteristic in the computations is due to the adjunction formula of \autoref{prop:log_adjunction}. 

For \autoref{thm_intro:local_stability} and \autoref{thm_intro:stability} we use a relative version of this construction. Proving that the central fiber of the quotient is not $S_2$ is more involved (as the criterion used before does not apply): we use the tools developed in \cite[\S 5]{Posva_Singularities_quotients_by_1_foliations} (see \autoref{thm:family_of_quotients}). To ensure that the fibers of the quotient family are regular in codimension one, we need to work in relative dimension $d$ at least $3$. When $d=2=p$ the output (\autoref{example:extreme_case_I}) is quite different: we obtain a locally stable family of surfaces $\sS\to \bA^1$ where $\sS$ is Cohen--Macaulay and $\sS_0$ has an inseparable node in the sense of \cite{Posva_Gluing_for_surfaces_and_threefolds}.

\medskip
In our examples, the $1$-foliation $\sF$ is generated by (some variation of) the quadratic derivation $\partial=\sum_{i=1}^nx_i^2\partial_{x_i}$. The latter satisfies $\partial^{[p]}=0$ in every characteristic $p>0$, and thus defines an $\alpha_p$-action on $\bA^n$ such that $\bA^n/\alpha_p=\bA^n/\partial$ \cite[Proposition 2.3.7]{Posva_Singularities_quotients_by_1_foliations}. If $n=2=p$ then $\bA^2/\partial$ is the RDP $D^0_4$. The quadratic derivation has the nice property that a single blow-up simplifies its singularities into lc ones, making the necessary computations straightforward.

In principle, other choices of $1$-foliations $\sF$ can lead to similar examples, possibly with better dimensional bounds. The difficult part is the computation of the blow-up sequence $b\colon X\to\bA^n$ needed to simplify the singularities of $\sF$. The results of \cite{Posva_Resolution_of_1_foliations} suggests that such a sequence usually exists, but the number of necessary blow-ups might be quite large. To illustrate this, in \autoref{section:alternative_derivation} we consider the derivation $y^3\partial_x+x\partial_y +t\partial_z$ on $\bA^3_{x,y,z}\times\bA^1_t$: this choice was motivated by the fact that the quotient of $\bA^2$ by $y^3\partial_x+x\partial_y$ gives the RDP $E^0_6$ in characteristic $3$. The quotient gives a pathological family of affine threefolds in characteristic $3$ (\autoref{thm:loc_stable_families_II}). This family is more difficult to compactify, as explained in \autoref{rmk:compactification_is_difficult}: we shall not pursue this task here.

\begin{remark}
The underlying families $Y\to \bA^1$ (or $\sY\to \bA^1$) appearing in \autoref{thm_intro:local_stability}, \autoref{thm_intro:stability} and \autoref{thm:loc_stable_families_II} share the following properties: the central fibers $Y_0$ do not have liftable cohomology (in the sense of \cite{Kollar_Kovacs_Deformations_of_lc_and_F-pure_sing}), the pair $(Y,Y_0)$ is not $F$-pure along $Y_0$, and $Y$ is not $F$-injective at the non-$S_2$-points of $Y_0$. All the other fibers $Y_t$ $(t\neq 0)$ have liftable cohomology, and $(Y\setminus Y_0,Y_t)$ is $F$-pure (\footnote{
	Here is a sketch of proof. The fiber $Y_0$ does not have liftable cohomology, as otherwise it would be $S_2$ (even CM) by \cite[Theorem 8.5]{Kollar_Kovacs_Deformations_of_lc_and_F-pure_sing}. If $(Y,Y_0)$ was $F$-pure, then $\depth \sO_{Y,y}\geq \min\{3,\dim \sO_{Y,y}\}$ for every $y\in Y_0$ by \cite[Lemma 3.4]{Polstra_Simpson_Tucker_F-pure_inversion_of_adjunction}, and so $Y_0$ would be $S_2$. The fact that $Y$ is not $F$-injective is proved exactly as in \autoref{cor:not_F_inj}. On the other hand, if $t\neq 0$ then $Y_t$ has only $\mu_p$-quotient singularities and thus it is $F$-pure \cite[Theorem 1]{Posva_Singularities_quotients_by_1_foliations} : this implies that $Y_t$ has liftable cohomology \cite[Corollary 7.3]{Kollar_Kovacs_Deformations_of_lc_and_F-pure_sing} and that $(Y,Y_t)$ is $F$-pure in a neighbourhood of $Y_t$ \cite[Theorem A]{Polstra_Simpson_Tucker_F-pure_inversion_of_adjunction}.
}).
\end{remark}

\subsection{Consequences for KSBA moduli theory of stable pairs}\label{section:implications_moduli_theory}
The existence of the pathological stable families of \autoref{thm_intro:stability} has the following consequence, which extends the main result of \cite{Kollar_Families_of_3folds_in_pos_char}:

\begin{theorem_intro}[\autoref{thm:KSBA_mod_stacks}]\label{thm_intro:KSBA_mod_stacks}
Fix an integer $n\geq 3$. Then in characteristics $p\leq n$ the stable families of $n$-dimensional Cohen--Macaulay pairs (resp.\ of pairs with $F$-injective underlying varieties) usually do not compactify to a proper KSBA moduli stack.
\end{theorem_intro}

The formulation of that theorem requires an explanation. In positive characteristic, no precise definition of the KSBA moduli stack (\footnote{
		The distinction between moduli stacks and moduli functors is irrelevant for our discussion. While I use the language of stacks on a superficial level in this note, the reference for moduli theory of stable pairs in characteristic $0$ \cite{Kollar_Families_of_varieties_of_general_type} takes the viewpoint of moduli functors.
}) 
of stable pairs has been have been proposed so far in the literature: defining families of pairs over general bases is the delicate part. It is easier to define boundary-free families: to wit, the moduli stacks of stable surfaces are defined in \cite[\S 1.3]{Patakfalvi_Projectivity_moduli_space_of_surfaces_in_pos_char} and studied in \cite{Patakfalvi_Projectivity_moduli_space_of_surfaces_in_pos_char, Arvidsson_Bernasconi_Patakfalvi_Properness_moduli_surfaces, Posva_Gluing_for_surfaces_and_threefolds}. That definition can be extended to any dimension $d$, but then only few proprieties of the resulting stacks are known: for example, when $d\geq 3$ we do not know if they are algebraic. In any case, the boundary-free case is not our setting.

There is however no ambiguity about what the values of the KSBA moduli stacks should be over regular curves: stable families of pairs with demi-normal underlying fibers (see \autoref{def_app:stable_families}). If the moduli stacks would satisfy the valuative criterion of properness, then a stable family with demi-normal fibers over a punctured curve could be extended with a demi-normal fiber, possibly after replacing the base with a finite base-change. The examples of \autoref{thm_intro:stability} show that this is usually not possible when the characteristic is small compared to the dimension, even if the generic fiber has only klt, $F$-injective and Cohen--Macaulay singularities. 

This discussion and the content of \autoref{thm_intro:KSBA_mod_stacks} are made precise in \autoref{appendix}.

\medskip
Let us close this introduction with a few open questions related to KSBA moduli theory in positive characteristic:
\begin{question}
Let $k$ be an algebraically closed field of characteristic $p>0$. 
	\begin{enumerate}
		\item \emph{Are the poor properties of stable families of Cohen--Macaulay/$F$-injective pairs related to the ratio between the dimension and the characteristic?} More precisely, are there families of $n$-dimensional pairs such as those of \autoref{thm_intro:stability} if $p\gg n$?
		\item \emph{Do such pathologies also appear for stable families of $n$-dimensional varieties?} More precisely, are there stable families of $n$-dimensional varieties (without boundaries) over regular curves with pathologies such as those of \cite{Kollar_Families_of_3folds_in_pos_char} and \autoref{thm_intro:stability}? What if we assume that $p\gg n$, or that the generic fiber is Cohen--Macaulay or $F$-injective? 
		\item \emph{Are there pathological families of stable surface pairs in characteristics $\leq 5$?} 
	\end{enumerate}
\end{question}

\subsection{Acknowledgements}
The author is supported by the PostDoc Mobility grant P500PT-210980 from the Swiss National Science Foundation. I thank the Institute of Mathematics of the HHU Düsseldorf and Stefan Schröer's research group for their hospitality. I am also thankful to Zs. Patakfalvi and B. Totaro for several comments on early drafts to J. Baudin for discussions related to \cite{Baudin_Bernasconi_Kawakami_Frob_stable_GRV_fails}, and to the reviewer for several corrections.

\section{Preliminaries}

\subsection{Conventions and notations}
We work over an algebraically closed field $k$ of positive characteristic $p>0$.
	\begin{enumerate}
		\item A \emph{variety} (over $k$) is an integral quasi-projective $k$-scheme of finite type. A \emph{curve} (resp. \emph{surface}, \emph{threefold}) is $k$-variety of dimension one (resp. two, three).
		\item The conditions $S_i$ are the \emph{Serre's conditions}, see \cite[0341]{Stacks_Project}.
		\item Given a variety $X$, a divisor $E$ over $X$ is a prime divisor appearing on a (proper) birational model $\mu\colon Y\to X$. Its center $c_X(E)\subset X$ is the closure of $\mu(E)$.
		\item Let us recall some standard MMP terminology from \cite{Kollar_Mori_Birational_geometry_of_algebraic_varieties, Kollar_Singularities_of_the_minimal_model_program}. Let $X$ be a normal variety and $\Delta$ be a $\bQ$-divisor such that $K_X+\Delta$ is $\bQ$-Cartier. For a proper birational morphism $\mu\colon Y\to X$ with $Y$ normal we write
				$$K_Y =
				\mu^*(K_X+\Delta)+\sum_{E}
				a(E;X,\Delta)\cdot E.$$
		We define
				$$\discrep(X,\Delta)=\inf_E\{a(E;X,\Delta) \mid E\text{ exceptional over }X\}.$$
		We say that $(X,\Delta)$ is \emph{sub-lc} (resp.\ \emph{sub-klt}) if if for every $\mu$ and $E$ we have $a(E;X,\Delta)\geq -1$ (resp. $a(E;X,\Delta)>-1$). If $\Delta\geq 0$ and $(X,\Delta)$ is sub-lc (resp.\ sub-klt), we say that $(X,\Delta)$ is \emph{lc} (resp.\ \emph{klt}). We say that $(X,\Delta)$ is \emph{canonical} (resp.\ \emph{terminal}) if $\Delta$ is effective, and for every $\mu$ and $\mu$-exceptional $E$ we have $a(E;X,\Delta)\geq 0$ (resp. $a(E;X,\Delta)>0$).
		
		
		In case $(X,\Supp(\Delta))$ is log smooth, sufficient and necessary conditions on the coefficients of $\Delta$ for these conditions to hold are given in \cite[Corollary 2.11]{Kollar_Singularities_of_the_minimal_model_program}. 
In particular, a regular variety is terminal.

		\item We will several times use tacitly the following fact \cite[Lemma 2.5]{Kollar_Singularities_of_the_minimal_model_program}: if $K_X+\Delta$ is $\bQ$-Cartier, if $\mu\colon Y\to X$ is as birational with $Y$ normal, and if
				$$K_Y+\mu^{-1}_*\Delta+F=\mu^*(K_X+\Delta) \quad \text{where }F\text{ is }\mu\text{-exceptional},$$
	then $a(E;X;\Delta)=a(E;Y,\mu_*^{-1}\Delta+F)$ for every divisor $E$ over $X$.
	\end{enumerate}
	
\medskip
We also use the following terminology (see the \autoref{appendix} for more details):
\begin{definition}[{\cite[(2.3, 2.44)]{Kollar_Families_of_varieties_of_general_type}}]\label{def:local_stability}
Let $C$ be a normal $k$-variety of dimension $\leq 1$, let $f\colon (X,\Delta)\to C$ be a family of pairs \cite[Definition 2.2]{Kollar_Families_of_varieties_of_general_type} (in particular, the fibers $X_c$ are geometrically reduced), and assume that $X$ is normal. Then we say that $f$ is \emph{locally stable} if $(X,\Delta+X_\alpha)$ is lc for every $\alpha\in C(k)$. We say that $f$ is \emph{stable} if it is proper, locally stable and $K_X+\Delta$ is $f$-ample.
\end{definition}
	
We record the following well-known result.

\begin{lemma}\label{lemma:discrepancy_over_reg}
Let $X$ be a regular $k$-variety and $G$ be a divisor over $X$ with non-empty center $\mathfrak{c}=c_X(G)\subset X$. Then $a(G;X)\geq \codim_X(\mathfrak{c})-1$.
\end{lemma}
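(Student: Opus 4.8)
The plan is to realise $G$ by a sequence of blow-ups along \emph{regular} centers and to induct on the length of that sequence. The key geometric input is the classical, characteristic-free fact that a divisor $G$ over a regular variety $X$ is extracted by a finite chain
$$X_N \xrightarrow{\pi_N} X_{N-1}\to \cdots \xrightarrow{\pi_1} X_0 = X,$$
in which each $\pi_i$ is the blow-up of a regular irreducible center $Z_{i-1}\subset X_{i-1}$ of codimension $\geq 2$, every $X_i$ is again regular, and $G$ is a prime divisor on $X_N$. I would cite this from the resolution-of-valuations literature and flag it as the one nontrivial ingredient, since everything else is bookkeeping. The induction is on the minimal number $N$ of blow-ups needed to extract $G$.

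For $N=0$, the divisor $G$ lies on $X$ itself, so $\mathfrak c=G$ has codimension $1$ and $a(G;X)=0=\codim_X(\mathfrak c)-1$. For the inductive step, let $\pi\colon X'\to X$ be the first blow-up, with regular irreducible center $Z$ of codimension $c\geq 2$ and exceptional divisor $E$; then $X'$ is regular and $G$ is extracted over $X'$ in $N-1$ steps. The adjunction for a blow-up of a regular center in a regular variety gives $K_{X'}=\pi^*K_X+(c-1)E$, so by the discrepancy transformation rule \cite[Lemma 2.5]{Kollar_Singularities_of_the_minimal_model_program} one has $a(G;X)=a\bigl(G;X',-(c-1)E\bigr)$. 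Comparing this with the empty-boundary discrepancy yields
$$a(G;X)=a(G;X')+(c-1)\cdot\ord_G(E),$$
where $\ord_G(E)$ is a non-negative integer, positive exactly when the center $\mathfrak c'=c_{X'}(G)$ is contained in $E$.

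I would then split into two cases according to whether $\mathfrak c'\subseteq E$. If $\mathfrak c'\not\subseteq E$, then $\ord_G(E)=0$ and $\pi$ is an isomorphism near the generic point of $\mathfrak c'$, so $\codim_{X'}(\mathfrak c')=\codim_X(\mathfrak c)$ and the inductive hypothesis applied to $X'$ gives the bound at once. If $\mathfrak c'\subseteq E$, then $\ord_G(E)\geq 1$, and the projection $\pi|_E\colon E\to Z$ is a $\bP^{c-1}$-bundle; since $\mathfrak c=\pi(\mathfrak c')\subseteq Z$, the fibers of $\mathfrak c'\to\mathfrak c$ have dimension at most $c-1$, whence $\dim\mathfrak c'\leq\dim\mathfrak c+(c-1)$, i.e. $\codim_{X'}(\mathfrak c')\geq\codim_X(\mathfrak c)-(c-1)$. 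Feeding the inductive hypothesis $a(G;X')\geq\codim_{X'}(\mathfrak c')-1$ and the bound $\ord_G(E)\geq 1$ into the displayed formula gives
$$a(G;X)\geq\bigl(\codim_{X'}(\mathfrak c')-1\bigr)+(c-1)\geq\bigl(\codim_X(\mathfrak c)-c\bigr)+(c-1)=\codim_X(\mathfrak c)-1.$$

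The only genuine obstacle is the realisation theorem: one must know that $G$ can be reached by blowing up \emph{regular} centers, so that regularity—and hence the clean adjunction $K_{X'}=\pi^*K_X+(c-1)E$—is preserved at every stage and the inductive hypothesis applies to $X'$. Everything else (the transformation rule, the fiber-dimension estimate for $E\to Z$, and the two-case analysis) is elementary and characteristic-independent.
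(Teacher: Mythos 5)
Your discrepancy bookkeeping is entirely correct: the recursion $a(G;X)=a(G;X')+(c-1)\cdot\ord_G(E)$ obtained from $K_{X'}=\pi^*K_X+(c-1)E$, the two-case split on whether $\mathfrak c'=c_{X'}(G)$ lies in $E$, and the estimate $\codim_{X'}(\mathfrak c')\geq \codim_X(\mathfrak c)-(c-1)$ coming from the $\bP^{c-1}$-bundle $E\to Z$ are all sound, and together they close the induction. This is a slightly different organization than the paper's: the paper also works along a chain of blow-ups, but always along the successive centers of $G$ itself, and instead of inducting on the length of the chain it shows directly that the coefficients $a_i$ in $f^*K_X=K_{X_n}-\sum_i a_iE_i$ increase strictly, so that $a(G;X)=a_n\geq a_1=\codim_X(\mathfrak c)-1$. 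Your version has the mild merit of applying to \emph{any} regular-center chain extracting $G$, not just the canonical one.

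The gap is exactly in the ingredient you black-box. The ``realisation theorem'' in the form you state it --- a chain of blow-ups of \emph{regular} centers over the variety $X$ itself, with every $X_i$ regular and $G$ appearing on $X_N$ --- is not a citable classical fact, least of all in positive characteristic, which is the setting of this paper. What is classical (Zariski; \cite[Lemma 2.45]{Kollar_Mori_Birational_geometry_of_algebraic_varieties}, the result the paper invokes) is that blowing up the successive centers $c_{X_i}(G)$ eventually extracts $G$; but those centers can be singular, hence the $X_i$ can be singular, and then the adjunction $K_{X'}=\pi^*K_X+(c-1)E$ on which your entire induction rests is unavailable. The missing idea is the paper's localization step: since $a(G;X)$ and $\codim_X(\mathfrak c)$ depend only on a neighbourhood of the generic point of the center, one may localize each $X_i$ at the generic point of $c_{X_i}(G)$; an integral scheme is regular near its generic point, and blow-ups of regular centers preserve regularity, so after this harmless shrinking Zariski's chain \emph{is} a regular-center chain and your induction runs verbatim. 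With that modification your proof is complete; without it, it rests on a citation that does not exist in the stated generality.
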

\begin{proof}
By a result of Zariski \cite[Lemma 2.45]{Kollar_Mori_Birational_geometry_of_algebraic_varieties}, there is a finite sequence of birational proper morphisms
		$$X_n\overset{f_n}{\longrightarrow} X_{n-1} 
		\overset{f_{n-1}}{\longrightarrow} \dots
		\overset{f_1}{\longrightarrow} X_0=X$$
such that each $f_i$ is the blow-up of $X_{i-1}$ along the center $\mathfrak{c}_{i-1}$ of $G$ on $X_{i-1}$, and $G$ appears on $X_n$ as an $f_n$-exceptional divisor dominating $\mathfrak{c}_{n-1}$. To prove the statement we may localize each $X_i$ in a neighbourhood of the generic point of $\mathfrak{c}_{i}$, and so we can assume that each $\mathfrak{c}_i$ is a regular subvariety and that each $X_i$ is regular. Let $E_i\subset X_i$ be the unique $f_i$-exceptional divisor: since $E_i=f_i^{-1}(\mathfrak{c}_{i-1})$ it holds that $\mathfrak{c}_i\subseteq E_i$ for each $i$, with equality if and only if $i=n$. By abuse of notation, the strict transform of $E_i$ on each $X_{i+1},\dots,X_n$ will also be denoted by $E_i$.

Let $f\colon X_n\to X$ be the composition of all blow-ups. We write
		$$f^*K_{X}=K_{X_n}-\sum_{i=1}^{n}a_iE_i.$$
We claim that $a_{i+1}>a_i$ for all $i$: since $a_1=\codim(\mathfrak{c})-1$ and $E_n=G$, the result will follow. These inequalities are easily proved by induction: we have
		$$f_i^*\left(K_{X_{i-1}}-\sum_{j=1}^{i-1}a_jE_j\right)
		= K_{X_i}-\sum_{j=1}^{i-1}a_jE_j
		-\left(\codim_{X_{i-1}}(\mathfrak{c}_{i-1})-1
		+\mult_{\mathfrak{c}_{i-1}}\sum_{j}a_jE_j\right)
		\cdot E_i.$$
Since $\mathfrak{c}_{i-1}\subsetneq E_{i-1}$ we have 
	$$a_i=\codim_{X_{i-1}}(\mathfrak{c}_{i-1})
		-1+\mult_{\mathfrak{c}_{i-1}}\sum_{j}a_jE_j
	\geq 1+a_{i-1}.$$
This proves the lemma.		
\end{proof}

\subsection{1-Foliations}
For basics on derivations and $1$-foliations, we refer to \cite{Posva_Singularities_quotients_by_1_foliations}. We recall below the most important facts for us. For derivations we need:

\begin{lemma}\label{lemma:p_power_of_derivation}
Let $\partial$ be a $p$-closed derivation on an integral scheme $\Spec(A)$. If there exists $a\in A$ such that $\partial(a)\neq 0$ and $\partial^{[p]}(a)=h\partial(a)$ for some $h\in A$, then $\partial^{[p]}=h\partial$.
\end{lemma}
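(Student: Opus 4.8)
The plan is to use the definition of $p$-closedness to reduce the whole statement to a single cancellation in the domain $A$. Recall that, since we are in characteristic $p$, the $p$-fold iterate $\partial^{[p]}$ is again a derivation of $A$ (Jacobson's formula), so it makes sense to compare it with $\partial$ as derivations. That $\partial$ is $p$-closed means precisely that $\partial^{[p]}$ is proportional to $\partial$ over the fraction field $K=\Frac(A)$; equivalently, there exists $g\in K$ with $\partial^{[p]}=g\partial$ as derivations of $K$. The content of the lemma is then to identify this a priori fractional coefficient $g$ with the given element $h\in A$, using only the behaviour of the two derivations on the single element $a$.

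First I would invoke $p$-closedness to write $\partial^{[p]}=g\partial$ for some $g\in K$. Evaluating both sides at $a$ gives $\partial^{[p]}(a)=g\,\partial(a)$, and combining this with the hypothesis $\partial^{[p]}(a)=h\,\partial(a)$ yields $(g-h)\,\partial(a)=0$ in $K$. Since $\partial(a)\neq 0$ by assumption and $K$ is a field (as $A$ is a domain), I can cancel $\partial(a)$ to conclude $g=h$. In particular the fractional coefficient $g$ in fact lies in $A$ and equals $h$, so $\partial^{[p]}=h\partial$ as derivations of $K$; restricting to $A$ gives the claimed identity there.

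If instead the notion of $p$-closedness is phrased in the source as the pointwise proportionality condition $\partial^{[p]}(x)\,\partial(y)=\partial^{[p]}(y)\,\partial(x)$ for all $x,y\in A$ (rank at most one of the pair $(\partial,\partial^{[p]})$), I would argue directly without passing to $K$: for arbitrary $x\in A$, proportionality with $y=a$ gives $\partial^{[p]}(x)\,\partial(a)=\partial^{[p]}(a)\,\partial(x)=h\,\partial(a)\,\partial(x)$, and cancelling the nonzero factor $\partial(a)$ in the domain $A$ yields $\partial^{[p]}(x)=h\,\partial(x)$ for every $x$, which is exactly $\partial^{[p]}=h\partial$.

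The main point to get right is not any computation — there is essentially none — but the bookkeeping around the definition of $p$-closedness and the observation that detecting the coefficient on a single element $a$ with $\partial(a)\neq 0$ is legitimate. This works precisely because $p$-closedness forces $(\partial,\partial^{[p]})$ to have rank at most one over $K$, so the coefficient is determined by its value on any element not annihilated by $\partial$, and integrality of $A$ is what permits the cancellation.
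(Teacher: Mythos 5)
Your proof is correct. Note that the paper itself states this lemma without proof --- it is recalled as a known fact from \cite{Posva_Singularities_quotients_by_1_foliations} --- so there is no in-paper argument to compare against; your cancellation argument over $K=\Frac(A)$ (or, in your second variant, directly in the domain $A$) is the standard one, and it covers every reasonable formulation of $p$-closedness (coefficient in $A$, coefficient in $K$, or the rank-one wedge condition). One cosmetic remark: the fact that $\partial^{[p]}$ is again a derivation in characteristic $p$ follows from iterating the Leibniz rule and the vanishing of $\binom{p}{i}$ mod $p$ for $0<i<p$; Jacobson's formula is the statement about $(\partial+\partial')^{[p]}$, which your argument does not actually need.
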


For $1$-foliations, let us recall:

\begin{definition}
Let $\sF$ be a $1$-foliation on a normal variety $X$. Assume that the divisorial sheaf $\sO_X(K_\sF)=(\bigwedge^{\rank\sF}\sF)^{[-1]}$ is $\bQ$-Cartier. 
Then $\sF$ is \emph{log canonical (lc)} if for every proper birational morphism $\mu\colon Y\to X$ we can write
		$$K_{\mu^*\sF}=\mu^*K_\sF+\sum_{E: \ \mu\text{-exc.}} a(E;\sF) \cdot E$$
with 
		$$a(E;\sF)\geq
		\begin{cases}
		0 & \text{if }E\text{ is }\mu^*\sF\text{-invariant,}\\
		-1 & \text{otherwise.}
		\end{cases}$$
\end{definition}

\begin{definition}
Let $\sF$ be a $1$-foliation of rank $1$ on a normal variety $X$. Then $\sF$ has \emph{only multiplicative singularities} (\footnote{
	The definition \cite[Definition 2.4.7]{Posva_Singularities_quotients_by_1_foliations} is more general, but for $1$-foliations of rank $1$ the two definitions agree.
}) if for every $x\in \Sing(\sF)$, the restriction $\sF\otimes \widehat{\sO}_{X,x}$ is generated up-to-saturation by a continuous derivation $\partial\in \Der^{\text{cont}}_k \left(\widehat{\sO}_{X,x}\right)$ satisfying $\partial^{[p]}=u\partial$ where $u\in \widehat{\sO}_{X,x}^\times$.
\end{definition}

\begin{proposition}[{\cite[Corollary 1]{Posva_Singularities_quotients_by_1_foliations}}]\label{prop:mult_is_lc}
Let $X$ be a regular variety. If $\sF$ is a $1$-foliation of rank $1$, then $\sF$ has only multiplicative singularities if and only if $\sF$ is lc.
\end{proposition}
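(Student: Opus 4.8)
The plan is to reduce the statement to a purely local question at the singular points of $\sF$ and then to treat the two implications by explicit discrepancy computations. First I would note that both conditions are local on $X$ and trivial away from $\Sing(\sF)$: there $\sF$ is a saturated line subbundle of $T_X$, hence a smooth foliation with smooth quotient, so it is automatically lc and the multiplicative condition is vacuous. Thus I fix $x\in\Sing(\sF)$, pass to $\widehat{\sO}_{X,x}\cong k[[x_1,\dots,x_n]]$, and take a saturated generator $\partial$ of $\sF\otimes\widehat{\sO}_{X,x}$, which vanishes at $x$ in the sense that $\partial(\fm)\subseteq\fm$. Since $\sF$ is $p$-closed, applying \autoref{lemma:p_power_of_derivation} to a suitable $a$ with $\partial(a)\ne 0$ produces a well-defined $f$ with $\partial^{[p]}=f\partial$, and by definition $\sF$ is multiplicative at $x$ precisely when $f$ is a unit, i.e.\ $f(x)\ne 0$. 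The claim becomes: $f$ is a unit if and only if $\sF$ is lc at $x$.

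For the forward implication I would exploit that $f(x)\ne 0$ rigidifies the linear part of $\partial$. Writing $L$ for the linear part (a matrix $A$ acting via $\sum_i(Ax)_i\partial_{x_i}$), the relation $\partial^{[p]}=f\partial$ gives $A^p=f(x)A$ on linear parts, so every eigenvalue satisfies $\lambda=0$ or $\lambda^{p-1}=f(x)$; the minimal polynomial divides the separable polynomial $t(t^{p-1}-f(x))$, hence $A$ is diagonalizable over the algebraically closed field $k$. After normalizing $f$ to a constant and linearizing the associated $\mu_p$-action, I may choose formal coordinates with $\partial=\sum_i w_ix_i\partial_{x_i}$, so that $\sF$ is toric. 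I would then compute the discrepancies $a(E;\sF)$ of all divisors combinatorially, equivalently via weighted blow-ups. The single point blow-up already exhibits the dichotomy that forces log canonicity: for all weights equal the exceptional $E$ is non-invariant with $a(E;\sF)=-1$, while for genuinely distinct weights $E$ is invariant with $a(E;\sF)=0$, and in both cases the required lc inequality holds. Here the ambient discrepancy $a(E;X)$, bounded below by \autoref{lemma:discrepancy_over_reg}, enters through the comparison of $K_{\mu^*\sF}$ with $K_Y$, reducing the estimate to a bound on the tangency order of $\sF$ along $E$.

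For the converse I argue contrapositively: if $f(x)=0$ then $A$ is nilpotent, and I exhibit a divisor violating the lc bound. The guiding case is the quadratic derivation $\partial=\sum_i x_i^2\partial_{x_i}$, for which $\partial^{[p]}=0$. A single blow-up of $x$ transforms $\partial$ into $u\partial_u+(\text{terms vanishing on }E)$ in the standard chart $x_1=u,\ x_i=uv_i$; after removing the common factor $u$ the strict transform is tangent to the exceptional divisor $E=\{u=0\}$, so $E$ is $\mu^*\sF$-invariant, while the same factorization gives $a(E;\sF)=-1<0$. Since $E$ is invariant this contradicts log canonicity. The general non-multiplicative case is handled along the same lines once the nilpotent leading term is put in a normal form that produces an invariant exceptional divisor of negative discrepancy.

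The main obstacle is twofold. In the multiplicative case the sample blow-up is not the issue; the difficulty is controlling \emph{every} divisorial valuation simultaneously to obtain the uniform lower bound on $a(E;\sF)$, which requires either running the full toric computation or invoking a resolution of the foliation singularities while tracking discrepancies step by step — and throughout, the bookkeeping of which exceptional divisors are $\mu^*\sF$-invariant (so that the threshold is $0$ rather than $-1$) is the delicate recurring point. In the converse the obstacle is to show that \emph{no} non-multiplicative local model can be lc, i.e.\ that a nilpotent leading term always forces an invariant divisor of discrepancy $<0$, rather than merely verifying this for the quadratic prototype.
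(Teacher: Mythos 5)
First, a structural remark: the paper does not prove this proposition at all. It is quoted from \cite[Corollary 1]{Posva_Singularities_quotients_by_1_foliations} and used as a black box, so there is no internal proof to compare yours against; the fair comparison is with the cited reference, whose strategy (formal linearization of multiplicative singularities plus explicit discrepancy computations) your outline does resemble. Judged on its own terms, however, your write-up is a plan rather than a proof: both implications are left with substantive gaps, which you yourself label ``the main obstacle'' without closing them, and those gaps are precisely the content of the statement.

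Concretely: in the forward direction your reduction is sound --- locality, the identity $\partial^{[p]}=f\partial$ with $f(x)\neq 0$, diagonalizability of the linear part $A$ from $A^p=f(x)A$, and formal linearization to $\partial=\sum_i w_i x_i\partial_{x_i}$ via linear reductivity of $\mu_p$ --- and your computation of the point blow-up (invariant $E$ with $a(E;\sF)=0$ for distinct weights, non-invariant $E$ with $a(E;\sF)=-1$ for equal weights) is correct. But log canonicity quantifies over \emph{all} divisorial valuations, and checking one blow-up establishes nothing; the full toric computation, or a resolution of the foliation with discrepancy and invariance bookkeeping at every step, is the actual work and is only gestured at. The converse is in worse shape: ``non-multiplicative'' only means $f\in\fm$, i.e.\ the linear part is nilpotent, possibly zero, possibly not, with arbitrary higher-order terms. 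Your quadratic prototype (zero linear part) is one instance; a derivation such as $y^3\partial_x+x\partial_y$ in characteristic $3$ --- exactly the non-lc, non-multiplicative singularity analyzed in \autoref{lemma:blow-up_reduce_degree_II} of this paper, which needs \emph{two} blow-ups and a nontrivial invariance analysis --- has nonzero nilpotent linear part and is not reached by your argument. The sentence ``the general non-multiplicative case is handled along the same lines once the nilpotent leading term is put in a normal form that produces an invariant exceptional divisor of negative discrepancy'' assumes a normal-form theory that does not exist off the shelf in characteristic $p$; producing such an invariant divisor of negative discrepancy for every non-multiplicative singularity \emph{is} the theorem. (A minor slip as well: after the blow-up of the quadratic derivation the strict transform is $u\partial_u+\sum_i(-v_i+v_i^2)\partial_{v_i}$; the extra terms do not vanish on $E$, they are merely tangent to it --- the conclusion that $E$ is invariant with $a(E;\sF)=-1$ is still right.)
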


Pullbacks of $1$-foliations along smooth blow-ups can be computed as follows:
\begin{example}\label{example:pullback_smooth_blow-up}
Let $\pi\colon X\to \bA^n_{x_1,\dots,x_n}$ be the blow-up of the ideal $(x_1,\dots,x_r)$ for $r\leq n$. The $x_1$-chart $U_1$ of the blow-up $\pi^*\colon k[x_1,\dots,x_n]\to k[y_1,\dots,y_n]$ is given by 
				$$x_1\mapsto y_1, \quad
				x_i\mapsto y_1y_i \ (1<i\leq r), \quad
				x_j\mapsto y_j \ (j>r).$$
The pullbacks of derivations on $\bA^n$ by $\pi|_{U_1}$ can be computed using the following equalities:
				$$\pi^*\partial_{y_1}|_{U_1}=\partial_{y_1}-\sum_{i=2}^r\frac{y_i}{y_1}\partial_{y_i}, \quad
				\pi^*\partial_{x_i}|_{U_1}=\frac{1}{y_1}\partial_{y_i} \ (1<i\leq r), \quad
				\pi^*\partial_{x_j}|_{U_1}=\partial_{y_j} \ (j>r).$$
\end{example}

Let us recall some properties of quotients by $1$-foliations.

\begin{definition}[{\cite[Definition 4.2.1]{Posva_Singularities_quotients_by_1_foliations}}]
Let $X$ be a normal variety, $\sF$ a $1$-foliation on $X$ and $q\colon X\to X/\sF$ the quotient morphism. If $\Delta=\sum_i a_i\Delta_i$ is a $\bQ$-divisor on $X$, where $\Delta_i$ are prime divisors, then we let
		$$\Delta_{X/\sF}=\sum_i a_i \varepsilon_i q(\Delta_i)$$
where
		$$\varepsilon_i=
		\begin{cases}
		1 & \text{if }\Delta_i\text{ is }\sF\text{-invariant,}\\
		\frac{1}{p} & \text{otherwise.}
		\end{cases}$$
\end{definition}

\begin{proposition}[{\cite[Proposition 4.2.3]{Posva_Singularities_quotients_by_1_foliations}}]\label{prop:log_adjunction}
Let $X$ be a normal variety, $\Delta$ a $\bQ$-divisor on $X$, $\sF$ a $1$-foliation on $X$ and $q\colon X\to X/\sF$ the quotient morphism. Then we have an equality of $\bQ$-divisors
		$$q^*(K_{X/\sF}+\Delta_{X/\sF})=K_X+\Delta+(p-1)K_\sF.$$
\end{proposition}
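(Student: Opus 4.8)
The plan is to verify the identity one prime divisor at a time, after reducing to a locus where everything is smooth. Write $q\colon X\to Y:=X/\sF$; this is a finite purely inseparable morphism of degree $p^{\rank\sF}$. Since $X$ is normal and $q$ is finite, the preimage of a closed subset of codimension $\geq 2$ in $Y$ again has codimension $\geq 2$ in $X$. As all the sheaves involved ($\omega_X$, $\omega_Y$, $\sO_X(K_\sF)$, and the divisorial data of $\Delta$) are reflexive and the asserted equality is an equality of Weil $\bQ$-divisors, it suffices to prove it over the open set $U\subseteq X$ where $X$ is regular, $\sF\subseteq T_X$ is a subbundle, $Y$ is regular, and $q|_U$ is flat. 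There I would fix canonical divisors $K_X$ and $K_Y=K_{X/\sF}$ compatible under $q$, so that the claim becomes a comparison of coefficients along the prime divisors of $U$. The identity then splits into two independent contributions.

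The first is the ``Ekedahl part'' with $\Delta=0$, namely $q^*K_{X/\sF}=K_X+(p-1)K_\sF$, equivalently $\omega_X\otimes q^*\omega_Y^{-1}\cong(\det\sF)^{\otimes(p-1)}$ (recall $\sO_X(K_\sF)=(\det\sF)^{[-1]}$). The structural input I would use is that $\sF$ is exactly the relative tangent sheaf of $q$, i.e.\ $\Omega_{X/Y}\cong\sF^\vee$, so the cotangent sequence of $q$ fits into
		$$0\to K\to q^*\Omega_Y\to\Omega_X\to\sF^\vee\to 0$$
with $\rank K=\rank\sF$. Let $I$ be the image of $q^*\Omega_Y\to\Omega_X$; taking determinants of the two short exact sequences $0\to K\to q^*\Omega_Y\to I\to 0$ and $0\to I\to\Omega_X\to\sF^\vee\to 0$ gives $\det I=\omega_X\otimes\det\sF$ and then $\det K\cong q^*\omega_Y\otimes\omega_X^{-1}\otimes(\det\sF)^{-1}$. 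The whole formula therefore reduces to the single identification $\det K\cong(\det\sF)^{\otimes(-p)}$. This is the heart of the matter: $K$ is the kernel of the Frobenius-type map $q^*\Omega_Y\to\Omega_X$, and the exponent $p$ reflects that $q$ factors the relative Frobenius. I expect this determinant computation to be \textbf{the main obstacle}, and I would carry it out locally using a $p$-basis of $\sO_X$ over $\sO_Y$ adapted to a local frame of $\sF$, checking that the transition functions introduce precisely the $(\det\sF)^{p-1}$ twist.

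The second contribution is the transform of the boundary. For a prime component $\Delta_i$ I would compute $q^*q(\Delta_i)$ at the generic point of $\Delta_i$, where $\sO_{Y,q(\Delta_i)}\subseteq\sO_{X,\Delta_i}$ is a purely inseparable extension of discrete valuation rings of degree $p$. Two cases occur: if $\Delta_i$ is $\sF$-invariant the extension is unramified with purely inseparable residue extension ($e=1$), so $q^*q(\Delta_i)=\Delta_i$; if $\Delta_i$ is not $\sF$-invariant the extension is totally ramified ($e=p$), so $q^*q(\Delta_i)=p\,\Delta_i$. These are matched exactly by the factors $\varepsilon_i$ in the definition $\Delta_{X/\sF}=\sum_i a_i\varepsilon_i\,q(\Delta_i)$, giving $\varepsilon_i\,q^*q(\Delta_i)=\Delta_i$ in both cases and hence $q^*\Delta_{X/\sF}=\Delta$.

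Combining the two parts by additivity of pullback yields
		$$q^*(K_{X/\sF}+\Delta_{X/\sF})=\bigl(K_X+(p-1)K_\sF\bigr)+\Delta$$
on $U$, and therefore on all of $X$. The only remaining bookkeeping is to confirm that the supports of the Ekedahl ramification and of the $\Delta_i$ may overlap without affecting the additive comparison of coefficients, and that the $\bQ$-Cartier hypothesis on $K_\sF$ is what makes $q^*$ well defined on the classes in question; both are routine once the determinant computation $\det K\cong(\det\sF)^{\otimes(-p)}$ is in place.
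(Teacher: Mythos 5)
The paper itself contains no proof of \autoref{prop:log_adjunction}: it is quoted from \cite[Proposition 4.2.3]{Posva_Singularities_quotients_by_1_foliations}, where it is obtained as a log version of Ekedahl's adjunction formula for quotients by $1$-foliations plus a ramification computation for the boundary. Measured against that, your outer skeleton is sound. The reduction to the big open locus where $X$ and $Y=X/\sF$ are regular and $\sF$ is a subbundle is legitimate, since both sides are Weil $\bQ$-divisor classes determined in codimension one. Your boundary analysis is correct and matches the design of the coefficients $\varepsilon_i$: the ramification index at a prime divisor is $1$ for $\sF$-invariant components and $p$ for non-invariant ones, so $\varepsilon_i\,q^*q(\Delta_i)=\Delta_i$ in both cases (one caveat: your justification via ``purely inseparable extension of DVRs of degree $p$'' is a rank-one argument; for $\rank\sF=r>1$ the extension has degree $p^r$, and one should instead note that $e$ divides $p$ because $v_Y$ restricted from $v_X$ takes the value $p/e$ on the $p$-th power of a uniformizer). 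Finally, your reduction of the $\Delta=0$ part to the single claim $\det K\cong(\det\sF)^{\otimes(-p)}$ is arithmetically consistent with the convention $\sO_X(K_\sF)=(\det\sF)^{[-1]}$.

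The genuine gap is that this last claim --- which you yourself call ``the main obstacle'' --- is never proved, and it is not a technicality: with $\Delta=0$ the proposition \emph{is} Ekedahl's formula, so deferring $\det K\cong(\det\sF)^{\otimes(-p)}$ amounts to assuming the theorem. Your sketched plan (a $p$-basis adapted to a local frame of $\sF$, then transition functions) hides real content: frames in which the quotient becomes explicit exist only after \'{e}tale or formal localization (the local structure theorem for smooth $1$-foliations), after which one still must globalize a cocycle computation. The idea that makes the step clean, and which your sketch gestures at (``$q$ factors the relative Frobenius'') without using, is the factorization $F_{X/k}\colon X\xrightarrow{\,q\,}Y\xrightarrow{\,q'\,}X^{(p)}$, where $q'$ is the quotient of $Y$ by the conjugate foliation $\sG=T_{Y/X^{(p)}}$, whose own conjugate is the Frobenius twist $\sF^{(p)}$. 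On the nice locus the cotangent sequences of these two quotients are short exact,
$$0\to q^*\sG^\vee\to\Omega_X\to\sF^\vee\to 0, \qquad 0\to (q')^*\bigl(\sF^{(p)}\bigr)^\vee\to\Omega_Y\to\sG^\vee\to 0,$$
(generic injectivity plus torsion-freeness of the kernels), so your kernel is identified as $K=q^*(q')^*(\sF^{(p)})^\vee=F_{X/k}^*(\sF^{(p)})^\vee$, and $\det K\cong(\det\sF)^{\otimes(-p)}$ follows because Frobenius pullback raises line bundles to the $p$-th power. Equivalently, taking determinants in the two sequences gives $\omega_X=q^*\det\sG^\vee\otimes\det\sF^\vee$ and $q^*\omega_Y=(\det\sF)^{\otimes(-p)}\otimes q^*\det\sG^\vee$, whence $q^*K_Y=K_X+(p-1)K_\sF$. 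Without this argument --- or an explicit citation of Ekedahl for the $\Delta=0$ case --- your proof is incomplete at its core.
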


The following theorem is our main tool to compute the discrepancies over quotient singularities.

\begin{theorem}\label{thm:sing_qt}
Let $(X,\Delta)$ be a normal sub-pair, $\sF$ be a $1$-foliation on $X$ and $q\colon (X,\Delta)\to (Y=X/\sF,\Delta_Y)$ be the associated quotient morphism. 
	\begin{enumerate}
		\item If $\sF$ is lc and $(X,\Delta)$ is sub-lc (respectively sub-klt), then $(Y,\Delta_Y)$ is sub-lc (respectively sub-klt).
	\end{enumerate}
Let $\mu\colon Y'\to Y$ be a birational proper morphism. Then:
	\begin{enumerate}\setcounter{enumi}{1}
		\item If $X'$ is the normalization of $Y'$ in $K(X)$, then we have a commutative diagram
			$$\begin{tikzcd}
			X'\arrow[r, "q'"]\arrow[d, "\mu'"] & Y'\arrow[d, "\mu"] \\
			X\arrow[r, "q"] & Y
			\end{tikzcd}$$
		where $\mu'$ is a proper birational morphism, and $q'$ is the quotient by $(\mu')^*\sF$.
		\item If $E\subset X'$ is a prime divisor with image the prime divisor $q'(E)=F\subset Y'$, then
				$$a(F; Y,\Delta_Y)=
				\begin{cases}
				a(E; X,\Delta)+(p-1)\cdot a(E; \sF) &
					\text{if }E\text{ is }(\mu')^*\sF\text{-invariant,} \\
				\frac{1}{p}\left[
				a(E; X,\Delta)+(p-1)\cdot a(E; \sF)\right]
				 & \text{otherwise}.
				\end{cases}$$
	\end{enumerate}
\end{theorem}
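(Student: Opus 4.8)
The plan is to prove the three parts in the order (2), (3), (1): part (2) sets up the geometry, part (3) is the computational heart, and part (1) falls out numerically. For (2), I would take $X'$ to be the normalization of $Y'$ in the function field $K(X)$. Since $\sF$ has rank $1$, the extension $K(X)/K(Y)$ is purely inseparable of degree $p$, so finiteness of normalization makes $q'\colon X'\to Y'$ finite, and the universal property of $X$ (as the normalization of $Y$ in $K(X)$) produces $\mu'\colon X'\to X$ making the square commute. Properness of $\mu'$ follows from that of $\mu$ together with finiteness of $q$ and $q'$, and $\mu'$ is birational because it is an isomorphism over the locus where $\mu$ is. To see that $q'$ is the quotient by $(\mu')^*\sF$, note that every function in $K(Y)=\ker\big(\partial|_{K(X)}\big)$ is annihilated by the saturated derivation $\partial'$ generating $(\mu')^*\sF$, so $q'$ factors through a finite birational morphism $X'/(\mu')^*\sF\to Y'$ of normal varieties, which is therefore an isomorphism.

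For (3) I would run two applications of \autoref{prop:log_adjunction}. Write the sub-pair relation on $X'$ as $K_{X'}+\Delta'=(\mu')^*(K_X+\Delta)$, where $\Delta'=-\sum_E a(E;X,\Delta)\,E$ (the sum over all prime divisors $E$ on $X'$), and the foliation relation $K_{(\mu')^*\sF}=(\mu')^*K_\sF+\sum_E a(E;\sF)\,E$. Applying \autoref{prop:log_adjunction} to $q'$ gives $(q')^*(K_{Y'}+\Delta'_{Y'})=K_{X'}+\Delta'+(p-1)K_{(\mu')^*\sF}$, while applying it to $q$ and pulling back by $\mu'$, using $q\circ\mu'=\mu\circ q'$, gives $(q')^*\mu^*(K_Y+\Delta_Y)=(\mu')^*(K_X+\Delta)+(p-1)(\mu')^*K_\sF$. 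Subtracting the second from the first and substituting the two relations above, the $(\mu')^*(K_X+\Delta)$ and $(p-1)(\mu')^*K_\sF$ terms cancel, leaving
\[
(q')^*\Big[(K_{Y'}+\Delta'_{Y'})-\mu^*(K_Y+\Delta_Y)\Big]=(p-1)\sum_E a(E;\sF)\,E.
\]

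Next I would substitute $K_{Y'}-\mu^*(K_Y+\Delta_Y)=\sum_F a(F;Y,\Delta_Y)\,F$ and use the pullback behaviour of divisors under the purely inseparable $q'$. The $\varepsilon$-weights in the definition of $\Delta'_{Y'}$ are chosen precisely so that $(q')^*(\varepsilon_E\, q'(E))=E$, and this forces $(q')^*F=E$ when $E$ is $(\mu')^*\sF$-invariant and $(q')^*F=p\,E$ otherwise, where $F=q'(E)$. Pulling the whole identity back by $q'$ and comparing the coefficient of each prime divisor $E$ then yields $a(F;Y,\Delta_Y)-a(E;X,\Delta)=(p-1)a(E;\sF)$ in the invariant case and $p\,a(F;Y,\Delta_Y)-a(E;X,\Delta)=(p-1)a(E;\sF)$ in the non-invariant case, which rearrange to the two displayed formulas.

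Finally (1) drops out: by the construction of (2), every divisor $F$ over $Y$ equals $q'(E)$ for some divisor $E$ over $X$, so the formulas of (3) apply to all of them. If $\sF$ is lc (so $a(E;\sF)\geq 0$ for invariant $E$ and $a(E;\sF)\geq -1$ otherwise) and $(X,\Delta)$ is sub-lc (resp.\ sub-klt), then in the invariant case $a(F;Y,\Delta_Y)\geq -1+0=-1$ (resp.\ $>-1$), and in the non-invariant case $a(F;Y,\Delta_Y)\geq \tfrac1p\big[-1+(p-1)(-1)\big]=-1$ (resp.\ $>-1$). The step I expect to be the main obstacle is the coefficient bookkeeping in (3): pinning down the pullback multiplicities $1$ versus $p$ for invariant versus non-invariant divisors and applying the $\varepsilon$-weighting of $\Delta'_{Y'}$ consistently, since a single misplaced factor of $p$ would corrupt both the discrepancy formula and the sharp lc bound in (1).
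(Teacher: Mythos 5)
Your proposal is correct, but be aware that the paper does not actually reprove this statement: its ``proof'' is a one-line citation to \cite[Theorem 4.2.5]{Posva_Singularities_quotients_by_1_foliations}, plus the remark that the effectivity hypothesis on $\Delta$ made there is not used. Your argument is therefore a self-contained reconstruction, and it runs along what is surely the same mechanism as the cited proof: part (2) via the universal property of normalization plus Zariski's main theorem, part (3) via a double application of \autoref{prop:log_adjunction} (to $q$ pulled back along $\mu'$, and to $q'$ with the crepant sub-boundary $\Delta'$), and part (1) as a numerical corollary. In particular, your computation visibly never uses $\Delta\geq 0$, which substantiates the paper's remark.

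Two points need tightening. First, in (2) you assume $\sF$ has rank $1$ (``degree $p$''), but the theorem allows an arbitrary $1$-foliation, for which $K(X)/K(Y)$ is purely inseparable of degree $p^{\rank\sF}$; nothing else in your argument changes, so this is harmless but should be stated correctly. Second, your justification of the pullback multiplicities is circular as written: saying that ``the $\varepsilon$-weights are chosen precisely so that $(q')^*(\varepsilon_E\,q'(E))=E$, and this forces $(q')^*F=E$ or $pE$'' lets a definition of coefficients do the work of a geometric ramification statement. The clean fix stays entirely inside the quoted proposition: apply \autoref{prop:log_adjunction} to $q'$ once with boundary $E$ and once with boundary $0$, and subtract; this gives $(q')^*(\varepsilon_E\,q'(E))=E$ for every prime divisor $E$ on $X'$, which is exactly the identity you need (equivalently, invariant divisors have ramification index $1$ and non-invariant ones index $p$ --- the standard fact underlying the definition of $\Delta_{X/\sF}$). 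With these two repairs, your coefficient bookkeeping in (3) and the resulting bounds in (1) (including the sharp $-1$ in the non-invariant case, from $\tfrac1p[-1+(p-1)(-1)]=-1$) are exactly right.
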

\begin{proof}
This is proved in \cite[Theorem 4.2.5]{Posva_Singularities_quotients_by_1_foliations}, under the additional assumption that $\Delta$ is effective. However, that assumption is not needed in the proof.
\end{proof}

Let us also recall the notion of family of $1$-foliations.
\begin{definition}[{\cite[\S 5.1]{Posva_Singularities_quotients_by_1_foliations}}]\label{def:family_foliations}
Let $X\to B$ be a flat morphism between normal varieties with geometrically normal fibers. A \emph{relative $1$-foliation} is a $1$-foliation on $X$ that is contained in the sub-module $T_{X/B}$ of $T_{X/k}$. If $X\to B$ is smooth, we say that a relative $1$-foliation $\sF\subset T_{X/B}$ is a \emph{family of $1$-foliations} if $T_{X/B}/\sF$ is flat over $B$ and for every $b\in B$ the sub-module $\sF\otimes \sO_{X_b}\hookrightarrow T_{X_b/k(b)}$ is a $1$-foliation.
\end{definition}

\begin{theorem}[{\cite[Proposition 5.2.4, Corollary 5.2.5]{Posva_Singularities_quotients_by_1_foliations}}]\label{thm:family_of_quotients}
Let $X\to B$ be a smooth morphism over a normal curve $B$, and $\sF\subset T_{X/B}$ be a family of $1$-foliations. Then:
	\begin{enumerate}
		\item for every $b\in B$ there is a natural morphism $\varphi_b\colon X_b/\sF|_{X_b}\to (X/\sF)_b$,
		\item $\varphi_b$ is an isomorphism if and only if $(X/\sF)_b$ is $S_2$, and
		\item if $\sF$ has only multiplicative singularities in a neighbourhood of $X_b$ then $\varphi_b$ is an isomorphism.
	\end{enumerate}
\end{theorem}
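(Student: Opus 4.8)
The plan is to construct $\varphi_b$ from the universal property of foliation quotients, and then to show that $\varphi_b$ is a finite universal homeomorphism which is an isomorphism in codimension one; parts (2) and (3) then follow from Serre's criterion. First I would set up part (1). Write $q\colon X\to Y:=X/\sF$ and $\sF_b:=\sF\otimes\sO_{X_b}$. By the flatness of $T_{X/B}/\sF$ over $B$, the sequence $0\to\sF\to T_{X/B}\to T_{X/B}/\sF\to 0$ stays exact after $\otimes_{\sO_B}k(b)$, so $\sF_b\hookrightarrow T_{X_b}$ is genuinely the $1$-foliation on $X_b$. Reducing the invariant inclusion $\sO_Y\hookrightarrow\sO_X$ modulo $\fm_b$ gives $\sO_{Y_b}=\sO_Y\otimes_{\sO_B}k(b)\to\sO_{X_b}$ landing in the $\sF_b$-invariants, i.e.\ the composite $X_b\to X\to Y$ factors through $Y_b$ and is $\sF_b$-invariant; the universal property of $q_b\colon X_b\to X_b/\sF_b$ then produces $\varphi_b$ with $\varphi_b\circ q_b=(X_b\to Y_b)$.

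Next I would record the basic properties. Since $\sO_Y\subset\sO_X$ is $\sO_B$-torsion-free, $Y\to B$ is flat, and $Y$ is normal (the $\sF$-invariants of a normal domain are integrally closed, as an element of $\Frac(\sO_Y)$ integral over $\sO_Y$ lies in $\sO_X$ by normality and is $\sF$-invariant, hence lies in $\sO_Y$). Thus $Y_b$ is a Cartier divisor in a normal variety, so it is $S_1$, while $X_b/\sF_b$ is normal because $X_b$ is smooth. As $q_b$ and $X_b\to Y_b$ are finite purely inseparable surjections, $\varphi_b$ is finite and a universal homeomorphism. The heart of the argument is that $\varphi_b$ is an isomorphism in codimension one. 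Let $X^\circ\subset X$ be the locus where $T_{X/B}/\sF$ is locally free; there $\sF$ is a subbundle, $X^\circ\to Y^\circ$ is smooth over $B$, the quotient commutes with base change, and $\varphi_b$ is an isomorphism over $Y^\circ\cap Y_b$. By flatness of $T_{X/B}/\sF$ and Nakayama, any $x\in X_b$ at which $T_{X_b}/\sF_b$ is locally free already lies in $X^\circ$; hence $X_b\setminus X^\circ\subseteq\Sing(\sF_b)$, which has $\codim\geq 2$ in $X_b$ since $\sF_b$ is a saturated subsheaf of $T_{X_b}$. So $\varphi_b$ is an isomorphism over a big open subset of $Y_b$; in particular $Y_b$ is generically reduced, hence reduced.

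With this in hand, part (2) is Serre's criterion. The morphism $\varphi_b$ is finite, an isomorphism in codimension one, and has normal (so $S_2$) source, whence it realizes $X_b/\sF_b$ as the $S_2$-ification of $Y_b$. Writing $j\colon U\hookrightarrow Y_b$ for the big open over which $\varphi_b$ is an isomorphism, both $\sO_{Y_b}$ and the $S_2$-sheaf $\varphi_{b*}\sO_{X_b/\sF_b}$ coincide with $j_*\sO_U$ away from codimension two; therefore $\sO_{Y_b}=\varphi_{b*}\sO_{X_b/\sF_b}$ if and only if $\sO_{Y_b}=j_*\sO_U$, i.e.\ if and only if $Y_b$ is $S_2$. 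This is precisely the claim that $\varphi_b$ is an isomorphism exactly when $Y_b$ is $S_2$. For part (3), I would use that a multiplicative singularity is, étale-locally up to saturation, the quotient by a $\mu_p$-action (the derivation $\partial$ with $\partial^{[p]}=u\partial$, $u$ a unit, integrates to a $\mu_p$-action). Since $\mu_p$ is linearly reductive, the invariant functor is exact and commutes with $-\otimes_{\sO_B}k(b)$, so near each singular point $(X/\sF)_b=\Spec(\sO_X^{\mu_p}\otimes k(b))=\Spec(\sO_{X_b}^{\mu_p})=X_b/\sF_b$; combined with the isomorphism over the regular locus of $\sF$, this gives that $\varphi_b$ is an isomorphism. (Equivalently, linear reductivity and the Hochster--Roberts theorem show $Y$ is Cohen--Macaulay near $Y_b$, so $Y_b$ is $S_2$ and part (2) applies.)

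The main obstacle is the codimension-one claim of the second paragraph: this is where the flatness hypothesis on $T_{X/B}/\sF$ is indispensable, both to guarantee that $\sF_b$ is an honest saturated $1$-foliation (so that $\Sing(\sF_b)$ has codimension $\geq 2$) and to propagate local freeness from the fiber to a neighbourhood in $X$, which is what makes base change of the quotient hold on the regular locus of $\sF$.
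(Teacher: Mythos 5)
This theorem is not proved in the paper at all: it is quoted verbatim from \cite[Proposition 5.2.4, Corollary 5.2.5]{Posva_Singularities_quotients_by_1_foliations}, so there is no in-paper argument to compare against. Judged on its own, your proposal follows what is surely the intended route (construct $\varphi_b$ by the universal property, show it is a finite universal homeomorphism that is an isomorphism over a big open subset, then settle (2) by Serre's condition and deduce (3) from it), and the supporting steps you give are sound: flatness of $T_{X/B}/\sF$ plus Nakayama to pass from fiberwise local freeness to local freeness on $X$, saturation of $\sF_b$ to get $\codim\Sing(\sF_b)\geq 2$, the $S_1$/generic-reducedness argument for $Y_b$, and for (3) either the $\mu_p$-eigenspace decomposition (which commutes with $-\otimes_{\sO_B}k(b)$ because $\mu_p$ is diagonalizable) or Hochster--Roberts giving Cohen--Macaulayness of $Y$ near $Y_b$, hence $S_2$ for the Cartier divisor $Y_0$.

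The one place where you assert rather than argue is also the technical heart: the claim that on the locus $X^\circ$ where $\sF$ is a subbundle of $T_{X/B}$, formation of the quotient commutes with the base change $-\otimes_{\sO_B}k(b)$. This does not follow formally from flatness of $T_{X/B}/\sF$ (taking kernels of $\sO_X\to\sO_X^{\oplus n}$ does not commute with a non-flat base change in general); it requires the relative local structure theorem for smooth $1$-foliations. Concretely, for rank one: near $x\in X^\circ_b$ pick $f$ with $\partial f$ a unit, replace $\partial$ by $(\partial f)^{-1}\partial$ so that $\partial f=1$ and $\partial^{[p]}=0$, and use the free decomposition $\sO_X=\bigoplus_{i=0}^{p-1}\sO_X^{\partial}\cdot f^i$, which visibly survives $-\otimes_{\sO_B}k(b)$ and identifies the fiber of the invariants with the invariants of the fiber (then induct on the rank). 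With that lemma supplied, your proof is complete; without it, the codimension-one isomorphism claim, and hence everything downstream, is unsupported. Since this is exactly the content that the cited Proposition 5.2.4 encapsulates, I would call your write-up a correct reconstruction modulo making that one standard step explicit.
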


Finally, the following lemma will be useful for computations related to base-changes:
\begin{lemma}\label{lemma:pullback_relative_foliations}
Let $f\colon X\to B$ be a smooth morphism, with $B$ a normal variety. Let $g\colon B'\to B$ be a flat morphism from a normal variety, inducing the cartesian diagram
		$$\begin{tikzcd}
		X'\arrow[d, "f'"]\arrow[r, "g'"] & X\arrow[d, "f"] \\
		B'\arrow[r, "g"] & B.
		\end{tikzcd}$$
Let $\sF\subset T_{X/B}$ be a relative $1$-foliation, and $\sF'=(g')^*\sF\hookrightarrow (g')^*T_{X/B}\cong T_{X'/B'}$. Then $\sF'$ is a relative $1$-foliation, and the natural morphism $X'\to (X/\sF)\times_BB'$ is isomorphic over $B'$ to the quotient $X'\to X'/\sF'$.
\end{lemma}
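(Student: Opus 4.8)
The plan is to reduce the statement to a local computation of invariant rings and to exploit the flatness of $g$ (hence of $g'$) at every step. Since $f$ is smooth, $\Omega_{X/B}$ is locally free and commutes with base change, so $(g')^*T_{X/B}\cong T_{X'/B'}$ as the lemma asserts; because $g'$ is flat, the functor $(g')^*$ is exact and carries the inclusion $\sF\hookrightarrow T_{X/B}$ to an inclusion $\sF'\hookrightarrow T_{X'/B'}\subseteq T_{X'/k}$, so $\sF'$ is automatically a relative subsheaf. To see that it is a $1$-foliation I would check the three defining properties. Involutivity and $p$-closedness are inherited: locally $\sF$ is generated by finitely many relative derivations, the Lie bracket and the restricted $p$-th power are computed from structure constants in $\sO_X$, and these formulas are preserved after applying the ring map $\sO_X\to\sO_{X'}$, since the derivations in $\sF$ are $\sO_B$-linear and extend $\sO_{B'}$-linearly.

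The one property requiring care is saturation. I would argue that $\sQ:=T_{X/B}/\sF$ is torsion-free on the integral normal variety $X$, so its only associated point is the generic point $\eta_X$. Applying the relative-assassin formula for the flat morphism $g'$, the associated points of $\sQ':=(g')^*\sQ=T_{X'/B'}/\sF'$ lie over $\eta_X$; since $g'$ is flat it lifts generizations, so every generic point of $X'$ already lies over $\eta_X$, and as $X'$ is normal (being smooth over the normal variety $B'$) these are exactly its associated points. Hence $\sQ'$ is torsion-free and $\sF'$ is saturated in $T_{X'/B'}$. Because $T_{X'/B'}$ is itself saturated in $T_{X'/k}$ (the quotient embeds into the torsion-free sheaf $(f')^*T_{B'/k}$), transitivity of saturation shows $\sF'$ is saturated in $T_{X'/k}$. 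This completes the verification that $\sF'$ is a relative $1$-foliation.

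For the identification of the quotient I would work affine-locally, writing $B=\Spec A$, $B'=\Spec A'$ with $A\to A'$ flat, $\sO_X=S$ and $\sO_{X/\sF}=R=S^{\sF}$, the ring of $\sF$-invariants. The key observation is that $R$ is the kernel of the $A$-linear map $\phi\colon S\to\Hom_S(\sF,S)$, $\phi(s)(\partial)=\partial(s)$ (each $\phi(s)$ is $S$-linear in $\partial$, and $\phi$ is $A$-linear because $\sF\subseteq T_{X/B}$). Tensoring the left-exact sequence $0\to R\to S\xrightarrow{\phi}\Hom_S(\sF,S)$ with $A'$ over $A$ preserves exactness by flatness. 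Since $\sF$ is finitely presented and $A\to A'$ is flat, $\Hom$ commutes with the base change, giving $\Hom_S(\sF,S)\otimes_A A'\cong\Hom_{S'}(\sF',S')$ with $S'=S\otimes_A A'$, and under this isomorphism $\phi\otimes\id$ is exactly the map $\phi'$ defining the $\sF'$-invariants. Therefore $(S')^{\sF'}=\ker\phi'=R\otimes_A A'$, which is precisely the structure ring of $(X/\sF)\times_B B'$ over the chart. Globalizing, the natural morphism $X'\to(X/\sF)\times_B B'$ induces an isomorphism on structure sheaves, and hence is the quotient $X'\to X'/\sF'$.

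I expect the saturation step to be the only genuine obstacle: the other properties are formal consequences of the exactness of flat pullback, and the quotient identification is clean once one records that taking $\sF$-invariants is a kernel and that $\Hom$ commutes with flat base change for the finitely presented sheaf $\sF$. If one prefers to avoid the assassin formula, an alternative is to observe that $\sF'$ agrees with its saturation in codimension one on the normal scheme $X'$, which can be checked after restricting to the generic points of the fibers, where $g'$ is, up to localization, a base change along a field extension.
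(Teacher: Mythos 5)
Your proposal is correct, and its engine is the same as the paper's: affine-locally, the invariant ring is realized as the kernel of an $\sO_B$-linear map out of $\sO_X$, and flatness of $B'\to B$ preserves that kernel. The paper implements this with chosen generators, writing $R^F=\ker\left((\partial_1,\dots,\partial_n)\colon R\to R^{\oplus n}\right)$, which makes the base-change step immediate and avoids your appeal to compatibility of $\Hom$ with flat base change for finitely presented modules; the two packagings are interchangeable. Where you genuinely diverge from the paper is in which foliation axioms receive the effort. The paper concentrates on closure of the pulled-back module under Lie brackets and $p$-th powers, via Hochschild's and Jacobson's formulas; your ``structure constants'' remark only treats the generators $\mathfrak{g}'(\partial_i)$, and passing from closure on generators to closure of the full $\sO_{X'}$-module is precisely what those two formulas accomplish, so this step should be cited rather than treated as automatic. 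Conversely, you verify saturation, which the paper's proof does not address at all; since a $1$-foliation is by definition saturated, your write-up fills a real omission.

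Two points in your saturation argument need tightening. First, as written the conclusion does not follow from the two facts you establish (that the associated points of $\sQ'$ and the generic points of $X'$ both lie over $\eta_X$): you need the precise relative assassin formula, which identifies $\operatorname{Ass}(\sQ')$ with the associated points of the scheme-theoretic fiber $(g')^{-1}(\eta_X)$, together with reducedness of that fiber (true because $f$ smooth makes $\kappa(\eta_X)$ separable over $\kappa(\eta_B)$) and a going-down argument showing its generic points are generic in $X'$. A shorter route: a torsion-free coherent sheaf on the integral scheme $X$ embeds locally into $\sO_X^{\oplus r}$, flatness of $g'$ keeps $\sQ'$ inside $\sO_{X'}^{\oplus r}$, and a coherent subsheaf of a free sheaf on the reduced scheme $X'$ has only generic associated points. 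Second, your closing alternative --- that it suffices for $\sF'$ to agree with its saturation in codimension one --- is not valid as stated: agreement in codimension one does not imply equality (compare $\mathfrak{m}_x\cdot T$ with $T$ for a codimension-two point $x$) unless you first know $\sF'$ is reflexive, which would itself require an argument.
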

\begin{proof}
This can be checked Zariski-locally on $X, B$ and $B'$, so we may assume that $X=\Spec(R)$, $B=\Spec(A)$, $B'=\Spec(A')$ and that $\sF=\tilde{F}$ is generated by $\partial_1,\dots,\partial_n\in \Der_A(R)$. Let $R'=R\otimes_A A'$. We have a natural map
		$$\mathfrak{g}'\colon \Der_R(A)\to \Der_{A'}(R'), \quad
		\mathfrak{g}'(\psi)\colon r\otimes a'\mapsto \psi(r)\otimes a',$$
and the $A'$-sub-module $F'$ of $\Der_{A'}(R')$ generated by $\mathfrak{g}'(F)$ sheafifies into $\sF'$. We check below that $F'$ is closed under Lie brackets and $p$-powers. As the $\partial_i'=\mathfrak{g}'(\partial_i)$ generate $F'$ we see that
		$$(R')^{F'}=
		\ker\left((\partial_1',\dots,\partial_n')\colon
		R'\to (R')^{\oplus n}\right).$$
Similarly we have
		$$R^F=\ker\left((\partial_1,\dots,\partial_n)\colon
		R\to R^{\oplus n}\right).$$
Since $A'$ is flat over $A$ we therefore obtain that $R^F\otimes_A A'=(R')^{F'}$, and the lemma follows.

It remains to check that $F'$ is closed under brackets and $p$-powers.
	Let $\xi,\psi\in \mathfrak{g}'(F)$ and $r,s\in R'$. As $\mathfrak{g}'$ commutes with Lie brackets we have
	$[r\xi, s\psi]=rs\cdot [\xi,\psi]\in F'$. 
	By Hochschild's formula \cite[(2.3.0.a)]{Posva_Singularities_quotients_by_1_foliations} we see that $(r\xi)^{[p]}\in F'$. Finally, a formula of Jacobson \cite[p. 209]{Jacobson_Abstract_derivations_and_Lie_algebras} shows that $(r\xi+s\psi)^{[p]}$ and $(r\xi)^{[p]}+(s\psi)^{[p]}$ differ by an $R'$-linear combination of multi-fold Lie brackets of $r\xi$ and $s\psi$. Thus $(r\xi+s\psi)^{[p]}\in F'$, and we are done. 
\end{proof}

\section{Non-$S_3$ isolated MMP singularities}
Consider the affine space $\bA^n$ over the field $k$ with coordinates $x_1,\dots,x_n$ where $n\geq 2$. We introduce the derivation 
		\begin{equation*}
		\partial = \sum_{i=1}^n x_i^2\partial_{x_i} \quad
		\text{on } \bA^n.
		\end{equation*}
First we observe that:
\begin{claim}\label{claim:p-closed}
$\partial^{[p]}=0$.
\end{claim}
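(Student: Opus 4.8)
The plan is to use the fact that $\partial^{[p]}$ is the $p$-fold composite $\partial^p$, which in characteristic $p$ is again a $k$-derivation of the polynomial algebra $k[x_1,\dots,x_n]$. Since a derivation of a polynomial algebra is determined by its values on the coordinates, it suffices to check that $\partial^p(x_j)=0$ for every $j$.

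First I would record the key structural observation that the coordinates are \emph{decoupled}: the summand $x_i^2\partial_{x_i}$ with $i\neq j$ annihilates every polynomial in $x_j$ alone, so when one iterates $\partial$ starting from $x_j$, only the $j$-th summand $x_j^2\partial_{x_j}$ ever contributes. This reduces the computation to a single-variable iteration. Concretely, I would prove by induction on $m$ that $\partial^m(x_j)=m!\,x_j^{m+1}$: the base case $m=0$ is clear, and assuming $\partial^m(x_j)=m!\,x_j^{m+1}$, the fact that this monomial depends only on $x_j$ gives $\partial^{m+1}(x_j)=x_j^2\partial_{x_j}\bigl(m!\,x_j^{m+1}\bigr)=(m+1)!\,x_j^{m+2}$.

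Taking $m=p$ then yields $\partial^p(x_j)=p!\,x_j^{p+1}$, which vanishes in characteristic $p$ since $p\mid p!$. As this holds for every $j$, the derivation $\partial^{[p]}=\partial^p$ kills all the generators $x_1,\dots,x_n$ of $k[x_1,\dots,x_n]$, and is therefore identically zero. (Alternatively, once one knows $\partial$ is $p$-closed, the same computation with $a=x_1$ feeds directly into \autoref{lemma:p_power_of_derivation} with $h=0$.)

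The argument is entirely elementary, so no genuine obstacle arises; the only points that require a word of care are the reduction to the single-variable iteration via the decoupling of the coordinates, together with the standard facts that $\partial^{[p]}$ coincides with the $p$-fold composite $\partial^p$ and that this composite is again a $k$-derivation.
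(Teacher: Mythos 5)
Your proof is correct. It rests on the same one-variable computation as the paper's (namely $(x_j^2\partial_{x_j})^m(x_j)=m!\,x_j^{m+1}$, which vanishes at $m=p$ because $p\mid p!$), but you globalize it differently. The paper first uses commutativity of the summands to write $\partial^{[p]}=\sum_i (x_i^2\partial_{x_i})^{[p]}$, then invokes Hochschild's formula to see that each scaling $x_i^2\partial_{x_i}$ is $p$-closed, and finally applies \autoref{lemma:p_power_of_derivation} (with $h=0$) to kill each summand. You bypass both of these tools: you use only that $\partial^p=\partial^{[p]}$ is again a $k$-derivation in characteristic $p$ and that a derivation of $k[x_1,\dots,x_n]$ is determined by its values on the coordinates, so it suffices to check $\partial^p(x_j)=0$ via your decoupling induction. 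Your route is more elementary and entirely self-contained; the cost is that it exploits the very special ``diagonal'' shape of $\partial$, whereas the paper's pattern ($p$-closedness of a scaling plus \autoref{lemma:p_power_of_derivation}) is exactly the template reused later for derivations that are not diagonal — e.g.\ to show $\psi^{[p]}=\psi$ for $\psi=u_1\partial_{u_1}+\sum_{i\geq 2}(-u_i+u_i^2)\partial_{u_i}$ on the blow-up charts, where a direct evaluation of the $p$-fold composite would be much messier. One small caveat: your closing parenthetical (applying \autoref{lemma:p_power_of_derivation} directly to $\partial$ with $a=x_1$) requires knowing beforehand that $\partial$ is $p$-closed, which is not free — you flag this correctly, and your main argument does not depend on it.
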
		
\begin{proof}\renewcommand{\qedsymbol}{$\lozenge$}
Since the summands $x_i^2\partial_{x_i}$ commute with each other, we have
		$$\partial^{[p]}=\sum_{i=1}^n \left( x_i^2\partial_{x_i}\right)^{[p]}.$$
Since $\partial_{x_i}$ is $p$-closed, by Hochschild's formula \cite[(2.3.0.a)]{Posva_Singularities_quotients_by_1_foliations} the scaling $x_i^2\partial_{x_i}$ is also $p$-closed. It is easily seen that $(x_i^2\partial_{x_i})^{[p]}(x_i)=0$, and by \autoref{lemma:p_power_of_derivation} it follows that $(x_i^2\partial_{x_i})^{[p]}=0$. Therefore $\partial^{[p]}=0$.
\end{proof}		
		
In particular $\sF=\sO_{\bA^n}\cdot \partial$ is a $1$-foliation on $\bA^n$.

\begin{theorem}\label{thm:non_S3_qt}
With the notations as above, let $Y=\bA^n/\sF$ (with $n\geq 2$). Then:
	\begin{enumerate}
		\item $Y$ is a normal $\bQ$-factorial variety, with a unique singular closed point $\bold{0}$;
		\item $\sO_{Y,\bold{0}}$ is not $S_3$ if $n\geq 3$;
		\item It holds that:
			\begin{itemize}
				\item If $n\leq p-2$, $Y$ is not lc.
				\item If $n\geq p-1$, $Y$ is lc;
				\item If $n\geq p$, $Y$ is canonical;
				\item If $n\geq p+1$, $Y$ is terminal.
			\end{itemize}
	\end{enumerate}
\end{theorem}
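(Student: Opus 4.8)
The plan is to reduce every discrepancy computation on $Y$ to the single blow-up of the origin, where the foliation becomes log canonical, and then feed the result into \autoref{thm:sing_qt}. Let $b\colon X=\Bl_{\mathbf{0}}\bA^n\to\bA^n$ be the blow-up of the origin, with exceptional divisor $E_0\cong\bP^{n-1}$, and set $\sF_X=b^*\sF$. First I would compute the pulled-back foliation in the $x_1$-chart of \autoref{example:pullback_smooth_blow-up} (taking $r=n$): one finds $b^*\partial=y_1\widetilde{\partial}$ with $\widetilde{\partial}=y_1\partial_{y_1}+\sum_{i=2}^n y_i(y_i-1)\partial_{y_i}$, so that $\sF_X$ is generated there by the saturated field $\widetilde{\partial}$. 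The crucial point is that $\widetilde{\partial}^{[p]}=\widetilde{\partial}$: the summands commute, $(y_1\partial_{y_1})^{[p]}=y_1\partial_{y_1}$ is the Euler identity, and $(y_i(y_i-1)\partial_{y_i})^{[p]}=y_i(y_i-1)\partial_{y_i}$ — which I would verify either as in \autoref{claim:p-closed} via \autoref{lemma:p_power_of_derivation}, or by linearising $y_i(y_i-1)\partial_{y_i}$ to $-s\partial_s$ through the change of variable $s=y_i/(1-y_i)$. By symmetry across the charts this holds everywhere, so $\sF_X$ has only multiplicative singularities and is log canonical by \autoref{prop:mult_is_lc}.

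Next I would pin down the discrepancy of $E_0$. As $X$ is the blow-up of a point, $a(E_0;\bA^n)=n-1$; and since $b^*\partial=y_1\widetilde{\partial}$ vanishes to order one along $E_0$ while $\sF\cong\sO_{\bA^n}$ is saturated, we get $K_{\sF_X}=-E_0$, i.e.\ $a(E_0;\sF)=-1$. Moreover $E_0$ is $\sF_X$-invariant, since $\widetilde{\partial}(y_1)=y_1$. Writing $F_0$ for the image of $E_0$ under $q_X\colon X\to Z:=X/\sF_X$, \autoref{thm:sing_qt}(3) (invariant case) gives
$$a(F_0;Y)=a(E_0;\bA^n)+(p-1)\,a(E_0;\sF)=(n-1)-(p-1)=n-p.$$
In particular $a(F_0;Y)=n-p\leq-2<-1$ when $n\leq p-2$, so $Y$ is not lc; this settles the first bullet.

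For the remaining bullets I would work through the birational morphism $\fb\colon Z\to Y$. As $F_0$ is the only $\fb$-exceptional divisor and $a(F_0;Y)=n-p$, the discrepancies over $Y$ coincide with those over the sub-pair $(Z,(p-n)F_0)$; and because $E_0$ is invariant, this pair is the $\sF_X$-quotient of $(X,(p-n)E_0)$ in the sense of \autoref{prop:log_adjunction}. The log smooth pair $(X,(p-n)E_0)$ has its single boundary coefficient equal to $p-n$, hence is sub-lc exactly when $p-n\leq1$, i.e.\ $n\geq p-1$; in that range \autoref{thm:sing_qt}(1) applied to the lc foliation $\sF_X$ shows that $(Z,(p-n)F_0)$, and therefore $Y$, is lc. This gives the second bullet.

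For the canonical ($n\geq p$) and terminal ($n\geq p+1$) statements I would bound $a(F;Y)$ for an arbitrary exceptional $F$ over $Y$ by applying \autoref{thm:sing_qt}(3) to the quotient $(X,(p-n)E_0)\to(Z,(p-n)F_0)$. If $E$ over $X$ induces $F$, then $a(E;X,(p-n)E_0)=a(E;X)+(n-p)\,m_E$ with $m_E=\ord_E(\sigma^*E_0)\geq0$, while $a(E;\sF_X)\geq0$ (resp.\ $\geq-1$) for $E$ invariant (resp.\ non-invariant) because $\sF_X$ is lc. For invariant $E$ the expression $a(F;Y)=a(E;X)+(n-p)m_E+(p-1)a(E;\sF_X)$ is a sum of non-negative terms when $n\geq p$, and is strictly positive unless $E=E_0$ (where it equals $n-p$); so invariant divisors cause no trouble. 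The main obstacle is the non-invariant (dicritical) divisors, for which $a(F;Y)=\tfrac1p\big[a(E;X)+(n-p)m_E+(p-1)a(E;\sF_X)\big]$ and $a(E;\sF_X)$ can equal $-1$. Here I would use that $\Sing(\sF_X)$ is the finite set $\{y_1=0,\ y_i\in\{0,1\}\}$, whose points have diagonal linear part with eigenvalues $\pm1$, together with the fact that a rank-one foliation yields a dicritical exceptional only over a radial singularity; this forces every non-invariant divisor to have a \emph{closed point} as center, whence $a(E;X)\geq n-1$ by \autoref{lemma:discrepancy_over_reg}. Then $a(F;Y)\geq\tfrac1p\big[(n-1)-(p-1)\big]=\tfrac{n-p}{p}$, which is $\geq0$ for $n\geq p$ and $>0$ for $n\geq p+1$, finishing the canonical and terminal cases. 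I expect the delicate point to be precisely this claim that no non-invariant divisor has a positive-dimensional center; I would check it by analysing the blow-ups of the finitely many radial singular points, where the induced foliation is again of the quadratic type treated in \autoref{claim:p-closed}.
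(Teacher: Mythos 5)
Most of your proposal coincides with the paper's proof: the blow-up of the origin, the identity $\psi^{[p]}=\psi$ giving log canonicity of $b^*\sF$ via \autoref{prop:mult_is_lc}, the values $a(E_0;\sF)=-1$ and $a(F_0;Y)=n-p$, and the non-lc and lc cases run exactly as in the paper (reduction to the sub-pair $(Z,(p-n)F_0)$ and \autoref{thm:sing_qt}). Note also that you do not address parts (1) and (2) of the statement at all (normality, $\bQ$-factoriality, the unique singular point, and the failure of $S_3$); in the paper these are quick consequences of the cited results on $1$-foliation quotients, but they are part of the theorem.

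The genuine gap is the claim you flag yourself in the canonical/terminal case: \emph{``a rank-one foliation yields a dicritical exceptional only over a radial singularity; this forces every non-invariant divisor to have a closed point as center.''} This is false as a statement about (lc) $1$-foliations in characteristic $p$, and no analysis of blow-ups at the finitely many singular points can establish it, since the claim quantifies over all divisors over $X$. Two counterexamples. First, non-invariant divisors can be extracted over non-radial multiplicative points: in \autoref{prop:not_1_Gor} the paper itself performs a weighted blow-up with weights $(1,p-1,\dots,p-1)$ at a point whose linear part has eigenvalues $(1,-1,\dots,-1)$ and obtains a \emph{non-invariant} exceptional divisor --- resonance in characteristic $p$ is only a condition mod $p$, so ``radial'' is not a robust notion here. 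Second, and fatally for your deduction, non-invariant divisors can have positive-dimensional centers along which the foliation is \emph{regular}: take $\sF=\sO_{\bA^3}\cdot\partial_y$ on $\bA^3_{x,y,w}$ and blow up the ideal $(x,y^p)$; on the chart where $x=x'y^p$ the pullback of $\partial_y$ is again $\partial_y$, the exceptional divisor is $\{y=0\}$, which is non-invariant since $\partial_y(y)=1$, and its center is the line $\{x=y=0\}$. So ``non-invariant $\Rightarrow$ zero-dimensional center'' fails, and with it your appeal to \autoref{lemma:discrepancy_over_reg} to get $a(E;X)\geq n-1$ for all non-invariant $E$. If a non-invariant divisor with curve center could also have $a(E;\sF_X)=-1$, your estimate would only give $\tfrac1p\left[(n-2)-(p-1)\right]<0$ when $n=p$, and the canonical case would collapse.

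The repair does not need your claim at all: the correct dichotomy is on the \emph{center}, not on invariance. If $c_X(E)\not\subset\Sing(b^*\sF)$, then $b^*\sF$ is regular at the generic point of the center, and regular $1$-foliations are canonical (\cite[Lemma 3.0.3]{Posva_Singularities_quotients_by_1_foliations}), so $a(E;b^*\sF)\geq 0$ \emph{whether or not $E$ is invariant} (this is what makes my $(x,y^p)$ example harmless); combined with $a(E;X,(p-n)E_0)\geq 1$ for $E$ exceptional over $X$, this gives $a(F;Y)>0$. If instead $c_X(E)\subset\Sing(b^*\sF)$, then the center is automatically a closed point because the singular locus is finite, and only then do you use $a(E;X)\geq n-1$, $a(E;b^*\sF)\geq -1$ and $\mult_E(f^*E_0)\geq 1$ to get $a(F;Y)\geq \delta_E\cdot 2(n-p)$. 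This is exactly the paper's argument; invariance of $E$ enters only through the harmless factor $\delta_E\in\{1,1/p\}$.
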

\begin{proof}
By construction $Y$ is normal and $\bQ$-factorial: in particular $K_Y$ is $\bQ$-Cartier. Since $\Sing(\sF)$ is the origin $0\in\bA^n$, we see that $Y$ is regular away from the image $\bold{0}$ of the origin \cite[Lemma 2.5.10]{Posva_Singularities_quotients_by_1_foliations}. As the ideal generated by the image of $\partial$ in the local ring $\sO_{\bA^n,0}$ is $\fm_{\bA^n,0}$-primary, by \autoref{claim:p-closed} and \cite[Remark 2.5.4]{Posva_Singularities_quotients_by_1_foliations} we obtain that $\sO_{Y,\bold{0}}=\sO_{\bA^n,0}^\partial$ is not $S_3$.

It remains to analyse the singularity of $(\bold{0}\in Y)$ through the glasses of the MMP. To begin with, we blow-up the origin of $\bA^n$ to simplify the singularities of $\sF$:
		$$b\colon X=\Bl_0\bA^n\longrightarrow \bA^n.$$
Consider the $x_1$-patch $U_1$ of this blow-up:
		$$b^*\colon k[x_1,\dots,x_n]\to k[u_1,\dots,u_n],\quad
		x_1\mapsto u_1, \ x_i\mapsto u_1u_i \ (i>1).$$
Using the blow-up computations of \autoref{example:pullback_smooth_blow-up}, we see that on this chart we have
		\begin{eqnarray*}
		b^*\partial|_{U_1} & = &
		u_1 \cdot \left(u_1\partial_{u_1}-\sum_{i=2}^nu_i\partial_{u_i}\right)
		+ \sum_{i=2}^n u_1^2u_i^2 \cdot \frac{1}{u_1}\partial_{u_i} \\
		& = & u_1 \cdot \underbrace{\left[
		u_1\partial_{u_1} + \sum_{i=2}^n (-u_i+u_i^2)\partial_{u_i}
		\right]}_{\psi}
		\end{eqnarray*}
Since $b^*\partial$ is $p$-closed, its scaling $\psi$ is also $p$-closed. As $\psi(u_1)=u_1$ we obtain by \autoref{lemma:p_power_of_derivation} that $\psi^{[p]}=\psi$. Therefore by \autoref{prop:mult_is_lc} we obtain that
		\begin{equation*}
		\sO_{U_1}\cdot \psi =(b^*\sF)|_{U_1} \quad \text{is an lc 1-foliation.}
		\end{equation*}
The situation is similar on the other charts, and therefore we find that $b^*\sF$ is an lc $1$-foliation. As $\psi(u_1)\in (u_1)$ we also see that the exceptional divisor $E\subset X$ of $b$ is $b^*\sF$-invariant. Finally, as $b^*\partial|_{U_1}=u_1\cdot \psi$ we see that $a(E;\sF)=-1$.

Now let $Z=X/b^*\sF$ and consider the induced commutative diagram
		\begin{equation}\label{eqn:pseudo_resolution}
		\begin{tikzcd}
		X\arrow[d, "b"] \arrow[r, "q'"] & Z\arrow[d, "\fb"]\\
		\bA^n\arrow[r, "q"] & Y
		\end{tikzcd}
		\end{equation}
where the horizontal arrows are the quotient morphisms and where $\fb\colon Z\to Y$ is birational. Let $F\subset Z$ be the unique prime $\fb$-exceptional divisor. By \autoref{thm:sing_qt} we have
		$$a(F;Y)=a(E;\bA^n)+(p-1)\cdot a(E; \sF)
		= (n-1)+(p-1)(-1)
		=n-p.$$
If $n\leq p-2$ then $a(F;Y)\leq -2$ and $Y$ is not lc. From now on assume that $n\geq p-1$. We have:
		\begin{equation*}
		K_Z+(p-n)F=\fb^*K_Y.
		\end{equation*}
This crepant relation shows that $Y$ is lc if and only if $\discrep(Z,(p-n)F)\geq -1$. Since $b^*\sF$ is lc, \autoref{thm:sing_qt} shows that $\discrep(Z,(p-n)F)\geq -1$ as soon as $\discrep(X,(p-n)E)\geq -1$. As $(X,E)$ is log smooth, this holds when $p-n\leq 1$ by \cite[Corollary 2.11]{Kollar_Singularities_of_the_minimal_model_program}. This settles the lc case of the theorem.

It remains to study when $Y$ is canonical (resp. terminal): for this we assume that $n\geq p$. Consider an exceptional divisor $F'$ over $Y$, appearing on a birational model $Z'\to Y$. Replacing $Z'$ by some blow-up, we  may assume that $Z'\to Y$ factors through a birational morphism $\ff\colon Z'\to Z$. Let $f\colon X'\to X$ be the normalization of $\ff$ in $K(X)$, and $E'\subset X'$ be the prime divisor dominating $F'\subset Z'$. If $f(E')\not\subset E$ then $c_{F'}(Y)\neq \bold{0}$, and therefore $a(F';Y)\geq 1$ as $Y$ is regular away from $\bold{0}$. Thus we may assume that $f(E')\subset E$. By \autoref{thm:sing_qt} we have
		\begin{eqnarray*}
		a(F';Y) &=& a(F';Z,(p-n)F) \\
		& = & \delta_{E'} \cdot \left[
			a(E'; X, (p-n)E) + (p-1)\cdot a(E';b^*\sF)\right]
		\end{eqnarray*}
where $\delta_{E'}\in \{1, p^{-1}\}$. We must find conditions that guarantee that $a(F';Y)\geq 0$ (resp. $>0$), hence the factor $\delta_{E'}$ will not play any role. As $(X,E)$ is log smooth and $p-n\leq 0$, we have $a(E';X,(p-n)E)\geq 1$. Since regular $1$-foliations are canonical \cite[Lemma 3.0.3]{Posva_Singularities_quotients_by_1_foliations}, it follows that $a(F';Y)>0$ unless the center of $E'$ is contained in the singular locus of $b^*\sF$: so from now on we assume that $c_X(E')$ is contained in $\Sing(b^*\sF)$. It is easily seen, using the computations above, that the singular locus of $b^*\sF$ is a finite collection of isolated points. Therefore, by \autoref{lemma:discrepancy_over_reg} we see that $a(E';X)\geq n-1$. Moreover, as $b^*\sF$ is lc we have $a(E';b^*\sF)\geq -1$. Putting everything together, we get
		\begin{eqnarray*}
			\frac{1}{\delta_{E'}}a(F';Y)
			& = &
			a(E';X)+(n-p)\cdot \mult_{E'}(f^*E) +(p-1)\cdot a(E';b^*\sF) \\
			&\geq & (n-1) + (n-p) + (p-1)\cdot (-1) \\
			& = & 2(n-p).
		\end{eqnarray*}
	So if $n\geq p$ (resp. if $n>p$) it holds that $a(F';Y)\geq 0$ (resp. $a(F';Y)>0$). This completes the proof.
\end{proof}

\begin{remarks}
\begin{enumerate}
	\item The case $n=3\geq p$ gives a canonical non-Cohen--Macaulay isolated threefold singularity. It was shown in \cite{Arvidsson_Bernasconi_Lacini_KVV_for_log_dP_surfaces_in_pos_char} that klt threefold singularities (with perfect residue fields) are Cohen--Macaulay in characteristic $p>5$. This is optimal, as there also exists non-Cohen--Macaulay terminal threefold singularities in characteristic $5$ \cite{Totaro_Terminal_non_CM_3folds}.
	\item In characteristic $0$, strictly lc singularities may not be $S_3$, but this can only happen if they are log canonical centers \cite[Corollary 7.21]{Kollar_Singularities_of_the_minimal_model_program}. If $n=p-1$ in \autoref{thm:non_S3_qt} then $\bold{0}$ is an lc center of $Y$, as the proof shows.
\end{enumerate}
\end{remarks}

We end this section with some further remarks on the singularities $(\bold{0}\in Y)$ constructed in \autoref{thm:non_S3_qt}. Unless we specify otherwise, there will be no constraint on the characteristic $p$ nor on $n=\dim Y$.

\begin{proposition}\label{prop:not_1_Gor}
If $p>2$, then the singularity $(\bold{0}\in Y)$ is never $1$-Gorenstein (i.e.\ $K_Y$ is never Cartier).
\end{proposition}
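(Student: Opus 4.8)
The plan is to produce a single divisor $G$ over $Y$ with center $\mathbf 0$ whose discrepancy $a(G;Y)$ is not an integer. Since $Y$ is $\bQ$-factorial, $K_Y$ is automatically $\bQ$-Cartier; and if $K_Y$ were Cartier, then for every proper birational $\pi\colon Y'\to Y$ the pullback $\pi^*K_Y$ would be an integral Cartier divisor, so that every $a(G;Y)=\ord_G(K_{Y'}-\pi^*K_Y)$ would lie in $\bZ$. Hence exhibiting one non-integral discrepancy suffices. I would work inside the diagram \eqref{eqn:pseudo_resolution} and use the crepant relation $K_Z+(p-n)F=\fb^*K_Y$ from the proof of \autoref{thm:non_S3_qt}: by \cite[Lemma 2.5]{Kollar_Singularities_of_the_minimal_model_program}, computing discrepancies over $Y$ is the same as computing them over the sub-pair $(Z,(p-n)F)$. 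The divisor $G$ I want will sit over the image $Q\in F$ of the multiplicative singular point $P=(0,\dots,0)$ of $b^*\sF$ in the $u_1$-chart of $X$; its center on $Y$ is $\fb(Q)=\mathbf 0$, as required.

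Next I would pin down the local model of $Z$ at $Q$. At $P$ the foliation $b^*\sF$ is generated by $\psi$, whose linear part is $\operatorname{diag}(1,-1,\dots,-1)$ with all eigenvalues in $\bF_p^\times$. Because $\psi^{[p]}=\psi$, the associated $\mu_p$-action is linearly reductive, hence linearizable at the fixed point, so $\widehat{\sO}_{Z,Q}$ is the completion of the cyclic quotient singularity $\tfrac1p(1,p-1,\dots,p-1)$, and $F$ corresponds to the toric divisor $D_1$ of the ray $e_1$ (the image of the invariant divisor $E=\{u_1=0\}$). Realizing $Z$ near $Q$ as the affine toric variety of the orthant $\sigma=\operatorname{cone}(e_1,\dots,e_n)$ in the lattice $N=\bZ^n+\bZ\cdot v$ with $v=\tfrac1p(1,p-1,\dots,p-1)$, I take $G:=D_v$, the exceptional divisor of the ray through $v$. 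Standard toric formulas give $a(D_v;Z)=\sum_j v_j-1=\tfrac{(n-2)(p-1)}{p}$; and since $pF$ is Cartier with equation $u_1^p$, the order of $u_1^p$ along $v$ is $p\cdot v_1=1$, so $\mult_{D_v}(h^*F)=\tfrac1p$ for the blow-up $h\colon Z'\to Z$ extracting $D_v$.

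Combining these through the crepant relation yields
$$a(G;Y)=a(D_v;Z)-(p-n)\,\mult_{D_v}(h^*F)=\frac{(n-2)(p-1)}{p}-\frac{p-n}{p}=n-3+\frac{2}{p},$$
which is not an integer precisely when $p>2$ (for $p=2$ it equals $n-2\in\bZ$, consistent with the hypothesis of the proposition). This furnishes the required non-integral discrepancy, and therefore $K_Y$ is not Cartier.

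The main obstacle is the second step: rigorously identifying $\widehat{\sO}_{Z,Q}$ with the cyclic quotient $\tfrac1p(1,p-1,\dots,p-1)$, that is, passing from the multiplicative foliation singularity of $b^*\sF$ to a genuinely diagonal $\mu_p$-quotient. This requires the linearizability of the $\mu_p$-action generated by $\psi$ (the higher-order terms $u_i^2$ must be shown removable after a formal change of coordinates) and the compatibility between the foliation quotient and the group quotient. Once the toric model is secured, the remaining computations are routine. As a consistency check, the same value can be recovered directly from \autoref{thm:sing_qt} applied to the \emph{non}-invariant divisor $E'$ over $X$ obtained from the weighted blow-up with weights $(1,p-1,\dots,p-1)$: there $a(E';X)=(n-1)(p-1)$ and $\mult_{E'}(E)=1$, and matching the two computations forces $a(E';b^*\sF)=-1$, the extremal log canonical value for a non-invariant divisor over the lc foliation $b^*\sF$.
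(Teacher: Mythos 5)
Your proposal is correct --- it produces the same exceptional divisor and the same non-integral discrepancy $n-3+\tfrac{2}{p}$ as the paper --- but it gets there by a genuinely different mechanism. The paper never descends to $Z$ to compute: it performs your weighted blow-up with weights $(1,p-1,\dots,p-1)$ \emph{upstairs} on the chart $U_1\subset X$, reads off $a(E';X)=(n-1)(p-1)$, the non-invariance of $E'$, and $a(E';b^*\sF)=-1$ directly from the explicit expression of $\psi$ in the new coordinates, and then feeds these numbers into \autoref{thm:sing_qt}, where the non-invariance supplies the crucial factor $\tfrac{1}{p}$. (Your closing ``consistency check'' is, almost verbatim, the paper's actual proof, except that the paper computes $a(E';b^*\sF)=-1$ directly rather than inferring it by matching the two computations.) Your route instead identifies $\widehat{\sO}_{Z,Q}$ with the toric $\mu_p$-quotient $\tfrac{1}{p}(1,p-1,\dots,p-1)$ and runs a standard toric discrepancy computation there. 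What this buys is transparency: the fractional part $\tfrac{2}{p}$ is visibly forced by the index-$p$ toric singularity, with no foliation machinery in the final step, and the $p=2$ degeneration to the Gorenstein case is immediate. What it costs is the linearization step you rightly flag as the main obstacle.

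That flagged step is a genuine dependency, but not a genuine gap: it is exactly \cite[Proposition 4.1.1]{Posva_Singularities_quotients_by_1_foliations}, which this paper itself invokes twice --- in the proof of \autoref{prop:rational_up_to_Frob}, to say that $Z$ has only isolated toroidal $\mu_p$-quotient singularities, and in \autoref{rmk:can_we_get_smth_family?}, where the formal-local diagonal form of a multiplicative derivation is written out explicitly. Two smaller points should be made explicit to finish your argument: (i) since the singularity $(Q\in Z)$ is isolated and formally toric, Artin approximation gives a common \'{e}tale neighbourhood of $(Q \in Z)$ and of the toric model, and discrepancies (also of sub-pairs with $\bQ$-Cartier data) are insensitive to \'{e}tale localization, so the toric divisor $D_v$ honestly defines a divisor over $Z$ with the stated discrepancy $\tfrac{(n-2)(p-1)}{p}$ and multiplicity $\mult_{D_v}(h^*F)=\tfrac{1}{p}$; (ii) your identification of the local equation of $pF$ as $u_1^p$ uses that $E$ is $b^*\sF$-invariant, so that $(q')^*F=E$ rather than $pE$ --- this is the same invariance statement the paper proves in \autoref{thm:non_S3_qt}, just used in a different role. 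With those two remarks and the citation, your proof is complete; the missing ingredient is a reference, not an idea.
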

This is false if $p=2$: for example, in dimension two we obtain the RDP $D^0_4$, which is Gorenstein.
\begin{proof}
Let us use the notation of the proof of \autoref{thm:non_S3_qt}. We perform a weighted blow-up of the closed point defined by $(u_1,\dots,u_n)$ in $U_1\subset X$ with weights $(1,p-1,\dots,p-1)$ (see \cite[Example 2.4.3]{Posva_Resolution_of_1_foliations}). A (schematic) affine chart of the weighted blow-up is given by
		$$k[u_1,\dots,u_n]\to k[v_1,\dots,v_n],\quad
		u_1\mapsto v_1, \ u_i\mapsto v_1^{p-1}v_i \ (i\geq 2).$$
The (unique) exceptional divisor $E'$ is given by $(v_1=0)$. We have
		$$du_1\wedge\dots\wedge du_n=
		v_1^{(n-1)(p-1)}dv_1\wedge\dots\wedge dv_n$$
and thus $a(E';X)=(n-1)(p-1)$. Since
		$$u_1\partial_{u_1}=v_1\partial_{v_1}-\sum_{i=2}^n(p-1)v_i\partial_{v_i}, \quad
		u_i\partial_{u_i}=v_i\partial_{v_i} \ (i\geq 2),$$
we find
		$$\psi=v_1\cdot \left[
		\partial_{v_1}+ \sum_{i=2}^n v_1^{p-2}v_i^2\partial_{v_i}\right].$$
Therefore $a(E';b^*\sF)=-1$ and $E'$ is \emph{not} invariant with respect to the pullback of $b^*\sF$. So if $F'$ is the divisor over $Y$ induced by $E'$, we find:
		\begin{eqnarray*}
		a(F';Y) &=& a(F';Z, (p-n)F) \\
		&=& \frac{1}{p}\cdot\left[ a(E'; X, (p-n)E)+(p-1)\cdot a(E';b^*\sF)\right] \\
		&=& \frac{1}{p}\cdot \left[
		(n-1)(p-1)-(p-n)+(p-1)(-1)\right] \\
		&=& n-3 + \frac{2}{p}.
		\end{eqnarray*}
Thus when $p>2$ we see that $a(F';Y)\notin \bZ$. This shows that $Y$ is not $1$-Gorenstein.
\end{proof}

For the definition of $W\sO$-rationality and Cohen--Macaulayness up to (Frobenius) nilpotence used in the following proposition, and the notions that appear in its proof, we refer to \cite{Baudin_Bernasconi_Kawakami_Frob_stable_GRV_fails}.

\begin{proposition}\label{prop:CM_up_to_nilpotence}
The singularities $(\bold{0}\in Y)$ are $W\sO$-rational and Cohen--Macaulay up to nilpotence.
\end{proposition}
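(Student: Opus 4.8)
The plan is to exploit that, by \autoref{claim:p-closed}, $Y=\bA^n/\sF$ is the quotient of the smooth variety $\bA^n$ by an $\alpha_p$-action, so that the quotient morphism $q\colon \bA^n\to Y$ is finite and purely inseparable of degree $p$, hence a universal homeomorphism. Writing $R=\sO_{\bA^n,0}$ and $S=\sO_{Y,\bold{0}}=R^\partial$, one has $R^p\subseteq S\subseteq R$; dually, the absolute Frobenius factors as $F_{\bA^n}=h\circ q$ and $F_Y=q\circ h$, where $h\colon Y\to\bA^n$ corresponds to the ring map $a\mapsto a^p\colon R\to S$ (which lands in $S$ precisely because $R^p\subseteq S$). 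Both claimed properties are insensitive to Frobenius (respectively after rationalizing Witt vectors), so the strategy is to transport them from the regular ring $R$, where they hold trivially, along $q$ and $h$.

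For Cohen--Macaulayness up to nilpotence I would argue on local cohomology. The inclusion $S\hookrightarrow R$ induces $H^i_{\bold{0}}(S)\to H^i_{\bold{0}}(R)$ (finiteness of $q$ identifies the target with the local cohomology of the regular local ring $R$), while the $p$-linear map $a\mapsto a^p\colon R\to S$ induces a Frobenius-semilinear map $H^i_{\bold{0}}(R)\to H^i_{\bold{0}}(S)$. Their composite is precisely the Frobenius action $F$ on $H^i_{\bold{0}}(S)$. Since $R$ is regular of dimension $n$, hence Cohen--Macaulay, $H^i_{\bold{0}}(R)=0$ for all $i<n=\dim Y$; therefore $F$ factors through $0$ and annihilates $H^i_{\bold{0}}(S)$ for every $i<\dim Y$. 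Thus the sub-top local cohomology modules of $S$ are Frobenius-nilpotent (killed in a single step), which is exactly Cohen--Macaulayness up to nilpotence.

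For $W\sO$-rationality I would run the same philosophy with rationalized Witt vector cohomology replacing ordinary local cohomology, the decisive input being that the smooth $\bA^n$ is $W\sO$-rational and that $W\sO_{\bQ}$ is invariant under finite universal homeomorphisms: the maps induced by $q$ and $h$ compose to Frobenius, which becomes an isomorphism after inverting $p$. Combined with the fact that a resolution of $Y$ can be compared, across $q$, to the already smooth $\bA^n$, this should identify the comparison map defining $W\sO$-rationality of $Y$ with the tautological one for $\bA^n$. I expect the genuine difficulty to sit here rather than in the Cohen--Macaulay part: one must match resolutions of $Y$ and of $\bA^n$ along $q$ and verify the functoriality of Witt vector cohomology under the finite purely inseparable morphism $q$ in the precise form required by the definition of $W\sO$-rationality in \cite{Baudin_Bernasconi_Kawakami_Frob_stable_GRV_fails}. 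Once this topological invariance is in hand, $W\sO$-rationality of $Y$ follows formally from that of the regular $\bA^n$.
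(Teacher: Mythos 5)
Your proposal is correct, and its two halves relate differently to the paper's proof. For $W\sO$-rationality you take the same route as the paper: the step you flag as the ``genuine difficulty'' --- functoriality/invariance of rationalized Witt vector cohomology along the finite purely inseparable $q$ --- is precisely the content of \cite[Proposition 4.4.9]{Chatzistamatiou_Rulling_Hodge-Witt_coh_and_Witt-rat_sing}, which states that $W\sO$-rationality descends along finite universal homeomorphisms; the paper's proof of this half is exactly that citation applied to $q\colon \bA^n\to Y$, with no matching of resolutions required. For Cohen--Macaulayness up to nilpotence, however, your argument is genuinely different from and more elementary than the paper's: you use that $R^p\subseteq S=R^{\partial}\subseteq R$ forces the Frobenius of $S$ to factor as $S\hookrightarrow R\xrightarrow{a\mapsto a^p} S$, so that the Frobenius action on $H^i_{\fm}(\sO_{Y,\bold{0}})$ factors through $H^i_{\fm_R}(R)=0$ for $i<n$ and is therefore literally zero; the paper instead works in the derived categories of Frobenius and Cartier crystals, combining \cite[Lemma 2.3]{Baudin_Bernasconi_Kawakami_Frob_stable_GRV_fails} ($\sO_Y\cong Rq_*\sO_{\bA^n}$ in $D^b(\Crys^F_Y)$) with Baudin's duality and Grothendieck duality for the finite morphism $q$ to show the Cartier-crystal dual of $\sO_Y$ sits in a single degree. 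Both arguments ultimately pass through \cite[Lemma 3.2]{Baudin_Bernasconi_Kawakami_Frob_stable_GRV_fails}: the paper uses the ``dual concentrated in one degree'' characterization, while you use the ``Frobenius nilpotent on low local cohomology'' characterization (which the paper itself invokes in \autoref{cor:not_F_inj}), so you should cite that lemma to convert your local-cohomology computation into the crystal-theoretic definition. What your approach buys is the complete avoidance of the crystal and duality machinery (and it even gives $F=0$, not merely nilpotent, on the low local cohomology); what the paper's approach buys is a statement formulated directly in the category where the definition lives, which is more robust if one wants the analogous assertion for the structure sheaf globally rather than at the distinguished point.
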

\begin{proof}
The property of $W\sO$-rationality descends finite universal homeomorphisms by \cite[Proposition 4.4.9]{Chatzistamatiou_Rulling_Hodge-Witt_coh_and_Witt-rat_sing}. It evidently holds for $\bA^n$, and so it holds by $Y$.

Let us show that $Y$ is CM up to nilpotence. By \cite[Lemma 2.3]{Baudin_Bernasconi_Kawakami_Frob_stable_GRV_fails} the natural morphism $\sO_Y\to Rq_*\sO_{\bA^n}$ is an isomorphism in $D^b(\Crys^F_Y)$. Let $\omega_Y^\bullet$ be a dualizing complex on $Y$: by \cite[Theorem 4.3.5]{Baudin_Duality_between_Cartier_crystals_and_perverse_sheaves}, by applying $\RHom(-,\omega_Y^\bullet)$ to the previous isomorphism we obtain
		$$\RHom(Rq_*\sO_{\bA^n},\omega_Y^\bullet)\cong
		\RHom(\sO_Y,\omega_Y^\bullet) \quad
		\text{in }D^b(\Crys^C_Y).$$
Now we have
		\begin{eqnarray*}
		\RHom(Rq_*\sO_{\bA^n},\omega_Y^\bullet) &\cong &
		Rq_*\RHom(\sO_{\bA^n},\omega_{\bA^n}) \\
		&\cong & Rq_*\omega_{\bA^n} \\
		&\cong & q_*\omega_{\bA^n}
		\end{eqnarray*}
in $D^b(\Crys^C_Y)$, where the first isomorphism holds by \cite[Corollary 5.1.7]{Baudin_Duality_between_Cartier_crystals_and_perverse_sheaves} and the third one holds as $q$ is finite. This shows that $\RHom(\sO_Y,\omega_Y^\bullet)$ is supported in a single degree as an element of the category $\Crys^C_Y$. By \cite[Lemma 3.2]{Baudin_Bernasconi_Kawakami_Frob_stable_GRV_fails} it follows that $\sO_Y$ is CM up to nilpotence as an element of $\Crys_Y^F$. 
\end{proof}

\begin{corollary}\label{cor:not_F_inj}
The singularities $(\bold{0}\in Y)$ are never $F$-injective.
\end{corollary}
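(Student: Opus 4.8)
The plan is to argue directly from the explicit structure of the quotient, which here is cleaner than invoking \autoref{prop:CM_up_to_nilpotence}. Since $\partial^{[p]}=0$ we have $\partial(f^p)=pf^{p-1}\partial f=0$ in characteristic $p$, so $\sO_{\bA^n}^{\,p}\subseteq \sO_Y=\sO_{\bA^n}^{\partial}\subseteq \sO_{\bA^n}$ and the quotient $q\colon \bA^n\to Y$ is finite. Hence the Frobenius of $\sO_Y$ factors as $\sO_Y\hookrightarrow \sO_{\bA^n}\xrightarrow{F}\sO_{\bA^n}^{\,p}\hookrightarrow \sO_Y$. Writing $\fm$ for the maximal ideal of $\sO_{Y,\bold 0}$ and $\fm_{\bA^n}$ for the maximal ideal of $\sO_{\bA^n}$ at the origin (the unique preimage of $\bold 0$), local cohomology commutes with the finite pushforward $q_*$, so $H^i_{\fm}(\sO_{\bA^n})\cong H^i_{\fm_{\bA^n}}(\sO_{\bA^n})$; this vanishes for $i<n$ and carries an injective Frobenius for $i=n$, as $\bA^n$ is regular.

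First I would use this factorization, together with the naturality relation $\iota\circ F=F\circ\iota$ for the map $\iota\colon H^i_{\fm}(\sO_Y)\to H^i_{\fm}(\sO_{\bA^n})$ induced by $\sO_Y\hookrightarrow\sO_{\bA^n}$, to control the Frobenius on the local cohomology of $\sO_Y$. For $i<n$ the Frobenius of $\sO_Y$ factors through $H^i_{\fm}(\sO_{\bA^n})=0$, hence it is the zero map. For $i=n$, injectivity of Frobenius on the target forces $\ker\bigl(F\colon H^n_{\fm}(\sO_Y)\to H^n_{\fm}(\sO_Y)\bigr)=\ker\iota$. Consequently $Y$ is $F$-injective if and only if $H^i_{\fm}(\sO_Y)=0$ for all $i<n$ (that is, $\sO_{Y,\bold 0}$ is Cohen--Macaulay) \emph{and} $\iota$ is injective in top degree.

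This settles $n\ge 3$ at once: by \autoref{thm:non_S3_qt}(2) the ring $\sO_{Y,\bold 0}$ is not $S_3$, so it is not Cohen--Macaulay, and some $H^i_{\fm}(\sO_Y)$ with $i<n$ is a nonzero module on which Frobenius vanishes; thus $Y$ is not $F$-injective. For $n=2$ the ring $\sO_{Y,\bold 0}$ is a normal surface, hence Cohen--Macaulay, and $F$-injectivity reduces to the injectivity of $\iota$ in degree $2$. After completing, this is equivalent to purity of the module-finite inclusion $\sO_Y\hookrightarrow \sO_{\bA^n}$. Since $\bA^n$ is strongly $F$-regular and strong $F$-regularity passes to pure subrings in the $F$-finite setting, purity would force $\sO_{Y,\bold 0}$ to be strongly $F$-regular; as $Y$ is $\bQ$-factorial (\autoref{thm:non_S3_qt}(1)), hence $\bQ$-Gorenstein, this would in turn force $(\bold 0\in Y)$ to be klt. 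In other words, for these quotients $F$-injectivity is equivalent to strong $F$-regularity.

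I would then rule out strong $F$-regularity in the outstanding cases. If $n=2$ and $p\ge 3$ then $n\le p-1$, so by \autoref{thm:non_S3_qt}(3) the singularity is not klt, hence not strongly $F$-regular, and $\iota$ is not injective. The only remaining case is $n=2=p$, where $Y$ is the rational double point $D^0_4$ with equation $t^2=uv(u+v)$ obtained from the invariants $u=x^2,\ v=y^2,\ t=xy(x+y)$; here Fedder's criterion applies since $t^2-uv(u+v)\in\fm^{[2]}$, so $D^0_4$ is not $F$-pure, a fortiori not strongly $F$-regular, and once more $\iota$ fails to be injective. I expect the genuine difficulty to be precisely this top-degree analysis for $n=2$: unlike in lower degrees the Frobenius on $H^2_{\fm}(\sO_Y)$ need not vanish, and one must actually measure the failure of purity of $\sO_Y\hookrightarrow\sO_{\bA^n}$---equivalently the non-surjectivity of the trace-type map $\omega_{\bA^n}\to\omega_Y$ dual to $\iota$---which is where the explicit geometry of the singularity must enter.
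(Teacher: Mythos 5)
Your reduction of $F$-injectivity to the two conditions ``$H^i_{\fm}(\sO_{Y,\bold{0}})=0$ for $i<n$'' and ``$\iota$ injective in top degree'' is correct, and your treatment of $n\geq 3$ is sound: the factorization of Frobenius through $\sO_{\bA^n}$ shows that Frobenius is actually \emph{zero} on $H^i_{\fm}(\sO_{Y,\bold{0}})$ for $i<n$, and non-$S_3$ gives $H^2_{\fm}\neq 0$. This parallels the paper's own argument, which gets nilpotence (rather than vanishing) from \autoref{prop:CM_up_to_nilpotence}; note that this self-contained argument, yours and the paper's alike, only covers $n\geq 3$, and the paper defers the remaining cases to the cited [Lemma 2.5.4]. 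Your case $n=2=p$ is also fine, since there $\sO_{Y,\bold{0}}$ is the Gorenstein hypersurface $D_4^0$, and Fedder plus ``$F$-pure $\Leftrightarrow$ $F$-injective for Gorenstein rings'' closes it.

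The gap is the case $n=2$, $p\geq 3$, and it sits exactly at the step you flag as the ``genuine difficulty''. Write $R=\widehat{\sO}_{Y,\bold{0}}$ and $S=\widehat{\sO}_{\bA^2,0}$. You claim that injectivity of $\iota\colon H^2_{\fm}(R)\to H^2_{\fm}(S)$ is \emph{equivalent} to purity of $R\hookrightarrow S$. Only one implication is standard: purity implies injectivity of $\iota$ (top local cohomology is a base change, so purity applied to $M=H^2_{\fm}(R)$ gives it). The direction your argument needs --- $\iota$ injective $\Rightarrow$ pure --- is, by Matlis duality, the assertion that surjectivity of the evaluation map $\Hom_R(S,\omega_R)\to\omega_R$ (the trace) forces surjectivity of $\Hom_R(S,R)\to R$, i.e.\ splitting. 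These coincide when $\omega_R\cong R$, i.e.\ when $R$ is Gorenstein; but \autoref{prop:not_1_Gor} says that for $p>2$ the ring $\sO_{Y,\bold{0}}$ is \emph{not} Gorenstein, so you invoke the equivalence precisely where it is unsupported. This is the same conflation as ``$F$-injective $\Rightarrow$ $F$-pure'' for Cohen--Macaulay rings, which is known to fail in the non-Gorenstein case. Moreover, no soft repair can rescue the chain ``$F$-injective $\Rightarrow$ pure $\Rightarrow$ strongly $F$-regular $\Rightarrow$ klt'': there exist strictly lc, $F$-pure (hence $F$-injective) normal surface singularities (cones over ordinary elliptic curves), so for $(n,p)=(2,3)$ --- where $Y$ is strictly lc --- non-klt-ness alone is compatible with $F$-injectivity, and one must exploit the $\alpha_p$-quotient structure in an essential way, which is what the paper's citation of [Lemma 2.5.4] supplies. (A smaller imprecision: for $(n,p)=(2,3)$, \autoref{thm:non_S3_qt}(3) only gives that $Y$ is lc; ``not klt'' requires the observation, made in the remark following that theorem, that $a(F;Y)=n-p=-1$, so that $\bold{0}$ is an lc center.)
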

\begin{proof}
This follows from \cite[Lemma 2.5.4]{Posva_Singularities_quotients_by_1_foliations}, but here is another proof that utilizes the notions introduced above. Since $Y$ is CM up to nilpotence, the action of Frobenius on the local cohomology groups $H^i_{\fm}(\sO_{Y,\bold{0}})$, where $\fm$ is the maximal ideal and $0\leq i< \dim \sO_{Y,\bold{0}}$, is nilpotent (see \cite[Lemma 3.2]{Baudin_Bernasconi_Kawakami_Frob_stable_GRV_fails}). Since $\sO_{Y,\bold{0}}$ is $S_2$ but not $S_3$, we have $H^2_{\fm}(\sO_{Y,\bold{0}})\neq 0$. Thus the Frobenius action on $H^2_{\fm}(\sO_{Y,\bold{0}})$ is not injective. This shows that $\sO_{Y,\bold{0}}$ is not $F$-injective.
\end{proof}

\begin{proposition}\label{prop:rational_up_to_Frob}
There exist proper birational morphisms $\pi\colon V\to Y$ where $V$ is a regular variety. Moreover, for any such:
	\begin{enumerate}
		\item the Frobenius action on $R^i\pi_*\sO_V$ is nilpotent for $i>0$ \emph{(\footnote{This is equivalent to say that $Y$ has $\bF_p$-rational singularities, see \cite[Remark 3.5]{Baudin_Bernasconi_Kawakami_Frob_stable_GRV_fails}. Combining this statement, \autoref{prop:CM_up_to_nilpotence} and \cite[Proposition 3.8]{Baudin_Bernasconi_Kawakami_Frob_stable_GRV_fails}, we obtain that $Y$ satisfies the \emph{Frobenius stable Grauert--Riemenschneider vanishing} \cite[Definition 3.6]{Baudin_Bernasconi_Kawakami_Frob_stable_GRV_fails}.
		})}, and
		\item the Frobenius trace action on $R^i\pi_*\omega_V$ is nilpotent for $i>0$.
	\end{enumerate}
\end{proposition}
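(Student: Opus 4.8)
The plan is to produce the resolution explicitly from the foliation and then to read off both nilpotency statements from the crystal formalism of \cite{Baudin_Bernasconi_Kawakami_Frob_stable_GRV_fails}, transporting cohomological properties along the finite universal homeomorphism $q\colon \bA^n\to Y$ exactly as in the $W\sO$-rational part of \autoref{prop:CM_up_to_nilpotence}. Since resolution of singularities is not available in the relevant dimensions, the existence of $\pi$ cannot merely be cited and must be built by hand. As in the proof of \autoref{thm:non_S3_qt}, after $b\colon X=\Bl_0\bA^n\to\bA^n$ the foliation $b^*\sF$ is lc with only multiplicative singularities, supported at the finitely many points where the local generator $\psi$ vanishes. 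Away from these points $b^*\sF$ is a regular foliation on a regular variety, so $Z=X/b^*\sF$ is regular there; at each multiplicative point the quotient is, up to completion, a cyclic ($\mu_p$-)quotient singularity, hence toric. Toric singularities admit resolutions in every characteristic, so there is a proper birational $V\to Z$ with $V$ regular, and composing with $\fb\colon Z\to Y$ of \eqref{eqn:pseudo_resolution} gives the desired $\pi\colon V\to Y$.

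For assertion (1) I would prove that $Y$ has $\bF_p$-rational singularities by descent along $q$. The mechanism is that $\Crys^F$ is insensitive to finite universal homeomorphisms: concretely, $\sO_Y\cong Rq_*\sO_{\bA^n}$ in $D^b(\Crys^F_Y)$ by \cite[Lemma 2.3]{Baudin_Bernasconi_Kawakami_Frob_stable_GRV_fails}, and this, together with the regularity of $\bA^n$, transports $\bF_p$-rationality from $\bA^n$ to $Y$ (one may verify the descent by choosing a foliation-compatible regular cover $\rho\colon W\to\bA^n$ so that $W\to V$ is the quotient by $\rho^*b^*\sF$, pushing the isomorphism $\sO_{\bA^n}\cong R\rho_*\sO_W$ forward by $Rq_*$, and comparing with $\sO_Y\cong Rq_*\sO_{\bA^n}$). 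The outcome is $\sO_Y\cong R\pi_*\sO_V$ in $D^b(\Crys^F_Y)$; as $Y$ is normal this says exactly that $R^i\pi_*\sO_V$ has nilpotent Frobenius for $i>0$, which by \cite[Remark 3.5]{Baudin_Bernasconi_Kawakami_Frob_stable_GRV_fails} is assertion (1). Independence of the chosen resolution follows from the same regular-case input.

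Assertion (2) I would then obtain following the recipe in the footnote to \autoref{prop:rational_up_to_Frob}: combining (1), the Cohen--Macaulayness up to nilpotence of \autoref{prop:CM_up_to_nilpotence}, and \cite[Proposition 3.8]{Baudin_Bernasconi_Kawakami_Frob_stable_GRV_fails} shows that $Y$ satisfies the Frobenius stable Grauert--Riemenschneider vanishing of \cite[Definition 3.6]{Baudin_Bernasconi_Kawakami_Frob_stable_GRV_fails}, which is precisely the nilpotency of the Frobenius trace on $R^i\pi_*\omega_V$ for $i>0$.

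The main obstacle is the base input that the \emph{regular} variety $\bA^n$ is $\bF_p$-rational: in characteristic $p$ one cannot expect $R^i\rho_*\sO_W=0$ for a resolution $\rho\colon W\to\bA^n$, only its Frobenius-nilpotence, so this step is genuinely cohomological and must be extracted from the crystal formalism (or deduced from $W\sO$-rationality of regular schemes \cite{Chatzistamatiou_Rulling_Hodge-Witt_coh_and_Witt-rat_sing}, which is already used in \autoref{prop:CM_up_to_nilpotence}). A secondary technical point, needed only to make the descent in (1) explicit, is arranging a foliation-compatible regular cover $W\to V$ with both source and quotient smooth so that \cite[Lemma 2.3]{Baudin_Bernasconi_Kawakami_Frob_stable_GRV_fails} applies on the nose; if instead one cites a ready-made descent statement for $\bF_p$-rationality along finite universal homeomorphisms, this point disappears.
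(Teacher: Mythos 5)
Your construction of $\pi$ (blow up the origin, pass to $Z=X/b^*\sF$ with isolated toric $\mu_p$-singularities, resolve those, compose with $\fb$) is exactly the paper's, and your route to assertion (2) via the footnote's combination of (1), \autoref{prop:CM_up_to_nilpotence} and \cite[Proposition 3.8]{Baudin_Bernasconi_Kawakami_Frob_stable_GRV_fails} is acceptable once (1) is known. The gap is in your proof of (1). Your descent mechanism requires a proper birational $\rho\colon W\to\bA^n$ with $W$ regular such that the resolution $V$ of $Y$ is the quotient $W/\rho^*b^*\sF$. A finite purely inseparable degree-$p$ morphism between regular schemes is flat and corresponds to a \emph{regular} $1$-foliation (at a multiplicative singular point the quotient is a genuinely singular $\mu_p$-quotient singularity by \cite[Proposition 4.1.1]{Posva_Singularities_quotients_by_1_foliations}), so your cover would amount to resolving the multiplicative singularities of $b^*\sF$ within the category of regular schemes. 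This is precisely what \autoref{rmk:can_we_get_smth_family?} rules out for $p>2$: the relevant singularities (local form $u_1\partial_{u_1}+\sum_i\pm u_i\partial_{u_i}$) require weighted blow-ups with weights $(1,p-1,\dots,p-1)$, which produce non-schematic stacks or singular coarse spaces. Your fallback, citing ``a ready-made descent statement for $\bF_p$-rationality along finite universal homeomorphisms,'' is not available either: \cite[Lemma 2.3]{Baudin_Bernasconi_Kawakami_Frob_stable_GRV_fails} compares $\sO_Y$ with $Rq_*\sO_{\bA^n}$ for the homeomorphism itself and says nothing about how resolutions of the two sides compare; no such descent theorem appears in the cited literature. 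So what you call a ``secondary technical point'' is in fact the entire content of assertion (1).

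The repair --- and it is what the paper does --- is to run the descent through the intermediate quotient $Z$, where a finite universal homeomorphism from a regular variety \emph{does} exist, namely $q'\colon X\to Z$. The toric resolution satisfies the genuine vanishings $R^i\mu_*\sO_{V'}=0=R^i\mu_*\omega_{V'}$ for $i>0$, so $R^i\pi'_*\sO_{V'}=R^i\fb_*\sO_Z$ and $R^i\pi'_*\omega_{V'}=R^i\fb_*\omega_Z$. Then, because the Frobenius $F_Z$ on $\sO_Z$ factors through $q'_*\sO_X$, the map $R^i\fb_*(F_Z)$ factors through $R^i\fb_*(q'_*\sO_X)=q_*R^ib_*\sO_X=0$ for $i>0$ (a point blow-up of $\bA^n$), so Frobenius acts as zero, not merely nilpotently, on $R^i\fb_*\sO_Z$; dually, the trace on $\omega_Z$ factors through $q'_*\omega_X$ and $q_*R^ib_*\omega_X=0$, which gives (2) directly, without invoking \cite[Proposition 3.8]{Baudin_Bernasconi_Kawakami_Frob_stable_GRV_fails}. (If you prefer your crystal-theoretic language, apply \cite[Lemma 2.3]{Baudin_Bernasconi_Kawakami_Frob_stable_GRV_fails} to $q'$ and push forward along $\fb$.) The transfer to an arbitrary regular $V$ via \cite[Theorem 1]{Chatzistamatiou_Rulling_Higher_direct_images_in_pos_char} is then as you say.
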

\begin{proof}
We begin by constructing a specific resolution of singularities and checking the nilpotence statements along it. Consider the diagram \autoref{eqn:pseudo_resolution} in the proof of \autoref{thm:non_S3_qt}. The $1$-foliation $b^*\sF$ is locally generated by multiplicative derivations with isolated singularities. Thus the quotient $Z=X/b^*\sF$ has only isolated $\mu_p$-singularities \cite[Proposition 4.1.1]{Posva_Singularities_quotients_by_1_foliations}. These toroidal singularities can be resolved using \cite[Theorem 11* p.94]{KKMSD_Toroidal_embeddings}: let $\mu\colon V'\to Z$ be the induced resolution of singularities. The composition $\pi'=\fb\circ \mu\colon V'\to Y$ is also a resolution of singularities.

We claim that the sheaves $R^i\mu_*\sO_{V'}$ and $R^i\mu_*\omega_{V'}$ vanish for $i>0$. This can be checked formally on $Z$, and thus we reduce to the toric situation. Then vanishing holds for $i>0$ by \cite[Theorem 14 p. 52]{KKMSD_Toroidal_embeddings}. From the Leray spectral sequence it follows that
		$$R^i\pi'_*\sO_{V'}=R^i\fb_*\sO_{Z}\quad \text{and}\quad
		R^i\pi'_*\omega_{V'}=R^i\fb_*\omega_{Z} \quad \text{for } i\geq 0.$$
By naturality of the Frobenius, these equalities reduce the nilpotence statements along $\pi'$ to the nilpotence statements along the morphism $\fb\colon Z\to Y$. By construction the absolute Frobenius morphism $F_Z\colon \sO_Z\to F_*\sO_Z$ factors through $q'_*\sO_X$. Therefore $R^i\fb_*(F_Z)$ factors through $R^i\fb_*(q'_*\sO_X)=q_*R^ib_*\sO_X$. Since $b$ is the blow-up of a point of $\bA^n$ we have $R^ib_*\sO_X=0$ for $i>0$, which shows that $R^i\fb_*(F_Y)$ is the zero map for $i>0$. By applying (derived) Hom into $\omega_Z$ to the map $F_Z$, one finds dually that the Frobenius trace $\Tr_Z\colon F_*\omega_Z\to \omega_Z$ factors through $q'_*\omega_X$. As before we have $R^i\fb_*(q_*'\omega_X)=q_*R^ib_*\omega_X=0$ for $i>0$, which shows that $R^i\fb_*(\Tr_Z)$ is the zero map for $i>0$. This proves the nilpotence statements along $\fb$ and $\pi'$.

Finally, let $\pi\colon V\to Y$ be another proper birational morphism with $V$ regular. By \cite[Theorem 1]{Chatzistamatiou_Rulling_Higher_direct_images_in_pos_char} we have $R^i\pi_*\sO_V\cong R^i\pi'_*\sO_{V'}$ and $R^i\pi_*\omega_V\cong R^i\pi'_*\omega_{V'}$ for every $i\geq 0$. So the nilpotence statements along $\pi'$ imply the corresponding nilpotence statements along $\pi$, which concludes the proof.
\end{proof}

\section{Locally stable families with non-$S_2$ special fibers}
\subsection{First example: $\mathbf{\dim\geq \max\{p,3\}.}$}
Consider the affine space $\bA^{n+1}_{x_1,\dots,x_n,y}\times\bA^1_t$ and the derivation
		\begin{equation}\label{eqn:family_quadratic_derivation}
		\partial_m=\sum_{i=1}^{n}x_i^2\partial_{x_i} + t^m\mu(y)\partial_{y} \quad \text{on }\bA^{n+1}\times\bA^1,
		\end{equation}
where $m\geq 0$ and $\mu(y)\in k[y]$ satisfies $(\mu \partial_{y})^{[p]}=0$ and becomes a unit in a neighbourhood of the origin, i.e.\ $\mu(0)\neq 0$. 

We let $\bA=\bA^n_{\bold{x}}\times D(\mu(y))\subset \bA^{n+1}$ be the affine open subset where $\mu(y)$ is invertible. We let $\sF_m\subset T_{(\bA\times \bA^1)/k}$ be the sub-module generated by $\partial_m$. 

(We can of course take $m=1$, $\mu=1$ and $\bA=\bA^{n+1}$; but this extra generality will be useful for \autoref{thm:stable_families} below.)

\begin{claim}\label{claim:F_m_family_of_foliations}
$\sF_m$ is a family of $1$-foliations over $\bA^1_t$ (\autoref{def:family_foliations}) if and only if $n\geq 2$.
\end{claim}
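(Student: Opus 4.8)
The plan is to check the three requirements of \autoref{def:family_foliations} for the smooth projection $X=\bA\times\bA^1\to B=\bA^1_t$: that $\sF_m$ is a relative $1$-foliation, that $T_{X/B}/\sF_m$ is flat over $B$, and that each fibre restriction $\sF_m\otimes\sO_{X_{t_0}}\hookrightarrow T_{X_{t_0}/k}$ is a $1$-foliation. Since $\partial_m$ has no $\partial_t$-component, the containment $\sF_m\subset T_{X/B}$ is automatic, and everything reduces to statements about the single generator $\partial_m$, whose coefficient vector in the frame $\partial_{x_1},\dots,\partial_{x_n},\partial_y$ of the free module $T_{X/B}$ is $(x_1^2,\dots,x_n^2,t^m\mu(y))$.

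First I would show $\sF_m$ is itself a $1$-foliation, i.e.\ $p$-closed and saturated. For $p$-closedness, the $n+1$ summands of $\partial_m$ pairwise commute (they involve distinct variables), so Hochschild's formula \cite[(2.3.0.a)]{Posva_Singularities_quotients_by_1_foliations} gives $\partial_m^{[p]}=\sum_i(x_i^2\partial_{x_i})^{[p]}+(t^m\mu(y)\partial_y)^{[p]}$; the first terms vanish by \autoref{claim:p-closed}, and since $\partial_y(t^m)=0$ the same formula yields $(t^m\mu(y)\partial_y)^{[p]}=t^{mp}(\mu(y)\partial_y)^{[p]}=0$ by hypothesis, so $\partial_m^{[p]}=0\in\sF_m$. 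For saturation in $T_{X/B}$, note that on $\bA$ the function $\mu(y)$ is a unit, so the last coordinate is a unit multiple of $t^m$, which is coprime to each $x_i^2$; hence the entries of the coefficient vector have no common factor and $\sF_m$ is saturated. This already holds for every $n\geq 1$, so the dichotomy must come from the fibrewise condition rather than from $\sF_m$ being a foliation.

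Next I would dispatch flatness. From the exact sequence $0\to\sO_X\xrightarrow{\partial_m}T_{X/B}\to T_{X/B}/\sF_m\to 0$ (with $\sO_X$, $T_{X/B}$ free, hence flat, over $k[t]$), the local flatness criterion over the PID $k[t]$ identifies the obstruction with $\ker\!\big(\sO_{X_{t_0}}\xrightarrow{\partial_m|_{X_{t_0}}}T_{X_{t_0}/k}\big)$ for each $t_0$. As $\partial_m|_{X_{t_0}}\neq 0$ (its entries $x_i^2$ never all vanish when $n\geq 1$) and $\sO_{X_{t_0}}$ is a domain, this kernel is zero; thus $T_{X/B}/\sF_m$ is flat over $B$ for every $n\geq 1$, and the proof again localizes onto the fibrewise condition.

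The decisive step, which I expect to be the main obstacle, is the fibrewise saturation of $\sF_m\otimes\sO_{X_{t_0}}$. For $t_0\neq 0$ the last entry $t_0^m\mu(y)$ is a unit, so the restricted generator extends to a basis and spans a split, hence saturated, rank-one submodule, which is a $1$-foliation ($p$-closedness and involutivity being inherited from $\partial_m^{[p]}=0$ and rank one). The only fibre where this unit can disappear is $t_0=0$ with $m\geq 1$, where the generator degenerates to $\sum_{i=1}^n x_i^2\partial_{x_i}$ with coefficient vector $(x_1^2,\dots,x_n^2,0)$. Computing its saturation along the line it spans in $T_{X_0/k}$, an integral vector $\lambda(x_1^2,\dots,x_n^2,0)$ lies in the submodule iff the denominator of $\lambda$ divides $\gcd(x_1^2,\dots,x_n^2)$; for $n\geq 2$ the squares $x_1^2,x_2^2$ are coprime, so this gcd is a unit and the submodule is saturated, whereas for $n=1$ the gcd is $x_1^2$ and the primitive vector $\partial_{x_1}$ lies in the saturation but not in $\sO_{X_0}\cdot x_1^2\partial_{x_1}$, so saturation fails. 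Assembling the pieces, all defining conditions hold exactly when the fibre over $t_0=0$ is saturated, and by this computation that happens precisely for $n\geq 2$, which establishes the claim.
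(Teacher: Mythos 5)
Your proposal is correct and follows essentially the same route as the paper: verify that $\sF_m$ is a relative $1$-foliation ($p$-closedness via Hochschild's formula on commuting summands), check flatness of the cokernel over $k[t]$ (your Tor computation is just the torsion-freeness the paper invokes), and reduce the dichotomy to saturation of the fibre restrictions, which fails only on the central fibre $(t=0)$ for $m\geq 1$, where the generator degenerates to $\sum_i x_i^2\partial_{x_i}$ and the gcd argument gives saturation precisely when $n\geq 2$. Your write-up is somewhat more explicit than the paper's (notably the flatness step and the UFD/gcd saturation criterion), but the decomposition of the problem and the key computation are the same.
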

\begin{proof}\renewcommand{\qedsymbol}{$\lozenge$}
Clearly $\partial_m\in T_{(\bA\times\bA^1)/\bA^1}$, and we see that $\partial_m^{[p]}=0$ as in \autoref{claim:p-closed} (and this survives along any specialization of $t$). Thus $\sF_m$ is a relative $1$-foliation over $\bA^1_t$, for any $n\geq 1$. It is easily seen that the cokernel of $\sF_m\hookrightarrow T_{\bA\times \bA^1/\bA^1}$ is torsion-free, hence flat, over $\bA^1$.

The restriction of $\sF_m$ to the generic fiber of the projection $\bA\times\bA^1_t\to \bA^1_t$ is the restriction to $\bA\times\Spec(k(t))$ of the module generated by
		$$\sum_{i=1}^nx_i^2\partial_{x_i}+t^m\mu(y)\partial_y\in 
		\Der_{k(t)}k(t)[x_1,\dots,x_n,y],$$
which is clearly a $1$-foliation for every $n\geq 1$. A similar computation shows that $\sF_m|_{(t=\alpha)}$ is a $1$-foliation for every $0\neq \alpha\in k$ and $n\geq 1$. The only condition is given by the restriction to the central fiber $(t=0)$: indeed, $\sF_m|_{(t=0)}$ is generated by
		$$\sum_{i=1}^nx_i^2\partial_{x_i}\in 
		\Der_{k}k[x_1,\dots,x_n,y],$$
so $\sF_m|_{(t=0)}$ is a $1$-foliation if and only $n\geq 2$. (For $n=1$, observe that $x_1^2\partial_{x_1}$ does not generate a saturated sub-module of $\Der_{k}k[x_1,\dots,x_n,y]$.) The statement follows.
\end{proof}

The following lemma explains how to simplify the singularities of $\sF_m$.

\begin{lemma}\label{lemma:blowup_to_reduce_degree}
We have $\Sing(\sF_m)=V(x_1,\dots,x_n,t)$ for $m\geq 1$, and $\sF_0$ is regular. If $m\geq 1$ and $b\colon X=\Bl_{\Sing(\sF_m)}\to \bA\times\bA^1$ is the blow-up of that singular locus with exceptional divisor $E$, then:
	\begin{enumerate}
		\item $a(E;\sF_m)=-1$;
		\item $E$ is $b^*\sF_m$-invariant;
		\item $b^*\sF_m$ is lc on the $x_i$-patches, and isomorphic over $\bA^1_t$ to $\sF_{m-1}$ on the $t$-patch;
		\item if $m\geq 2$, the locus of non-lc singularities $S$ of $b^*\sF_m$ is closed in $X$ and contained on the $t$-patch;
		\item $\mult_SE=1$, $\codim_XS=n+1$ and $S$ is disjoint from the strict transform of $(t=0)\subset \bA\times\bA^1_t$.
	\end{enumerate}
\end{lemma}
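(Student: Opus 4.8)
The plan is to run the single blow-up $b$ through its $n+1$ affine charts, using the pullback formulas of \autoref{example:pullback_smooth_blow-up}, and to read off each assertion from the shape of $b^*\partial_m$ on each chart. First I would record the singular locus: since $\mu(y)$ is a unit on $\bA$, the coefficients $x_1^2,\dots,x_n^2,t^m\mu(y)$ of $\partial_m$ vanish simultaneously exactly along $V(x_1,\dots,x_n,t)$ when $m\geq 1$, while for $m=0$ the coefficient of $\partial_y$ is the unit $\mu(y)$, so $\partial_0$ never vanishes and $\sF_0$ is a subbundle, i.e.\ regular. For $m\geq 1$ I blow up $(x_1,\dots,x_n,t)$. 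On the $x_1$-chart (with $x_1=u_1$, $x_i=u_1u_i$, $t=u_1w$, $y=y$) the formulas give
$$b^*\partial_m=u_1\cdot\Big[u_1\partial_{u_1}+\sum_{i=2}^n(u_i^2-u_i)\partial_{u_i}-w\partial_w+u_1^{m-1}w^m\mu(y)\partial_y\Big]=:u_1\psi,$$
and on the $t$-chart (with $t=s$, $x_i=s\xi_i$, $y=y$)
$$b^*\partial_m=s\cdot\Big[\sum_{i=1}^n\xi_i^2\partial_{\xi_i}+s^{m-1}\mu(y)\partial_y\Big]=:s\psi_t.$$
From the factor $u_1$ (resp.\ $s$), together with the fact that $\psi$ and $\psi_t$ are saturated generators of $b^*\sF_m$ on their charts, I read off $a(E;\sF_m)=-1$, which is (1); and since $\psi(u_1)=u_1\in(u_1)$ and $\psi_t(s)=0\in(s)$, the exceptional divisor $E$ is $b^*\sF_m$-invariant, which is (2).

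For (3), on the $x_j$-charts I argue as in \autoref{thm:non_S3_qt}: $b^*\partial_m$ is $p$-closed (being $b^*$ of the $p$-closed $\partial_m$, cf.\ \autoref{claim:F_m_family_of_foliations}), hence so is its scaling $\psi$ by Hochschild's formula; as $\psi(u_1)=u_1$, \autoref{lemma:p_power_of_derivation} forces $\psi^{[p]}=\psi$, so $b^*\sF_m$ has only multiplicative singularities on these charts and is therefore lc by \autoref{prop:mult_is_lc}. The crux is the $t$-chart: the bracketed generator $\psi_t=\sum_i\xi_i^2\partial_{\xi_i}+s^{m-1}\mu(y)\partial_y$ is literally $\partial_{m-1}$ after the relabelling $(\xi_i,s)\leftrightarrow(x_i,t)$, with $s$ now playing the role of the base parameter. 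Hence $b^*\sF_m$ restricted to the $t$-chart is isomorphic over $\bA^1_t$ to $\sF_{m-1}$, which gives (3).

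For (4) I would then locate the non-lc locus. The $x_j$-charts are lc by the previous paragraph, so any non-lc point lies in the $t$-chart. There $\psi_t=\partial_{m-1}$ satisfies $\psi_t^{[p]}=0$ (as in \autoref{claim:F_m_family_of_foliations}), so at each point of $\Sing(\psi_t)=V(\xi_1,\dots,\xi_n,s)$ (nonempty since $m-1\geq 1$) the derivation is not multiplicative — its $p$-th power vanishes while the derivation itself lies in the maximal ideal, so no unit-rescaled generator can satisfy $(\,\cdot\,)^{[p]}=u(\,\cdot\,)$ with $u$ a unit — whence it is not lc by \autoref{prop:mult_is_lc}; away from this locus $\psi_t$ is regular. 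Thus for $m\geq 2$ the non-lc locus is exactly $S=V(\xi_1,\dots,\xi_n,s)$ (and for $m=1$, $\psi_t=\partial_0$ is regular, so $b^*\sF_m$ is lc everywhere). Since $E$ is the $\bP^n$-bundle over the center $V(x_1,\dots,x_n,t)\cong D(\mu(y))$ and $S$ is the section given fiberwise by the projective point in the $t$-direction, $S$ is closed in $E$, hence in $X$. Then (5) is bookkeeping on the $t$-chart with coordinates $(\xi_1,\dots,\xi_n,s,y)$: the $n+1$ equations $\xi_1=\dots=\xi_n=s=0$ cut out the curve $S\cong D(\mu(y))$, so $\codim_X S=n+1$; the reduced divisor $E=(s=0)$ contains $S$ with $\mult_S E=1$; and the strict transform of $(t=0)$ meets only the $x_j$-charts (there $t=u_1w$ splits the total transform into $E=(u_1=0)$ and the strict transform $(w=0)$), whereas on the $t$-chart the total transform of $(t=0)=(s=0)$ is entirely exceptional, so the strict transform is disjoint from the $t$-patch and \emph{a fortiori} from $S$.

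I expect the main obstacle to be the careful chart-by-chart bookkeeping, and in particular the clean identification $\psi_t=\partial_{m-1}$ on the $t$-chart, which simultaneously yields (3) and drives the descending induction on $m$ in the applications; a secondary subtlety is verifying that the non-multiplicative singularities on the $t$-chart are genuinely non-lc (via \autoref{prop:mult_is_lc}) rather than removable, and that the resulting $S$ is closed in all of $X$ rather than merely in the $t$-patch.
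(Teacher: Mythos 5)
Your proposal is correct and follows essentially the same route as the paper: the same chart-by-chart computation, factoring out the exceptional coordinate, the combination of Hochschild's formula, \autoref{lemma:p_power_of_derivation} and \autoref{prop:mult_is_lc} on the $x_i$-patches, and the identification of the bracketed generator with $\partial_{m-1}$ on the $t$-patch. The only difference is that you spell out two details the paper leaves implicit — that the singularities of $\psi_t$ along $V(\xi_1,\dots,\xi_n,s)$ are genuinely non-multiplicative (hence non-lc), and that $S$ is closed in all of $X$ via the section description inside $E$ — both of which are correct.
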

\begin{proof}
The statement about the singular locus is clear. On the $x_1$-patch $U_1$ the blow-up $b$ is given by
			\begin{eqnarray*}
				b^*\colon && k[x_1,\dots,x_n,y,t][\mu(y)^{-1}]\longrightarrow k[u_1,\dots,u_n,y,s][\mu(y)^{-1}], \\
				&& x_1\mapsto u_1, \quad
				x_i\mapsto u_1u_i \ (1<i\leq n), \quad
				y\mapsto y,\quad
				t\mapsto u_1s.
				\end{eqnarray*}
			Therefore we have
					\begin{eqnarray*}
					b^*\partial_m|_{U_1} &=&
					u_1\cdot \left(u_1\partial_{u_1} 
						-\sum_{i=2}^{n} u_i\partial_{u_i}
						- s\partial_s\right)
					+ \sum_{i=2}^{n} u_1^2u_i^2 \cdot \frac{1}{u_1}\partial_{u_i}
					+ u_1^ms^m \mu(y) \cdot \partial_{y} \\
					&=& u_1 \cdot \underbrace{\left[
					u_1\partial_{u_1}
					+\sum_{i=2}^{n} (-u_i+u_i^2)\partial_{u_i}
					+u_1^{m-1}s^m\mu(y)\partial_{y}
					-s\partial_s
					\right]}_{\psi}
					\end{eqnarray*}
Since $b^*\partial_m|_{U_1}$ is $p$-closed, so is $\psi$. As $\psi(u_1)=u_1$, by \autoref{lemma:p_power_of_derivation} we obtain that $\psi^{[p]}=\psi$. Thus $b^*\sF_m|_{U_1}=\sO_{U_1}\cdot \psi$ is lc by \autoref{prop:mult_is_lc}. Moreover, the above computations show that $E$ is $b^*\sF_m$-invariant and that $a(E;\sF_m)=-1$. The situation on the other $x_i$-patches $U_i$ is similar.
			
On the $t$-patch $V$ the blow-up $b$ is given by 								\begin{eqnarray*}
				b^*\colon && k[x_1,\dots,x_n,y,t][\mu(y)^{-1}]\longrightarrow k[v_1,\dots,v_n,y,s][\mu(y)^{-1}], \\
				&& x_i\mapsto sv_i \ (1\leq i\leq n), \quad
				y\mapsto y, \quad
				t\mapsto s.
				\end{eqnarray*}
			Therefore we have:
				\begin{eqnarray*}
				b^*\partial_m|_V &=& 
				\sum_{i=1}^n v_i^2s \partial_{v_i} + s^m\mu(y)\partial_{y} \\
				&=& s\cdot \underbrace{\left[
				\sum_{i=1}^{n}v_i^2\partial_{v_i}+s^{m-1}\mu(y)\partial_{y}
				\right]}_{\xi}
				\end{eqnarray*}
Under the description $V\cong D_{\bA^{n+1}_{v_1,\dots,v_n,y}}(\mu(y))\times\bA^1_s$, we see that $\xi$ corresponds to $\partial_{m-1}$ and thus $b^*\sF_m|_V\cong \sF_{m-1}$. To finish the proof, it remains to observe that $\Sing(\xi)=V(v_1,\dots,v_n,s)\subset V$ is in fact a closed subset of $X$, and that the strict transform of $(t=0)$ is disjoint from the patch $V$.
\end{proof}

\begin{theorem}\label{thm:loc_stable_families_I}
With the notations as above, let $Y=(\bA\times\bA^1)/\sF_m$ with $n\geq 2$ and $m\geq 1$. Then:
	\begin{enumerate}
		\item The projection $\bA\times \bA^1_t\to\bA^1_t$ factors through a flat morphism $Y\to \bA^1_t$ of relative dimension $n+1$;
		\item $Y_t$ is smooth for $t\neq 0$, while $Y_0$ is reduced and non-$S_2$;
		\item If $n+1\geq \max\{p,3\}$, the family $Y\to \bA^1$ is locally stable.
	\end{enumerate}
\end{theorem}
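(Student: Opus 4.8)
The three assertions can be handled in turn with the quotient machinery already in place. For (1), the derivation $\partial_m$ has no $\partial_t$-component and $t$ is $\partial_m$-invariant, so $\partial_m\in T_{(\bA\times\bA^1)/\bA^1}$ and the quotient $q\colon\bA\times\bA^1\to Y$ commutes with the projection to $\bA^1_t$, yielding a morphism $Y\to\bA^1_t$; since $Y$ is normal (hence integral) and dominates $\bA^1$, this morphism is torsion-free over the smooth curve $\bA^1$, hence flat, of relative dimension $(n+2)-1=n+1$. For the smoothness of $Y_t$ with $t\neq0$ in (2): over $t=\alpha\neq0$ the foliation is generated by $\sum_ix_i^2\partial_{x_i}+\alpha^m\mu(y)\partial_y$, whose $\partial_y$-coefficient $\alpha^m\mu(y)$ is a unit on $\bA$, so the generator is nowhere vanishing and $\sF_m$ is regular in a neighbourhood of $X_\alpha$; by \autoref{thm:family_of_quotients}(3) the comparison map $\varphi_\alpha$ is an isomorphism, and the quotient of a smooth variety by a regular $1$-foliation is smooth, so $Y_\alpha$ is smooth.

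The special fibre requires more care. Reducedness splits into $S_1$ and generic reducedness: $Y$ is normal, hence $S_2$, and $t$ is a nonzerodivisor on $\sO_Y$, so $Y_0=V(t)$ is $S_1$; and since $X_0/\sF_m|_{X_0}\cong\big(\bA^n/\langle\sum_i x_i^2\partial_{x_i}\rangle\big)\times D(\mu(y))$ is reduced and maps finitely onto $Y_0$ via $\varphi_0$, an isomorphism over the generic point of $Y_0$, the fibre $Y_0$ is generically reduced, hence reduced. The non-$S_2$ property is the \emph{crux}. By \autoref{thm:family_of_quotients}(2) it suffices to show that $\varphi_0$ is not an isomorphism, i.e. that the reduction map $\sO_Y/t\to(R/t)^{\bar\partial}$ is not surjective, where $R=k[x_1,\dots,x_n,y,t][\mu^{-1}]$ and $\bar\partial=\partial_m\bmod t=\sum_ix_i^2\partial_{x_i}$. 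I will show the invariant $y\in(R/t)^{\bar\partial}$ does not lift: any lift would be $s=y+tr$ with $\partial_m(s)=0$, whence dividing by $t$ gives $\partial_m(r)=-t^{m-1}\mu$, and expanding $r=\sum_jr_jt^j$ with $r_j\in k[x,y,\mu^{-1}]$, the coefficient of $t^{m-1}$ forces $\sum_ix_i^2\partial_{x_i}(r_{m-1})=-\mu(y)$. This is impossible, because the left-hand side lies in the ideal $(x_1,\dots,x_n)$ whereas $\mu(y)\neq0$ does not. Hence $\varphi_0$ is not an isomorphism and $Y_0$ is non-$S_2$.

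For (3), since $Y$ is smooth away from $Y_0=V(t)\supseteq q(\Sing\sF_m)$, the pairs $(Y,Y_\alpha)$ with $\alpha\neq0$ are snc and have nonnegative discrepancies, so only divisors centred over $Y_0$ matter for $(Y,Y_0)$. The plan is to iterate the blow-up of \autoref{lemma:blowup_to_reduce_degree} $m$ times: each step makes the foliation lc on the $x_i$-patches and replaces $\sF_m$ by $\sF_{m-1}$ on the $t$-patch, so after $m$ steps we reach $b\colon X\to\bA\times\bA^1$ with $b^*\sF_m$ lc everywhere (regular on the last $t$-patch). With $Z=X/b^*\sF_m$ and $\fb\colon Z\to Y$, each exceptional $E_i$ is invariant with $a(E_i;\sF_m)=-1$, has codimension-$(n+1)$ smooth centre (so $a(E_i;\bA\times\bA^1)\geq n$ by \autoref{lemma:discrepancy_over_reg}), and satisfies $v_{E_i}(t)=1$. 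Combining \autoref{prop:log_adjunction}, the identity $q^*(K_Y+Y_0)=K_{\bA\times\bA^1}+X_0+(p-1)K_{\sF_m}$, and pullback along $b$, the crepant transform of $(Y,Y_0)$ upstairs is
\[ K_X+\Big(X_0'+\textstyle\sum_i\big(p-a(E_i;\bA\times\bA^1)\big)E_i\Big)+(p-1)K_{b^*\sF_m}, \]
where $X_0'$ is the strict transform of $(t=0)$. Its coefficients are $\leq1$: that of $X_0'$ is $1$, and that of $E_i$ is $p-a(E_i;\bA\times\bA^1)\leq p-n\leq1$ exactly when $n\geq p-1$ (the content of $n+1\geq p$). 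Since the later blow-ups avoid $X_0'$ by \autoref{lemma:blowup_to_reduce_degree}(5), the divisor $X_0'\cup\bigcup_iE_i$ is snc, so this sub-pair is sub-lc by \cite[Corollary 2.11]{Kollar_Singularities_of_the_minimal_model_program}; then \autoref{thm:sing_qt}(1), applied to the lc foliation $b^*\sF_m$, shows the sub-pair on $Z$ and hence $(Y,Y_0)$ is lc. The condition $n\geq2$, the other half of $n+1\geq\max\{p,3\}$, is what guarantees $\sF_m$ is a family of $1$-foliations (\autoref{claim:F_m_family_of_foliations}) and that the fibres are regular in codimension one.

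The main obstacle is twofold. Conceptually, the non-$S_2$-ness of $Y_0$ is the delicate point: the $\mathfrak m$-primary criterion that produced the non-$S_3$ point in \autoref{thm:non_S3_qt} is unavailable here, since $\partial_m$ degenerates along the entire $y$-axis at $t=0$, and the explicit obstruction to lifting $y$ is what takes its place. Technically, the bookkeeping of the iterated blow-up in (3) — verifying that $b^*\sF_m$ becomes globally lc, that $a(E_i;\bA\times\bA^1)\geq n$ and $v_{E_i}(t)=1$ for every $i$, and that the total transform of $(t=0)$ stays snc together with the exceptional locus — is the most laborious part, even though each ingredient is furnished by \autoref{lemma:blowup_to_reduce_degree}.
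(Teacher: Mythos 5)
Your overall strategy is the paper's: (1) and the smoothness/reducedness parts of (2) are handled exactly as in the paper, and for (3) you follow the same plan of iterating \autoref{lemma:blowup_to_reduce_degree}, passing to the quotient $Z=X/b^*\sF_m$, and transferring discrepancies via \autoref{thm:sing_qt} and \cite[Corollary 2.11]{Kollar_Singularities_of_the_minimal_model_program}. The one place you genuinely deviate is the non-$S_2$ claim in (2), and there your argument is correct and a little cleaner: the paper first reduces to $m=1$ by base-changing along $t\mapsto t^m$ (via \autoref{lemma:pullback_relative_foliations}) and then works in the local ring at the origin, whereas your expansion $r=\sum_j r_jt^j$ and comparison of the coefficient of $t^{m-1}$ handles all $m\geq 1$ at once, since $R$ is a polynomial ring in $t$ over $k[x,y][\mu^{-1}]$.

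Part (3), however, contains a genuine error: the claim that every exceptional divisor satisfies $a(E_i;\sF_m)=-1$. This holds only for $i=1$; the correct value is $a(E_i;\sF_m)=-i$, as in the paper. The $-1$ produced by \autoref{lemma:blowup_to_reduce_degree} is the discrepancy of $E_i$ over the \emph{pulled-back} foliation $\sF_{m-i+1}$ living on the previous model, not over $\sF_m$ on $W=\bA\times\bA^1$. Since the exceptional divisor of the $(i-1)$-st blow-up passes through the center of the $i$-th one with multiplicity $1$ (item (5) of \autoref{lemma:blowup_to_reduce_degree}), foliated discrepancies accumulate: $a(E_i;\sF_m)=a(E_i;b_{i-1}^*\sF_m)+a(E_{i-1};\sF_m)=-1-(i-1)=-i$. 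Consequently your displayed coefficient $p-a(E_i;\bA\times\bA^1)$ of $E_i$ in the crepant transform is wrong for $i\geq 2$: using $a(E_i;W)=in$, $v_{E_i}(t)=1$ and $a(E_i;\sF_m)=-i$, the correct coefficient is $1-a(E_i;W)+i(p-1)=1-i(n-p+1)$, which exceeds your value by $(i-1)(p-1)$. This matters logically: your coefficients \emph{underestimate} the true ones, so checking that yours are $\leq 1$ does not establish the bound actually needed for \cite[Corollary 2.11]{Kollar_Singularities_of_the_minimal_model_program}. The conclusion survives only because the corrected coefficients $1-i(n-p+1)$ are again $\leq 1$ for all $i\geq 1$ exactly when $n\geq p-1$, i.e.\ $n+1\geq p$ --- this is the paper's computation, and your proof of (3) needs it substituted in.
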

The hypothesis $n\geq 2$ ensures that the $1$-foliation $\sF_m$ it is a family of $1$-foliations over $\bA^1_t$, see \autoref{claim:F_m_family_of_foliations}. The case $(p=2,n=1)$ is considered in \autoref{example:extreme_case_I} below. 
\begin{proof}
Since $\partial_m(t)=0$ we have $k[t]\subset k[x_1,\dots,x_n,y,t]^{\partial_m}$: this proves the first point. On the principal open set $D(t)\subset \bA\times \bA^1$ the $1$-foliation $\sF_m|_{D(t)}$ is regular. Thus for $0\neq \alpha\in k$, by \autoref{thm:family_of_quotients} we have
		$$Y_\alpha=\bA/\sF_{m,\alpha} \quad\text{for}\quad
		\sF_{m,\alpha} = \sO_{\bA}\cdot \left(
		\sum_{i=1}^{n}x_i^2\partial_{x_i} + \alpha^m\mu(\alpha)\partial_{y}
		\right).$$
As $\sF_{m,\alpha}$ is regular for $\alpha\neq 0$ we obtain that $Y_\alpha$ is smooth \cite[Lemma 2.5.10]{Posva_Singularities_quotients_by_1_foliations}. 

Let us prove that $Y_0$ is not $S_2$. For this we reduce to $m=1$. Indeed, consider the cartesian diagram
		$$\begin{tikzcd}
		\bA\times\bA^1_t \arrow[r, "v"] \arrow[d] & \bA\times \bA^1_{t'}\arrow[d] \\
		\bA^1_t\arrow[r, "u"] & \bA^1_{t'}
		\end{tikzcd}$$
where the vertical arrows are the projections and $u\colon \bA^1_t\to\bA^1_{t'}$ is given by $u^*(t')=t^m$. By \autoref{lemma:pullback_relative_foliations} we have $v^*\sF_1=\sF_m$ (where $\sF_1$ is defined as in \autoref{eqn:family_quadratic_derivation} using $t'$ instead of $t$) and an $\bA^1_t$-isomorphism
		$$(\bA\times \bA^1_{t'})/\sF_1 \times_{\bA^1_{t'}}\bA^1_t
		\cong Y.$$
Hence it suffices to show that $Y_0$ is non-$S_2$ in the case $m=1$. 
As $\sF_1$ is a family of $1$-foliations over $\bA^1_t$, by \autoref{thm:family_of_quotients} it is sufficient to prove that
		$$\sO_{\bA\times \bA^1,\bold{0}}^{\partial_1}
		\otimes_{k[t]} k[t]/(t)
		\subsetneqq
		\left(\sO_{\bA,\bold{0}}/(t)\right)^{\bar{\partial}}
		\quad \text{with}\quad 
		\bar{\partial}=\sum_{i=1}^{n}x_i^2\partial_{x_i},$$
where $\bold{0}=V(x_1,\dots,x_n,y,t)$.
Clearly $\bar{\partial}(y)=0$, so it suffices to show that there is no $f\in \sO_{\bA\times \bA^1,\bold{0}}$
such that $\partial_1(y+tf)=0$. If there was such an $f$, we would have
		$$0=\partial_1(y+tf)=t\mu +\sum_{i=1}^n tx_i^2\partial_{x_i}(f)+t^2\mu \partial_y(f).$$
Taking in account that $t$ is not a zero-divisor in $\sO_{\bA\times \bA^1,\bold{0}}$, we would obtain
		$$-\mu = \sum_{i=1}^nx_i^2\partial_{x_i}(f)+t\mu \partial_y(f).$$
However the right-hand side belongs to the prime ideal $(x_1,\dots,x_{n},t)$, so this equality cannot hold in the local ring $\sO_{\bA\times \bA^1,\bold{0}}$ by our choice of $\mu$. We have reached a contradiction, and hence $Y_0$ is not $S_2$ at the image of $\bold{0}$.

On the other hand, I claim that $Y_0$ is (geometrically) reduced. Since $Y$ is $S_2$, the Cartier divisor $Y_0$ is $S_1$ and it is sufficient to show that $Y_0$ is generically reduced. Observe that $Y_0$ is irreducible. The $1$-foliation $\sF_m$ is regular on the open set $U=D(x_1)$, which intersects $Y_0$ non-trivially. By \autoref{thm:family_of_quotients} it follows that the open subset $(U/{\sF_m}|_U)_0$ of $Y_0$ is the quotient of $U_0$ by (the restriction of) $\sF_{m,0}$. In particular it is normal, so $Y_0$ is generically reduced.

It remains to show that $(Y,Y_0)$ is lc (in the general case $m\geq 1$). For this, as in the proof of \autoref{thm:non_S3_qt}, we blow-up $\bA\times\bA^1$ until the pullback of $\sF_m$ has only lc singularities. By \autoref{lemma:blowup_to_reduce_degree}, this produces a sequence of $m$ smooth blow-ups
		$$X_m\longrightarrow X_{m-1}\longrightarrow \cdots
		\longrightarrow X_1\longrightarrow W=\bA\times\bA^1$$
such that, if $b_m\colon X_m\to W$ is the composition of the blow-ups, the $1$-foliation $b_m^*\sF_m$ is lc. Let $E_j\subset X_m$ be the strict transform of the exceptional divisor of $X_j\to X_{j-1}$ (for $1\leq j\leq m$), and denote $(t=0)\subset W$ by $W_0$. We have $a(E_1;W,W_0)=n-1$, and using \autoref{lemma:blowup_to_reduce_degree} we compute
		\begin{eqnarray*}
		a(E_2;W,W_0)&=&a(E_2;X_1, (X_1\to W)^{-1}_*W_0 + (1-n)E_1)\\
		&=& 2n-1.
		\end{eqnarray*}
Continuing by induction, we find
		$$a(E_j;W,W_0)=jn-1, \quad j=1,\dots,m.$$
Now let $Z=X_m/b_m^*\sF_m$ and consider the induced commutative diagram
		$$\begin{tikzcd}
		X_m\arrow[d, "b_m"] \arrow[r] & Z\arrow[d, "\fb"]\\
		\bA\times\bA^1\arrow[r] & Y
		\end{tikzcd}$$
where the horizontal arrows are the quotient morphisms and where $\fb\colon Z\to Y$ is birational. Let $F_j\subset Z$ be the prime divisor which is the image of $E_j$. Using \autoref{lemma:blowup_to_reduce_degree} we get
		$$a(E_j;\sF_m)=-j, \quad j=1,\dots,m.$$
Now we use \autoref{thm:sing_qt}: since $W_0$ is $\sF_m$-invariant we find
		\begin{eqnarray*}
		a(F_j;Y,Y_0) &=& a(E_j; W,W_0)+(p-1)\cdot a(E_j; \sF_m) \\
		&=& j(n-p+1)-1.
		\end{eqnarray*}
Therefore we have the crepant equation
		$$K_Z+\fb^{-1}_*Y_0-\sum_{j=1}^m(j(n-p+1)-1)F_j
		=\fb^*(K_Y+Y_0).$$
To show that $(Y,Y_0)$ is lc it suffices to show that 
	$$\discrep\left(Z,\fb^{-1}_*Y_0-\sum_{j=1}^m(j(n-p+1)-1)F_j\right)\geq -1.$$ 
Since $b_m^*\sF_m$ is lc, by \autoref{thm:sing_qt} it suffices to show that 
	$$\discrep\left(X_m,(b_m)^{-1}_*W_0-\sum_{j=1}^m(j(n-p+1)-1)E_j\right)\geq -1.$$ 
As $(X_m,(b_m^{-1})_*W_0+\sum_{j=1}^mE_j)$ is log smooth, this holds by \cite[Corollary 2.11]{Kollar_Singularities_of_the_minimal_model_program} as soon as $1-j(n-p+1)\leq 1$ for every $j\geq 1$. This is the case when $n+1\geq p$. The proof is complete.
\end{proof}

\begin{remark}
The normalization $Y_0^\nu$ of the $n+1$-dimensional central fiber $Y_0$ in \autoref{thm:loc_stable_families_I} is isomorphic to the product of $\bA^1$ with the $n$-dimensional quotient singularity of \autoref{thm:non_S3_qt}. In particular, it is $\bQ$-factorial, $S_3$ but non-$S_4$, non-$F$-injective, with a singular locus of dimension one. Since $n+1\geq \max\{p,3\}$ we have $n\geq p-1$ and so $Y_0^\nu$ is lc. If $n\geq p$ (resp.\ $n>p$), it is even canonical (resp.\ terminal).
\end{remark}

\begin{corollary}\label{cor:canonical_non_S3}
Let $Y=\bA^{n+2}_{x_1,\dots,x_{n},y,t}/\langle \sum_{i=1}^nx_i^2\partial_{x_i}+t\partial_y\rangle$ with $n\geq 2$. If $\dim Y=n+2\geq p+1$, then $Y$ has non-$S_3$ canonical singularities along the image of $V(x_1,\dots,x_n,t)$.
\end{corollary}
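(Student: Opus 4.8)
The plan is to treat this as the special case $m=1$, $\mu\equiv 1$, $\bA=\bA^{n+2}$ of the construction of \autoref{thm:loc_stable_families_I}, so that the blow-up analysis of \autoref{lemma:blowup_to_reduce_degree} applies verbatim. As in \autoref{claim:p-closed} one has $\partial^{[p]}=0$, hence $\sF=\sO\cdot\partial$ is a rank-one $1$-foliation and $Y=\bA^{n+2}/\sF$ is normal and $\bQ$-factorial of dimension $n+2$, with $\Sing(\sF)=V(x_1,\dots,x_n,t)=:L$ the $y$-axis. By \cite[Lemma 2.5.10]{Posva_Singularities_quotients_by_1_foliations} the quotient $Y$ is regular away from the image of $L$; so, being normal, $Y$ is $S_2$, and it remains to prove that $Y$ is non-$S_3$ along $q(L)$ and that $Y$ is canonical.

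For the first statement I would localise at the generic point $\eta_L$ of $L$. The local ring $A=\sO_{\bA^{n+2},\eta_L}$ is regular of dimension $n+1\geq 3$ (this is where the hypothesis $n\geq 2$ enters), and the ideal generated by the image of $\partial$ equals $(x_1^2,\dots,x_n^2,t)$ --- note that $t=\partial(y)$ lies in it --- which is primary to the maximal ideal $(x_1,\dots,x_n,t)$ of $A$. Since $\partial^{[p]}=0$, the criterion \cite[Remark 2.5.4]{Posva_Singularities_quotients_by_1_foliations} then shows that $\sO_{Y,q(\eta_L)}=A^{\partial}$ is not $S_3$. As Serre's condition $S_3$ passes to localisations, its failure at $\eta_L$ propagates to every point of $q(L)$, which gives the non-$S_3$ statement along the image of $L$.

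For canonicity I would mirror the discrepancy computation of \autoref{thm:non_S3_qt}. A single blow-up $b\colon X=\Bl_L\bA^{n+2}\to\bA^{n+2}$ already works: by \autoref{lemma:blowup_to_reduce_degree} (case $m=1$) the foliation $b^*\sF$ is lc, its exceptional divisor $E$ is $b^*\sF$-invariant with $a(E;\sF)=-1$, while $a(E;\bA^{n+2})=\codim_{\bA^{n+2}}(L)-1=n$. Forming $Z=X/b^*\sF$ with $\fb\colon Z\to Y$ and $F=q'(E)$, \autoref{thm:sing_qt} gives $a(F;Y)=n-p+1\geq 0$ and the crepant relation $\fb^*K_Y=K_Z-(n-p+1)F$. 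For an arbitrary exceptional divisor $G$ over $Y$ one then reduces, exactly as in \autoref{thm:non_S3_qt}, to studying a divisor $E'$ on a normalised model $f\colon X'\to X$ through the identity
$$\tfrac{1}{\delta_{E'}}\,a(G;Y)=a(E';X)+(n+1-p)\,\mult_{E'}(f^*E)+(p-1)\,a(E';b^*\sF),\qquad \delta_{E'}\in\{1,p^{-1}\}.$$
The cases in which $c_X(E')$ is not contained in $\Sing(b^*\sF)$ are disposed of as there: if the center avoids $q(L)$ then $Y$ is regular and $a(G;Y)>0$, and if it lies over $L$ but off $\Sing(b^*\sF)$ then $b^*\sF$ is regular hence canonical along it by \cite[Lemma 3.0.3]{Posva_Singularities_quotients_by_1_foliations}, so $a(G;Y)\geq 0$. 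This leaves the case $c_X(E')\subset\Sing(b^*\sF)$.

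The one genuinely new point --- and the main obstacle --- is the estimate of $a(E';X)$ in that last case, because here $\Sing(b^*\sF)$ is positive-dimensional, rather than a finite set of points as in \autoref{thm:non_S3_qt}. From the explicit generator $\psi$ of \autoref{lemma:blowup_to_reduce_degree} I would check that on the $x_i$-charts $\Sing(b^*\sF)$ is the union of the lines $\{u_1=s=0,\ u_j\in\{0,1\}\}$ contained in $E$, and that it is empty on the $t$-chart (where $b^*\sF\cong\sF_0$ is regular); hence $\Sing(b^*\sF)$ has codimension $n+1$ in $X$ and sits inside $E$. Then \autoref{lemma:discrepancy_over_reg} yields $a(E';X)\geq n$, while $\mult_{E'}(f^*E)\geq 1$ (since $\Sing(b^*\sF)\subset E$) and $a(E';b^*\sF)\geq -1$ (since $b^*\sF$ is lc), so
$$\tfrac{1}{\delta_{E'}}\,a(G;Y)\geq n+(n+1-p)+(p-1)(-1)=2(n+1-p)\geq 0,$$
using $n+2\geq p+1$. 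This gives $a(G;Y)\geq 0$ for every $G$, so $Y$ is canonical. I expect the codimension count for $\Sing(b^*\sF)$ and the verification $\Sing(b^*\sF)\subset E$ to be the only delicate steps; everything else is a direct transcription of the two proofs above.
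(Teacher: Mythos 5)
Your proof is correct, but it follows a genuinely different route from the paper's, which treats the corollary as an immediate consequence of \autoref{thm:loc_stable_families_I}: there, the computations already show that the central fiber $Y_0$ of $Y\to\bA^1_t$ fails $S_2$ along the image of $V(x_1,\dots,x_n,t)$, and since $Y_0$ is an effective Cartier divisor this forces $Y$ to fail $S_3$ along that curve; for canonicity the paper uses that $(Y,Y_0)$ is lc (local stability, again from \autoref{thm:loc_stable_families_I}) together with the identity $a(E;Y)=a(E;Y,Y_0)+\mult_E Y_0\geq -1+1=0$ for every divisor $E$ with $c_Y(E)\subseteq Y_0$, divisors whose center meets the regular open set $Y\setminus Y_0$ being harmless. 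You bypass \autoref{thm:loc_stable_families_I} altogether: for the non-$S_3$ statement you localize at the generic point of $L=V(x_1,\dots,x_n,t)$ --- a good move, since the image ideal $(x_1^2,\dots,x_n^2,t)$ of $\partial$ is \emph{not} primary to the maximal ideal at any closed point of $L$, but becomes $\fm$-primary exactly after this localization --- and then spread the failure to all of $q(L)$ using that Serre's conditions pass to localizations; for canonicity you rerun the discrepancy computation of \autoref{thm:non_S3_qt}, and your new input (that $\Sing(b^*\sF)$ is a union of lines of codimension $n+1$ contained in $E$ and avoiding the $t$-chart) is verified correctly and, combined with \autoref{lemma:discrepancy_over_reg} and \autoref{thm:sing_qt}, yields the bound $\tfrac{1}{\delta_{E'}}a(G;Y)\geq 2(n+1-p)\geq 0$. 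The paper's route buys brevity: all the hard work is concentrated once in \autoref{thm:loc_stable_families_I}, and the Cartier-divisor identities convert it into both conclusions. Your route buys independence from the family-theoretic machinery (you only need \autoref{lemma:blowup_to_reduce_degree}) and pinpoints the depth failure at the generic point, which is slightly sharper information. The one point you should shore up: you invoke \cite[Remark 2.5.4]{Posva_Singularities_quotients_by_1_foliations} at a non-closed point, i.e.\ for a regular local ring of dimension $n+1\geq 3$ with residue field $k(y)$, whereas the paper only ever applies it at closed points over the algebraically closed ground field. The depth/local-cohomology argument behind such criteria is insensitive to the residue field, so the extension is almost certainly valid, but you should either check that the cited remark is stated for arbitrary regular local rings or add a line noting that its proof applies verbatim in your setting.
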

\begin{proof}
The computations in the proof of \autoref{thm:loc_stable_families_I} show that the central fiber $Y_0$ of $Y\to \bA^1_t$ is not $S_2$ along the image of $V(x_1,\dots,x_n,t)$. Thus $Y$ is non-$S_3$ along that curve. To show that $Y$ is canonical, as the open set $Y\setminus Y_0$ is regular and thus terminal, it suffices to show that $a(E;Y)\geq 0$ for every divisor $E$ over $Y$ with $c_Y(E)\subseteq Y_0$. As $Y_0$ is an effective Cartier divisor and $(Y,Y_0)$ is lc we have 
	$$a(E;Y)=a(E;Y,Y_0)+\mult_{E}Y_0\geq -1+1=0$$
and we are done.
\end{proof}

\begin{example}\label{example:extreme_case_I}
Suppose that $k$ has characteristic $p=2$, and consider the derivation $\partial=x^2\partial_x+t\partial_y$ on $\bA^3_{x,y,t}$. Then $\partial^{[2]}=0$ and that $X=\bA^3/\partial$ is a flat $\bA^1_t$-scheme given by the spectrum of $k[x^2,y^2,t,tx+x^2y]$: indeed, the latter ring is clearly $\partial$-invariant, is normal and has degree $4$ over $k[x^2,y^2,t^2]$.

It is easily seen that $X$ is isomorphic to an hypersurface in $\bA^4$, and thus $X$ is Cohen--Macaulay. Its central fiber $X_0=\Spec k[x^2,y^2,x^2y]\cong V(W^2-X^2Y)\subset \bA^3_{X,Y,W}$ is demi-normal but not normal: it has a nodal singularity along the curve $(x=0)$. In fact, this is an inseparable node \cite[\S 3.1]{Posva_Gluing_for_surfaces_and_threefolds}, and thus $X_0$ is semi-normal but not weakly normal \cite[Lemma 2.3.7]{Posva_Gluing_surfaces_mixed_char}. Simple computations in the spirit of the proof of \autoref{thm:loc_stable_families_I} show that the pair $(X,X_0)$ is lc but not plt, as the image of the line $(x=0=t)$ in $X$ is the singular locus of $X_0$ and an lc center of $(X,X_0)$. Compare also with the general semi-normality results of \cite{Arvidsson_Posva_Normality_min_lc_centers_of_3folds}.
\end{example}


\autoref{thm:loc_stable_families_I} gives a series of local examples. We can compactify them as explained in the next theorem to obtain global ones.

\begin{theorem}\label{thm:stable_families}
For each $p>0$, there exists a projective family $(\sY,\sB=\frac{1}{p}\sH)\to \bA^1$ of relative dimension $N\geq \max\{p,3\}$ such that:
	\begin{enumerate}
		\item\label{item:a} $(\sY,\frac{1}{p}\sH)\to \bA^1$ is a stable family,
		\item\label{item:b} $\sY_t$ has only $\mu_p$-quotient singularities for $t\neq 0$, and
		\item\label{item:c} $\sY_0$ is not $S_2$.
	\end{enumerate}
Moreover:
	\begin{itemize}
		\item There exists a finite purely inseparable morphism $\bP^{N-1}\times E\times \bA^1\to \sY$ of degree $p$ over $\bA^1$, where $E$ is a supersingular elliptic curve, and $\sH$ is the image of a $\bQ$-divisor $H_0\times \bA^1\subset \bP^{N-1}\times E\times \bA^1$;
		\item If $p=2$ then we can modify the construction so that $\sY_t$ is regular for every $t\neq 0$;
		\item If $C$ is a normal curve and $C\to\bA^1$ is a finite flat morphism, then $\sY_C=\sY\times_{\bA^1}C$ is normal and $(\sY_C, \frac{1}{p}\sH_C)\to C$ is stable.
	\end{itemize}
\end{theorem}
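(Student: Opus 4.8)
The plan is to globalize the affine construction of \autoref{thm:loc_stable_families_I} by replacing its two factors with proper varieties that carry the same infinitesimal symmetries. In the $\bA^n_{x_1,\dots,x_n}$-direction I would keep projective space $\bP^{N-1}$ with $N-1=n$: although the quadratic field $\delta=\sum_{i=1}^n x_i^2\partial_{x_i}$ does not extend to a regular vector field on $\bP^{N-1}$, it does define a saturated rank-one subsheaf of $T_{\bP^{N-1}}$ (a twisted vector field, a section of $T_{\bP^{N-1}}(1)$), hence a $1$-foliation $\sG$ with $K_\sG=H$. In the $y$-direction I would replace $\bA^1_y$ by a supersingular elliptic curve $E$: its invariant vector field $\partial_E$ is nowhere vanishing and globally defined, and it satisfies $\partial_E^{[p]}=0$ exactly because $E$ is supersingular (the Hasse invariant vanishes), so it globalizes the local field $\mu(y)\partial_y$ of \autoref{eqn:family_quadratic_derivation}. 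On $P=\bP^{N-1}\times E\times\bA^1_t$ I would form the relative $1$-foliation $\sF\subset T_{P/\bA^1}$ generated on the standard chart $\bA^n\times E$ by $D=\delta+t^m\partial_E$. As $\delta$ and $\partial_E$ commute and are $p$-closed with vanishing $p$-th power, $D^{[p]}=0$ (as in \autoref{claim:p-closed}), and the saturation argument of \autoref{claim:F_m_family_of_foliations} shows that $\sF$ is a family of $1$-foliations over $\bA^1_t$ as soon as $n\geq 2$. Setting $\sY=P/\sF$, the quotient $q\colon P\to\sY$ is finite, purely inseparable of degree $p$ over $\bA^1$, which yields the first bullet of the \textit{moreover}.

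For $t=\alpha\neq 0$ the generator $\delta+\alpha^m\partial_E$ is nowhere zero on the chart $\bA^n\times E$ (because $\partial_E$ is), so $\sF_\alpha$ can only be singular over the singular locus of $\sG$ at infinity; there, after clearing the pole of $\delta$ by rescaling with the local equation $w$ of the hyperplane at infinity, the saturated generator has the form $w\cdot(\delta+\alpha^m\partial_E)$, and \autoref{lemma:p_power_of_derivation} shows that its $p$-th power equals a unit multiple of itself, i.e.\ a multiplicative singularity. Thus $\sF_\alpha$ is lc with only multiplicative singularities by \autoref{prop:mult_is_lc}, and $\sY_\alpha$ has only $\mu_p$-quotient singularities by \cite[Proposition 4.1.1]{Posva_Singularities_quotients_by_1_foliations} and \autoref{thm:family_of_quotients}. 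At $t=0$ the generator degenerates to $\delta$, carrying the additive quadratic singularity of \autoref{thm:non_S3_qt} along $\{\mathbf{0}\}\times E$; repeating the computation of \autoref{thm:loc_stable_families_I} through the $S_2$-criterion of \autoref{thm:family_of_quotients} shows that $\sY_0$ is reduced but non-$S_2$. For $p=2$ the multiplicative singularities at infinity are of type $\tfrac12(1,\dots,1)$, and I expect a mild modification of $\sG$ near infinity (making the relevant $\mu_2$-action free) to render them regular, giving the second bullet.

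To prove stability I would argue as follows. Properness is immediate, $\sY$ being a finite quotient of the proper $P\to\bA^1$. For local stability, i.e.\ that $(\sY,\tfrac1p\sH+\sY_\alpha)$ is lc for every $\alpha$, I would resolve $\sF$ by a global sequence of smooth blow-ups in the spirit of \autoref{lemma:blowup_to_reduce_degree} (one blow-up to simplify the quadratic singularity along $\{\mathbf{0}\}\times E$ as in \autoref{thm:non_S3_qt}, and $m$ further blow-ups for the $t$-degeneration), making the pullback of $\sF$ lc, and then transport the conclusion downstairs through the discrepancy formula of \autoref{thm:sing_qt} and the adjunction of \autoref{prop:log_adjunction}, exactly as in \autoref{thm:loc_stable_families_I}. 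The discrepancy over the origin is again governed by $N-p$, so local stability holds precisely when $N\geq p$; combined with $n\geq 2$ this yields $N\geq\max\{p,3\}$. For the ampleness of $K_\sY+\tfrac1p\sH$, I would choose $H_0\subset\bP^{N-1}\times E$ to be a non-$\sF$-invariant very ample $\bQ$-divisor of large degree, put $\sH=q(H_0\times\bA^1)$, and test after pullback: by \autoref{prop:log_adjunction}, $q^*(K_\sY+\tfrac1p\sH)=K_P+(H_0\times\bA^1)+(p-1)K_\sF$. Since on a fiber $K_P=-NH$ and $K_\sF=H$, the part $K_P+(p-1)K_\sF=(p-1-N)H$ is anti-ample once $N\geq p$, so the boundary is indispensable: taking $H_0$ ample enough makes the total relatively ample, and finiteness of $q$ then forces $K_\sY+\tfrac1p\sH$ to be $f$-ample. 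The coefficient of $\sH$ is thereby pinned to the small value $\tfrac1p$ while its degree and number of components remain free, as recorded in \autoref{rmk:boundary_divisor}.

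Finally, for a finite flat morphism $C\to\bA^1$ from a normal curve $C$, I would invoke \autoref{lemma:pullback_relative_foliations}: the pullback $\sF_C$ on $P_C=\bP^{N-1}\times E\times C$ satisfies $\sY_C=P_C/\sF_C$. As $P_C$ is normal and a $1$-foliation quotient of a normal variety is normal, $\sY_C$ is normal; and since \autoref{lemma:pullback_relative_foliations} identifies the fibers of $\sY_C\to C$ with quotients of the \emph{same} foliations, both the fiberwise singularity analysis and the discrepancy computations persist, so $(\sY_C,\tfrac1p\sH_C)\to C$ is stable. The main obstacle I anticipate is precisely the global local-stability step: in contrast to \autoref{thm:loc_stable_families_I}, the foliation now has a genuine singular locus at infinity (the multiplicative curves lying over $\Sing(\sG)$), so one must resolve $\sF$ and verify the lc bound \emph{everywhere}, not just along $\{\mathbf{0}\}\times E$, all while keeping the boundary $\sH$ ample and of coefficient $\leq 1$. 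Checking that the resolving blow-ups are compatible with the product structure over $E$, and that the discrepancies introduced at infinity respect the lc bound, is the delicate part of the argument.
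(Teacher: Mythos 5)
Your construction is the paper's construction: the quotient of $W=\bP^{N-1}\times E\times\bA^1_t$ (with $E$ supersingular) by the relative $1$-foliation generated by $\sum_ix_i^2\partial_{x_i}+t^m\omega$, the chart computation at infinity showing that the singularities there are multiplicative (\autoref{lemma:global_foliation}), the reduction of non-$S_2$-ness of $\sY_0$ to \autoref{thm:loc_stable_families_I} by localizing along $\{[1:0:\dots:0]\}\times E\times\{0\}$, local stability via the blow-up sequence of \autoref{lemma:blowup_to_reduce_degree} combined with \autoref{thm:sing_qt}, and ampleness via \autoref{prop:log_adjunction} with a non-invariant boundary of coefficient $\frac{1}{p}$. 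Two conditions on $H_0$ are left implicit in your sketch but are actually used in the lc computation, and are part of the paper's choice of boundary: the support of $H=H_0\times\bA^1$ must be in general position with respect to the curve $\{[1:0:\dots:0]\}\times E\times\{0\}$ (so that the boundary does not alter the discrepancies $a(E_j;W,W_0)$ along the blow-ups), and $(W,W_\alpha+H)$ must be lc for $\alpha\neq 0$; both hold for a general member of a sufficiently ample linear system.

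The genuine gap is the last bullet, stability after an arbitrary finite flat base change $C\to\bA^1$. You argue that \autoref{lemma:pullback_relative_foliations} identifies the fibers of $\sY_C\to C$ with quotients of ``the same'' foliations, so that ``the discrepancy computations persist''. The fibers are indeed unchanged, which is why \autoref{item:b} and \autoref{item:c} persist; but local stability is a condition on the total-space pair $(\sY_C,(\sY_C)_c+\sB_C)$, not on its fibers, and the pulled-back foliation is \emph{not} the same: if $t$ pulls back to $\nu(s)s^e$ near $c\in C$ (with $\nu$ a unit and $e$ the ramification index), then near the central fiber the foliation is generated by $\sum_ix_i^2\partial_{x_i}+\nu(s)^ms^{em}\omega$, whose non-lc locus needs $em$ blow-ups rather than $m$ to be simplified, and all discrepancies must be recomputed. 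The theorem survives because the bound $a(F_j;\,\cdot\,)=j(N-p)-1\geq-1$ holds for \emph{every} $j\geq 1$ once $N\geq p$, i.e.\ uniformly in the length of the blow-up sequence, and because the unit $\nu(s)$ can be carried through the computation---but this is precisely what has to be proved, and your persistence claim, as justified, is false. The paper handles this by reducing, via \cite[2.15.5]{Kollar_Families_of_varieties_of_general_type} and \cite{Hu_Zong_Base_change_local-stability_positive_char}, to base changes that are iterates of Frobenius, so that the pulled-back derivation is literally $\sum_ix_i^2\partial_{x_i}+t^{p^r}\omega$ and the general-$m$ analysis applies verbatim; its footnote sketches the alternative of redoing \autoref{thm:loc_stable_families_I} for derivations of the form $\sum_ix_i^2\partial_{x_i}+s^m\nu(s)\mu(y)\partial_y$. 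One of these routes must be supplied. A smaller defect: in the $p=2$ bullet, ``making the $\mu_2$-action free'' is not the mechanism, since the action is not modified; by \autoref{rmk:can_we_get_smth_family?} the weighted blow-ups resolving the multiplicative singularities have weights $1$ and $p-1$, hence are ordinary smooth blow-ups exactly when $p=2$, so the paper blows up $\bP^{N-1}$ along smooth centers avoiding $[1:0:\dots:0]$ and takes the quotient of the blown-up product, whose fibers over $t\neq 0$ are then regular.
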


The following lemma will be useful for the proof.

\begin{lemma}\label{lemma:global_foliation}
Consider $E$ a supersingular elliptic curve, $\omega\in H^0(E,T_E)$ a global generator, $\bA^{N-1}_{x_1,\dots,x_{N-1}}\subset \bP^{N-1}$ a standard chart (with $N\geq 3$), $A$ an integral Noetherian $k$-algebra and $a\in A$. Then the derivation
		$$\partial_{(0)}=\sum_{i=1}^{N-1}x_i^2\partial_{x_i} +a\omega \quad
		\text{on} \quad \bA^{N-1}\times E\times \Spec(A)$$
induces a $1$-foliation $\sG$ of rank $1$ on $\bP^{N-1}\times E\times \Spec(A)$ such that:
	\begin{enumerate}
		\item $K_{\sG}=\sO_{\bP^{N-1}}(1)\boxtimes \sO_{E\times\Spec(A)}$, 
		\item $\sG$ has only multiplicative singularities away from $\{[1:0:\dots :0]\}\times E \times V(a)$,
		\item If $A=k[t]$ and $a=t^m$ with $m\geq 1$, then $\sG\subset T_{(\bP^{N-1}\times E\times \bA^1)/\bA^1}$ is a family of $1$-foliations.
	\end{enumerate}	
\end{lemma}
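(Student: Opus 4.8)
The plan is to define $\sG$ as the saturation, inside the relative tangent sheaf $T_{(\bP^{N-1}\times E\times\Spec(A))/\Spec(A)}$, of the line subsheaf generated by the rational vector field $\partial_{(0)}$ (which involves no $\Spec(A)$-directions), and then to extract all three assertions from explicit computations in the standard affine charts $U_j=\{X_j\neq 0\}$ of $\bP^{N-1}$. First I would verify that $\sG$ is genuinely a $1$-foliation: the summands $x_i^2\partial_{x_i}$ commute pairwise and commute with $a\omega$ (since $a\in A$ and $\omega$ is tangent to $E$); each $x_i^2\partial_{x_i}$ is $p$-nilpotent exactly as in \autoref{claim:p-closed}, and $(a\omega)^{[p]}=a^p\omega^{[p]}=0$ because $E$ is \emph{supersingular}, so $\omega^{[p]}=0$. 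Hence $\partial_{(0)}^{[p]}=0$, and a saturated rank-$1$ subsheaf with a $p$-closed generator is automatically a $1$-foliation. This is precisely where supersingularity is essential: for an ordinary $E$ one would have $(a\omega)^{[p]}=a^{p-1}(a\omega)$, which is not a section of $\sO\cdot\partial_{(0)}$, and $\sG$ would fail to be closed under $p$-th powers.

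For \emph{(1)}, on the chart $U_0=\bA^{N-1}$ the coefficients $x_1^2,\dots,x_{N-1}^2$ of $\partial_{(0)}$ have no common factor (this uses $N-1\geq 2$), so $\partial_{(0)}$ is a saturated generator of $\sG|_{U_0}$. A direct coordinate change into a chart $U_j$ (in the spirit of the blow-up computation of \autoref{example:pullback_smooth_blow-up}) shows that, in the coordinate $y_0=X_0/X_j$, one has $\partial_{(0)}=y_0^{-1}\chi$ where $\chi$ is regular on $U_j$ and generically nonvanishing. Thus $\partial_{(0)}$ is a rational section of the invertible sheaf $\sG$ whose divisor is exactly $-H_\infty$, with $H_\infty=\{X_0=0\}$ the hyperplane at infinity, and with trivial contribution in the $E$- and $\Spec(A)$-directions (the transition functions are pulled back from $\bP^{N-1}$). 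Therefore $\sG\cong\sO_{\bP^{N-1}}(-1)\boxtimes\sO_{E\times\Spec(A)}$ and $K_\sG=\sG^\vee=\sO_{\bP^{N-1}}(1)\boxtimes\sO_{E\times\Spec(A)}$.

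For \emph{(2)}, $\Sing(\sG)$ is the vanishing locus of the saturated generator. On $U_0$ this forces $x_1=\dots=x_{N-1}=0$ and $a=0$, i.e.\ exactly $\{[1:0:\dots:0]\}\times E\times V(a)$, so $\sG$ is regular on $U_0$ off this set. On a chart $U_j$ the generator is $\chi$, and the crucial observation is that, although $\partial_{(0)}$ is \emph{additive} ($\partial_{(0)}^{[p]}=0$), its saturated transform $\chi$ is \emph{multiplicative}: a quick computation gives $\chi(y_0)=-y_0$, hence $\chi^{[p]}(y_0)=(-1)^p y_0=-y_0=\chi(y_0)$ in every characteristic, and \autoref{lemma:p_power_of_derivation} then yields $\chi^{[p]}=\chi$. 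So every singularity of $\sG$ on $H_\infty$ is multiplicative (with unit $u=1$), and since the point $[1:0:\dots:0]$ lies only in $U_0$, all singularities off $\{[1:0:\dots:0]\}\times E\times V(a)$ are multiplicative. I expect this step to be the main obstacle: one must recognise that the naively additive quadratic derivation becomes multiplicative after saturating at infinity, run the chart change carefully enough to produce $\chi$, and identify precisely the locus where multiplicativity breaks down, which is exactly the excluded $\alpha_p$-locus.

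For \emph{(3)}, with $A=k[t]$ and $a=t^m$ ($m\geq 1$) we have $\partial_{(0)}\in T_{X/\bA^1}$, so $\sG\subset T_{X/\bA^1}$ is a relative $1$-foliation. Since $\partial_{(0)}=\sum_i x_i^2\partial_{x_i}+t^m\omega$ is generically nonzero on every fibre $X_{t_0}$, each restriction $\sG|_{X_{t_0}}\to T_{X_{t_0}}$ is injective (a nonzero map from a line bundle into a torsion-free sheaf); applying this to the short exact sequence $0\to\sG\to T_{X/\bA^1}\to T_{X/\bA^1}/\sG\to 0$, in which $\sG$ and $T_{X/\bA^1}$ are locally free, gives $\Tor_1^{\sO_{\bA^1}}(T_{X/\bA^1}/\sG,k(t_0))=0$ for all $t_0$, so the quotient is flat over $\bA^1$. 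Finally each $\sG|_{X_{t_0}}$ is a $1$-foliation: it is saturated because the local coefficients, namely $\{x_i^2,t_0^m\}$ on $U_0$ and $\{y_0,\,y_i(y_i-1)\}$ on $U_j$, have no common factor for every value of $t_0$, including $t_0=0$; and it is $p$-closed because the commuting summands are each $p$-nilpotent, so $(\partial_{(0)}|_{t_0})^{[p]}=t_0^{mp}\omega^{[p]}=0$, where supersingularity again handles the central fibre. By \autoref{def:family_foliations} this shows $\sG$ is a family of $1$-foliations.
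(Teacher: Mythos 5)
Your proposal is correct and follows essentially the same route as the paper: the chart change producing a regular generator $\chi$ at infinity with $\chi(y_0)=-y_0$ (hence $\chi^{[p]}=\chi$ by \autoref{lemma:p_power_of_derivation}, after noting that a scaling of the $p$-closed $\partial_{(0)}$ is again $p$-closed), the identification $\sG\cong\sO_{\bP^{N-1}}(-1)\boxtimes\sO_{E\times\Spec(A)}$ from the order-one pole of $\partial_{(0)}$ along $\{X_0=0\}$, and the chart-by-chart fiberwise check of saturation and $p$-closedness for the family statement. The only differences are cosmetic: you read off $K_\sG$ from the divisor of the rational section rather than from transition functions, and you spell out the flatness of $T_{X/\bA^1}/\sG$ via a $\Tor$ argument where the paper simply observes torsion-freeness over $\bA^1$.
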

\begin{proof}
Since $E$ is supersingular we have $\omega^{[p]}=0$ \cite[12.4.1.3]{Katz_Mazur_Arithmetic_elliptic_curves}. An argument similar to \autoref{claim:p-closed} shows that $\partial_{(0)}$ is $p$-closed. Therefore it induces a $1$-foliation $\sG$ of rank $1$ on $\bP^{N-1}\times E\times \Spec(A)$. We compute the expression of $\partial_{(0)}$ on the other charts. Let $X_0,\dots,X_{N-1}$ be homogeneous coordinates of $\bP^{N-1}$: we may assume that $x_i=X_i/X_0$ for $1\leq i\leq N-1$. If $u_i=X_i/X_{N-1}$ for $0\leq i<N-1$, then we have
		\begin{eqnarray*}
		x_i=\frac{u_i}{u_0} \ (1\leq i<N-1), \quad x_{N-1}=\frac{1}{u_0},\\
		u_i=\frac{x_i}{x_{N-1}} \ (1\leq i< N-1), \quad u_0=\frac{1}{x_{N-1}}.
		\end{eqnarray*}
Therefore
		$$\partial_{x_i}=u_0\partial_{u_i} \ (1\leq i<N-1),\quad
		\partial_{x_{N-1}}=-u_0^2\partial_{u_0}-\sum_{i=1}^{N-2}u_0u_i\partial_{u_i}$$
and so
		\begin{equation}\label{eqn:change_of_coordinates}
		\partial_{(0)}=\frac{-1}{u_0}\cdot \underbrace{\left[
			u_0\partial_{u_0}+\sum_{i=1}^{N-2}(u_i-u_i^2)\partial_{u_i} - au_0\omega
		\right]}_{\partial_{(N-1)}}.
		\end{equation}
This shows that $\sG|_{D_+(X_{N-1})\times E\times\Spec(A)}$ is generated by $\partial_{(N-1)}$ and that $K_{\sG}^{-1}=\sG\cong \sO_{\bP^{N-1}}(-1)\boxtimes \sO_{E\times\Spec(A)}$. This proves the first assertion.

On $D_+(X_{N-1})\times E\times \bA^1$, notice that $\partial_{(N-1)}$ is $p$-closed and that $\partial_{(N-1)}(u_0)=u_0$. Therefore $\partial_{(N-1)}^{[p]}=\partial_{(N-1)}$ by \autoref{lemma:p_power_of_derivation}, which shows that $\sG$ has only multiplicative singularities on $D_+(X_{N-1})\times E\times\bA^1$. The computations on the patches $D_+(X_i)\times E\times\bA^1$ for $1\leq i<N-1$ are similar. On $D_+(X_0)\times E\times\bA^1$, we see that the singular locus of $\sG$ is $\{[1:0:\dots : 0]\}\times E\times V(a)$. This proves the second assertion. In case $A=k[t]$ and $a=t^m$, we proceed as in \autoref{claim:F_m_family_of_foliations} over each charts to see that $\sG$ is a family of $1$-foliations if and only if $N\geq 3$, which holds by assumption. 
\end{proof}

\begin{remark}\label{rmk:can_we_get_smth_family?}
The multiplicative singularities of $\sG$ cannot be resolved by smooth blow-ups (unless $p=2$). Indeed, the singular locus of $\partial_{(N-1)}$ is the union of the closed subsets
		$$Z_{\varepsilon_\bullet}=[\varepsilon_0=0 : \varepsilon_1: \dots \varepsilon_{N-2} : 1]\times E\times \Spec(A)$$
where $\varepsilon_1,\dots,\varepsilon_{N-2}\in\{0,1\}$. It follows from \cite[Proposition 4.1.1]{Posva_Singularities_quotients_by_1_foliations} that along $Z_{\varepsilon_\bullet}$, and formal-locally on its first factor, the $1$-foliation $\sG$ is generated by
		$$\sum_{i: \ \varepsilon_i=0} \bar{u}_i\partial_{\bar{u}_i}
		-\sum_{j: \ \varepsilon_j=1} \bar{u}_j\partial_{\bar{u}_j}
		-a\bar{u}_0\omega$$
where $\bar{u}_\bullet$ are formal parameters of $\sO_{\bP^{N-1},[\varepsilon_\bullet:1]}$ with $\bar{u}_0=u_0$. To resolve the singularity of $\sG$ along $Z_{\varepsilon_\bullet}$, we need to take the weighted blow-up given by the Rees algebra
		$$\sum_{i: \ \varepsilon_i=0} (\bar{u}_i,1)+
		\sum_{j: \ \varepsilon_j=1} (\bar{u}_j, p-1).$$
As soon as some weight $p-1\neq 1$ appears, this blow-up produces a non-schematic tame regular Deligne--Mumford stack whose coarse moduli space has quotient singularities: see \cite{Posva_Resolution_of_1_foliations} for details. So the singularities of $\sG$ along $Z_{\varepsilon_\bullet}$ cannot be resolved by usual blow-ups, unless $\varepsilon_\bullet=(0,\dots,0)$ or $p=2$. The situation on the other patches $D_+(X_i)\times E\times \Spec(A)$, for $1\leq i<N-1$, is similar.
\end{remark}

\begin{proof}[Proof of \autoref{thm:stable_families}]
For clarity, we divide the proof in several steps. If $p=2$ we can use a sligthly different construction, which we describe in Step 5.

\medskip
\textsc{Step 1: Construction of $\sY\to\bA^1$.} 
Let $E$ be a supersingular elliptic curve and $\omega\in H^0(E,T_E)$ be a global generator. If $X_0,\dots,X_{N-1}$ are the homogeneous coordinates on $\bP^{N-1}$, consider on $D_+(X_0)\times E\times \bA^1_t$ the derivation
		$$\partial_{(0)}=\sum_{i=1}^{N-1}x_i^2\partial_{x_i}+t\omega, \quad 
		x_i=X_i/X_0.$$
By \autoref{lemma:global_foliation} it induces a $1$-foliation $\sG$ on $W=\bP^{N-1}\times E\times\bA^1$. We let $\sY=(\bP^{N-1}\times E\times\bA^1)/\sG$, with quotient morphism $q\colon W\to\sY$. By construction the projection $W\to \bA^1$ factors through a flat projective morphism $\sY\to \bA^1$.

\medskip
\textsc{Step 2: Construction of the divisor $\sH$.} 
The divisor $\sH$ is chosen as follows. Take a $\bQ$-divisor $H$ on $W$ such that:
	\begin{itemize}
		\item $H=H_0\times\bA^1$ for some $\bQ$-divisor $H_0$ on $\bP^{N-1}\times E$;
		\item $K_W+(p-1)K_\sG+H$ is ample over $\bA^1$,
		\item $(W,W_\alpha+H)$ is lc for every $\alpha\in \bA^1\setminus\{0\}$, 
		\item the support of $H$ lies in general position with respect to the curve $\{[1:0:\dots : 0]\}\times E\times \{0\}$, and
		\item $H$ is not $\sG$-invariant.
	\end{itemize}
The last three properties are satisfied for a general choice of divisor of the form $H_0\times \bA^1$, thus by choosing $H_0$ general in a sufficiently ample linear system on $\bP^{N-1}$, we see there exists an $H$ with all these properties. Notice that $K_W+(p-1)K_\sG\cong \sO_{\bP^{N-1}}(-N+p-1)\boxtimes \sO_{E\times\bA^1}$ by \autoref{lemma:global_foliation}, so because of the second property and on the assumption $N\geq \max\{p,3\}$ we cannot take $H=0$. 

We let $\sH\subset \sY$ be the prime divisor with support $q(H)$. 

\medskip
\textsc{Step 3: Behaviour under base-change.}
Let us immediately observe what happens when we base-change $\sY\to\bA^1_t$ along a flat finite morphism $g\colon C\to\bA^1_t$. By \autoref{lemma:pullback_relative_foliations} we have a natural isomorphism
		$$\sY\times_{\bA^1}C\cong 
		\left( \bP^{N-1}\times E\times C\right)/f^*\sG$$
where $f\colon \bP^{N-1}\times E\times C\to \bP^{N-1}\times E\times \bA^1$ is the base-change of $g$. This shows that $\sY_C$ is normal. Observe that $H_C=f^*H$ still satisfies the five properties listed in Step 2. The pullback of $K_\sY+\frac{1}{p}\sH$ along $\sY_C\to \sY$ is equal to $K_{\sY_C}+\frac{1}{p}\sH_C$: therefore the latter is ample over $C$ as soon as $K_\sY+\frac{1}{p}\sH$ is ample over $\bA^1$. By \autoref{prop:log_adjunction} and the non-$\sG$-invariance of $H$ we have
		\begin{equation}\label{eqn:pullback_formula}
		q^*\left(K_\sY+\frac{1}{p}\sH\right)=K_W+(p-1)K_\sG+H
		\end{equation}
which, by choice of $H$, is ample over $\bA^1$. 

To complete the proof, it remains to check properties \autoref{item:b} and \autoref{item:c}, and that $(\sY_C, \frac{1}{p}\sH_C)\to C$ is locally stable. Properties \autoref{item:b} and \autoref{item:c} can be checked after an arbitrary surjective base-change. For local stability, by \cite[2.15.5]{Kollar_Families_of_varieties_of_general_type} and \cite{Hu_Zong_Base_change_local-stability_positive_char} it is enough to check it in case $g=F^r_{\bA^1/k}$ is an iterate of the $k$-linear Frobenius morphism of $\bA^1$ (\footnote{
		Instead of appealing to \cite{Kollar_Families_of_varieties_of_general_type, Hu_Zong_Base_change_local-stability_positive_char}, one could proceed as follows. The local stability is Zariski-local over $C'$, so we reduce to base-changes of the form $\Spec(\sO) \to \bA^1_t$ where $\sO$ is a DVR. If $s$ is a uniformizer of $\sO$, write $t=\nu(s)s^m$ where $\nu(s)\in \sO^\times$. Then one can prove the analogue of \autoref{thm:loc_stable_families_I} for derivations of the form $\sum_{i=1}^{n}x_i^2\partial_{x_i} + s^m\nu(s)\mu(y)\partial_{y}$: the argument is the same, but the bookkeeping is even more cumbersome. The rest of the proof of \autoref{thm:stable_families} will then be similar.
}). 

\medskip
\textsc{Step 4: Local stability and singularities of fibers.} 
To summarize, we have reduced to prove the following: let $q'\colon W=\bP^{N-1}\times E\times \bA^1_t\to \sY'$ be the quotient by the $1$-foliation $\sG'$ induced by the derivation
		$$\partial_{(0)}'=\sum_{i=1}^{N-1}x_i^2\partial_{x_i}+t^{p^r}\omega, \quad 
		\text{on}\quad
		D_+(X_0)\times E\times \bA^1_t.$$
It is endowed with a flat proper morphism $\sY'\to \bA^1_t$. Let $H'$ be a $\bQ$-divisor on $W$ satisfying the properties listed in Step 2: we let $\sH'=q(H')$. We must show that $(\sY',\frac{1}{p}\sH')\to\bA^1$ is locally stable, that $\sY'_t$ has only $\mu_p$-quotient singularities for $t\neq 0$, and that $\sY_0'$ is not $S_2$. As we will see, everything is a consequence of \autoref{thm:loc_stable_families_I} and \autoref{lemma:global_foliation}.

\medskip
\textsc{Step 4.1: Singularity of the central fiber.} 
First, let us prove that $\sY_0'$ is not $S_2$ along the image through $q'$ of the proper curve $\{[1:0:\dots :0]\}\times E\times \{0\}$. Indeed, let $z=\{[1:0:\dots :0]\}\times \{e\}\times \{0\}$ where $e$ is an arbitrary closed point of $E$. As $\sY_0'$ is a Cartier divisor in $\sY$, it is equivalent to show that $\sO_{\sY',q'(z)}$ is not $S_3$. I claim that this is \'{e}tale-local over $z$. For if $h\colon \Spec(\sO_{W,z}^h)\to \Spec(\sO_{W,z})$ is the henselization, by \cite[Lemma 3.0.7]{Posva_Resolution_of_1_foliations} we have a cartesian diagram
		$$\begin{tikzcd}
		\Spec(\sO_{W,z}^h)\arrow[r, "h"] \arrow[d] & \Spec(\sO_{W,z})\arrow[d, "q'"] \\
		\Spec(\sO_{W,z}^h)/h^*\sG'\arrow[r, "g"] & \Spec(\sO_{\sY',q'(z)})
		\end{tikzcd}$$
where $g$ is an \'{e}tale morphism. Therefore it suffices to prove that $\Spec(\sO_{W,z}^h)/h^*\sG'$ is not $S_3$. If $y\in \sO_{E,e}^h$ is an \'{e}tale coordinate at $e\in E$, then an \'{e}tale-local generator for $\sG'$ at $z$ is given by
		$$\partial^h=\sum_{i=1}^{p}x_i^2\partial_{x_i} + t^{p^r} \mu(y) \partial_y$$
where $\mu(y)\in \sO_{E,e}^h$ is a unit such that $\omega\otimes \sO_{E,e}^h=\mu(y)\partial_y$. Then we apply \autoref{thm:loc_stable_families_I} (or rather its formal-local version at the origin, which is proved similarly) to see that the $\partial^h$-invariant sub-ring of $\sO_{W,z}^h$ is not $S_3$. 

Since $\sY'$ is $S_2$, the irreducible Cartier divisor $\sY_0'$ is nonetheless $S_1$. As it is generically reduced by \autoref{thm:loc_stable_families_I}, we obtain that $\sY_0'$ is (geometrically) reduced.

\medskip
\textsc{Step 4.2: Local stability.} 
Now we prove that $(\sY',\frac{1}{p}\sH')\to\bA^1$ is locally stable. First, let $0\neq \alpha\in k$. As $\sG'|_{D(t)}$ is lc and $(W,W_\alpha+H')$ is lc, by \autoref{thm:sing_qt} we have that $(\sY',\sY'_\alpha+\frac{1}{p}\sH')$ is lc along $\sY_\alpha$. Moreover, as $\sG'|_{D(t)}$ has only multiplicative singularities by \autoref{lemma:global_foliation}, by \autoref{thm:family_of_quotients} we obtain that
		$$\sY'_\alpha= (\bP^{N-1}\times E)/\sG'_\alpha$$
where $\sG'_\alpha$ is the $1$-foliation induced by $\sum_{i=1}^{N-1}x_i^2\partial_{x_i}+\alpha^{p^r}\omega$. By \autoref{lemma:global_foliation} we see that $\sG'_\alpha$ has only multiplicative singularities. Therefore $\sY_\alpha$ has only $\mu_p$-quotient singularities by \cite[Proposition 4.1.1]{Posva_Singularities_quotients_by_1_foliations}. 

It remains to prove that $(\sY',\sY'_0+\frac{1}{p}\sH')$ is lc. To this end, we blow-up $W$ to simplify the singularities of $\sG'$. By \autoref{lemma:global_foliation} the non-lc singularities of $\sG'$ are contained on the chart $D_+(X_0)\times E\times \bA^1$, where (\'{e}tale-locally on the factor $E$) the $1$-foliation $\sG'$ is generated by a derivation of the form \autoref{eqn:family_quadratic_derivation} with $m=p^r$. Therefore, the proofs of \autoref{lemma:blowup_to_reduce_degree} and \autoref{thm:loc_stable_families_I} show that if we blow-up repeatedly the locus of non-lc singularities of $\sG'$, we obtain a birational proper morphism
		$$b\colon X=X_{p^r}\longrightarrow W$$
with $p^r$ exceptional divisors $E_1,\dots,E_{p^r}\subset X$, such that $b^*\sG'$ is lc, $(X,b_*^{-1}W_0+\sum_j E_j)$ is log smooth, each $E_j$ is $b^*\sG'$-invariant, $a(E_j;\sG')=-1$ and $a(E_j;W,W_0)=j(N-1)-1$. By our choice of $H'$, we also have that $(X,b_*^{-1}(W_0+H')+\sum_jE_j)$ is log smooth and
		$$a(E_j;W,W_0+H')=a(E_j; W,W_0) \quad \forall j=1,\dots,p^r.$$
The arguments are now the same as the ones at the end of the proof of \autoref{thm:loc_stable_families_I}. Consider the commutative diagram
		$$\begin{tikzcd}
		X\arrow[d, "b"]\arrow[r] & Z=X/b^*\sG'\arrow[d, "\fb"] \\
		W\arrow[r, "q'"] & \sY'.
		\end{tikzcd}$$
If $F_1,\dots,F_{p^r}\subset Z$ denote the $\fb$-exceptional divisors, then by \autoref{thm:sing_qt} we have
		\begin{eqnarray*}
		a\left(F_j; \sY', \sY_0'+\frac{1}{p}\sH'\right) &=& 
		a(E_j; W, W_0+H')+(p-1)\cdot a(E_j;\sG') \\
		&=& a(E_j; W, W_0) +(p-1)(-j) \\
		&=& j(N-p)-1
		\end{eqnarray*}		 
Hence we have the crepant equation
		$$K_Z+\fb^{-1}_*\left(\sY_0'+\frac{1}{p}\sH'\right)+\sum_{j=1}^{p^r}(1-j(N-p))F_j
		=\fb^*\left(K_{\sY'}+\sY'_0+\frac{1}{p}\sH'\right).$$
Notice that $1-j(N-p)\leq 1$ for every $j\geq 1$ is equivalent to $N\geq p$, which holds as $N=\max\{p,3\}$. Therefore we have that 
	$$\discrep\left(X,b^{-1}_*(W_0+H)+\sum_{j=1}^{p^r}(1-j(N-p))E_j\right)\geq -1.$$
Hence by \autoref{thm:sing_qt} we obtain that 
	$$\discrep\left(Z,\fb^{-1}_*(\sY_0+\frac{1}{p}\sH)+\sum_{j=1}^{p^r}(1-j(N-p))F_j\right)\geq -1,$$
and therefore $(\sY',\sY'_0+\frac{1}{p}\sH')$ is lc. 

\medskip
\textsc{Step 5: the case $p=2$.} 
It follows from \autoref{rmk:can_we_get_smth_family?} that there exists a smooth blow-up $Z\to \bP^{N-1}$ whose (reducible) center does not contain $[1:0:\dots :0]$, and such that if $b\colon Z\times E\times \bA^1\to W$ is the induced morphism, the birational pullback $b^*\sG$ is regular away from the preimage of $\{[1:0: \dots : 0]\}\times E\times \{0\}$. We take 
		$$\sZ=(Z\times E\times \bA^1)/b^*\sG\longrightarrow \bA^1$$
as the example. By \cite[Proposition 5.2.4, Lemma 2.5.10]{Posva_Singularities_quotients_by_1_foliations} we have that $\sZ_\alpha$ is regular for $\alpha\neq 0$. The rest of the properties are proved exactly as above. This completes the proof.
\end{proof}

Let us make some observations about the preceding proof.

\begin{remark}\label{rmk:boundary_divisor}
The coefficients of $H$ are $\leq 1$, so the coefficients of $\frac{1}{p}\sH$ are $\leq 1/p$. The extra factor $1/p$ appears because $H$ is chosen to be non-$\sG$-invariant. I do not know whether there exists a $\sG$-invariant divisor $H$ that satisfies the other properties. On the other hand, the number of irreducible components of $H$ can be any (in particular, $\sH$ can be irreducible).
\end{remark}

\begin{remark}\label{rmk:volume}
If $e\in E$ an arbitrary point then the line bundle $\sL=\sO_{\bP^{N-1}}(N+p)\boxtimes \sO_E(e)$ is ample on $\bP^{N-1}\times E$, and $\sL^{\otimes n}$ is very ample for every $n\geq 3$. Moreover, for any integer $m\geq 1$, if $H_0$ is a general element of the $\bQ$-linear system $\frac{1}{m}|\sL^{\otimes n}|$ then $H_0\times\bA^1$ satisfies the conditions listed in Step 2.
\end{remark}

\subsection{Second example: $\mathbf{\dim=3, p=3}.$}\label{section:alternative_derivation}
Consider the affine space $\bA^3_{x,y,z}\times\bA^1_t$ and the derivation
		$$\partial_m=y^3\partial_x+x\partial_y+t^m\partial_z, \quad m\geq 1.$$
Using an argument as in \autoref{claim:p-closed} together with \cite[Corollary 6]{Mitsui_Sato_Criterion_for_p_closedness_in_dim_2}, one checks that $\partial_m^{[p]} =0$ as soon as $p>2$. In that case $\partial_m$ generates a $1$-foliation $\sF_m$ on $\bA^3\times\bA^1$. As in \autoref{claim:F_m_family_of_foliations}, one sees that $\sF_m$ is a family of $1$-foliations over $\bA^1_t$.

Let us show how to simplify the singularities of $\sF_m$. As in \autoref{lemma:blowup_to_reduce_degree}, we are able to reduce $\sF_m$ to $\sF_{m-1}$, but we need two blow-ups to achieve this.

\begin{claim}\label{claim:multiplicative_derivation}
For $p>2$, we have $((1-2x^2)\partial_x)^{[p]}=2^{(5p-3)/2} (1-2x^2)\partial_x$.
\end{claim}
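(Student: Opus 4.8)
The plan is to invoke \autoref{lemma:p_power_of_derivation} with $a=x$. Writing $\partial=(1-2x^2)\partial_x$, the restricted power $\partial^{[p]}$ (the $p$-fold composite of $\partial$) is again a $k$-derivation of $k[x]$, and $\partial(x)=1-2x^2\neq 0$. Hence it suffices to produce a \emph{scalar} $h\in k$ with $\partial^{[p]}(x)=h\cdot(1-2x^2)$: the lemma then upgrades this pointwise identity to the operator identity $\partial^{[p]}=h\,\partial$. So the entire problem reduces to computing the single element $\partial^{[p]}(x)$ and checking that it is a constant multiple of $1-2x^2$.

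To carry out that computation I would linearize $\partial$ by a Möbius change of coordinate, working in the function field $k(x)$ (equivalently in the localization $k[x]_{(x+\alpha)}$), where rational substitutions are allowed; since $\partial^{[p]}$ is determined by its value on the single generator $x$, the outcome is insensitive to this localization. As $p>2$ and $k$ is algebraically closed, choose $\alpha\in k$ with $2\alpha^2=1$, so that $1-2x^2=-2(x-\alpha)(x+\alpha)$. Put $w=\dfrac{x-\alpha}{x+\alpha}$, an invertible local coordinate. A direct differentiation gives $\partial(w)=-4\alpha\,w$, that is, in the coordinate $w$ the derivation becomes the \emph{scaled Euler operator}
$$\partial=-4\alpha\,(w\partial_w).$$

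Now $\theta:=w\partial_w$ satisfies $\theta^{[p]}=\theta$, because $\theta(w^k)=k\,w^k$ forces $\theta^{[p]}(w^k)=k^{p}w^k=k\,w^k=\theta(w^k)$ for every $k$ (the Frobenius fixes $\bF_p$). Since $-4\alpha\in k$ commutes with the $k$-linear operator $\theta$, the $p$-fold composite scales as $\partial^{[p]}=(-4\alpha)^{p}\theta^{[p]}=(-4\alpha)^{p}\theta=(-4\alpha)^{p-1}\,\partial$. Thus $h=(-4\alpha)^{p-1}$, which is manifestly a scalar, so the qualitative content of the claim is settled. The remaining — and only delicate — step is the arithmetic simplification of this constant: using $(-1)^{p-1}=1$, the relation $\alpha^2=1/2$, and Fermat's little theorem to reduce the resulting powers of $2$ modulo $p-1$, one rewrites
$$(-4\alpha)^{p-1}=4^{p-1}\,(\alpha^2)^{(p-1)/2}=4^{p-1}\cdot 2^{-(p-1)/2}$$
as a power of $2$ and compares it with $2^{(5p-3)/2}$. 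I expect this last scalar bookkeeping to be the main obstacle, since it is the one place where a stray factor of $2$ or a miscounted Frobenius reduction can creep in (a quick sanity check at $p=3$, where the direct iteration gives $\partial^{[3]}(x)=2(1-2x^2)$, is worth running to pin down the exponent); by contrast the structural input — linearization to an Euler operator together with $\theta^{[p]}=\theta$ — is robust and characteristic-free in form.
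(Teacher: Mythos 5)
Your linearization argument is correct, and it is a genuinely different (and cleaner) route than the paper's. The paper substitutes $u=1+\sqrt{2}x$, which only brings $\partial$ into the form $2^{1/2}(2u\partial_u-u^2\partial_u)$, and must then invoke Jacobson's formula: it computes $(-u^2\partial_u)^{[p]}=0$ and $(2u\partial_u)^{[p]}=2^{p}u\partial_u$, observes that all iterated brackets are multiples of $u^2\partial_u$, and pins down the unknown coefficient of the bracket sum indirectly from $p$-closedness. Your M\"obius coordinate $w=(x-\alpha)/(x+\alpha)$ diagonalizes $\partial$ completely into the scaled Euler operator $-4\alpha\,w\partial_w$, so $\partial^{[p]}=(-4\alpha)^{p-1}\partial$ falls out with no structure theory at all. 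Two small remarks: \autoref{lemma:p_power_of_derivation} requires $\partial$ to be $p$-closed, which you should justify (it follows from Hochschild's formula, since $\partial$ is a scaling of $\partial_x$); but in fact you do not need the lemma, because your identity $\partial^{[p]}=(-4\alpha)^{p-1}\partial$ holds as an identity of operators on $k(x)=k(w)$ and simply restricts to $k[x]$.

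However, finishing your ``scalar bookkeeping'' shows that the constant in the Claim is misstated, and your own $p=3$ sanity check proves this. One has
\[
(-4\alpha)^{p-1}=4^{p-1}(\alpha^2)^{(p-1)/2}=2^{2(p-1)}\cdot 2^{-(p-1)/2}=2^{(3p-3)/2},
\]
whereas the Claim asserts $2^{(5p-3)/2}$; these differ by the factor $2^{p}\equiv 2\pmod{p}$, which is $\neq 1$ since $p>2$. At $p=3$ your direct iteration gives $\partial^{[3]}=2\partial$, and indeed $2^{(3p-3)/2}=8\equiv 2\pmod 3$, while the claimed constant would be $2^{(5p-3)/2}=64\equiv 1\pmod 3$. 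The same slip occurs in the last line of the paper's own proof: proportionality of $2^{p/2}(2^{p}u\partial_u-\alpha u^2\partial_u)$ with $2^{1/2}(2u\partial_u-u^2\partial_u)$ forces $\alpha=2^{p-1}$, not $2^{(5p-3)/2}$, and then the resulting scaling constant is $2^{(p-1)/2}\cdot 2^{p-1}=2^{(3p-3)/2}$, which in $k$ equals $2^{(p-1)/2}=\left(\tfrac{2}{p}\right)=\pm 1$. Fortunately the error is harmless downstream: the Claim is used only in chart $U_{BB}$ of \autoref{lemma:blow-up_reduce_degree_II} to see that $\psi_{BB}$ satisfies $\psi_{BB}^{[p]}=c\,\psi_{BB}$ for some \emph{nonzero} scalar $c$, i.e.\ that it is multiplicative and hence lc by \autoref{prop:mult_is_lc}; the precise value of $c$ never matters.
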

\begin{proof}
After the change of variable $u=1+\sqrt{2}x$, we reduce to consider the $p$-th power of the derivation $\partial=\sqrt{2}u(2-u)\partial_u$ on $k[u]$. By a formula of Jacobson \cite[p. 209]{Jacobson_Abstract_derivations_and_Lie_algebras}, we have
		$$\partial^{[p]}=2^{p/2}\left((-u^2\partial_x)^{[p]}+(2u\partial_u)^{[p]}+S\right)$$
where $S$ is a $k$-linear combination of $(p-1)$-fold commutators of $-u^2\partial_x$ and $2u\partial_u$. Now we make several observations. First, by Hoschild's formula $\partial$ is $p$-closed and so $\partial^{[p]}$ is a $k[u]$-scaling of $\partial$. As $(-u^2\partial_u)^{[p]}=0$ and $(2u\partial_u)^{[p]}=2^{p}u\partial_u$, it follows that $S\neq 0$. As
		$$[u^2\partial_u,\ 2u\partial_u]=-2u^2\partial_u$$
we see that $S\neq 0$ can only be of the form $-\alpha u^2\partial_u$ for some $\alpha\in k^\times$. Hence $\partial^{[p]}=2^{p/2}(2^pu\partial_u-\alpha u^2\partial_u)$ is a $k[u]$-scaling of $\partial=2^{1/2}(2u\partial_u-u^2\partial_u)$, which forces $\alpha=2^{(5p-3)/2}$. 
\end{proof}

\begin{lemma}\label{lemma:blow-up_reduce_degree_II}
Suppose that $p>2$. We have $\Sing(\sF_m)=V(x,y,t)$ for $m\geq 1$, and $\sF_0$ is regular. If $m\geq 1$ and
		$$b_1\colon Y=\Bl_{(x,y,t)}(\bA^3\times \bA^1)\longrightarrow \bA^3\times\bA^1$$
with exceptional divisor $E$, then:
	\begin{enumerate}
		\item $a(E;\sF_m)=0$,
		\item $E$ is $b_1^*\sF_m$-invariant,
		\item the locus $S$ of (non-lc) singularities of $b_1^*\sF_m$ is closed, $\mult_{S}E=1$, $\codim_YS=2$ and $S$ is not contained in $(b_1)^{-1}_*(t=0)$.
	\end{enumerate}
Consider $b_2\colon \Bl_SY\longrightarrow Y$ with exceptional divisor $F$ and $b=b_1\circ b_2$. Then:
	\begin{enumerate}
		\item $a(F;\sF_m)=-1$,
		\item $F$ is $b^*\sF_m$-invariant,
		\item if $m\geq 2$, the non-lc locus of $b^*\sF_m$ is closed, contained in a blow-up chart $V$ of $b_2$, not contained in the support of $b_*^{-1}(t=0)+(b_2)^{-1}_*E$, and
		\item we have an $\bA^1_t$-isomorphism $V\cong \bA^3\times \bA^1_t$ under which $b^*\sF_m|_V$ is isomorphic to $\sF_{m-1}$, and the $b_2$-exceptional divisor corresponds to $(t=0)$.
	\end{enumerate}
\end{lemma}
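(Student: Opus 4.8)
The plan is to follow the template of \autoref{lemma:blowup_to_reduce_degree}, carrying out explicit computations on the affine charts of the two blow-ups via \autoref{example:pullback_smooth_blow-up}, \autoref{lemma:p_power_of_derivation} and \autoref{prop:mult_is_lc}; the only genuinely new input will be \autoref{claim:multiplicative_derivation}. First I would record that $\partial_m$ vanishes exactly along $V(x,y,t)$ for $m\geq 1$ (and nowhere for $m=0$, since then the $\partial_z$-coefficient is a unit), which gives the statement on $\Sing(\sF_m)$ and the regularity of $\sF_0$. Then I blow up $V(x,y,t)$ and compute $b_1^*\partial_m$ in the three charts. In the $x$-chart the coefficient in the $y$-direction acquires the constant term $1$, so $b_1^*\sF_m$ is regular there; in the $y$- and $t$-charts the pulled-back generator is already saturated (no power of the exceptional coordinate factors out), which gives $a(E;\sF_m)=0$, and since $b_1^*\partial_m$ sends the local equation of $E$ into its own ideal in every chart, $E$ is $b_1^*\sF_m$-invariant.

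The next step is to locate the non-lc locus $S$ of $b_1^*\sF_m$. Away from $E$ the morphism $b_1$ is an isomorphism onto the regular locus of $\sF_m$, and the $x$-chart is regular along $E$; in the $y$-chart the singular locus is $V(a,y)$ and in the $t$-chart it is $V(\alpha,t)$ (with the coordinate names of \autoref{example:pullback_smooth_blow-up}). Reading these two pieces inside the exceptional bundle $E\cong\bP^2\times\bA^1_z$, they glue to the single locus $S=\{X=0\}\cong\bP^1\times\bA^1_z$, where $X,Y,T$ are the fiber coordinates dual to $x,y,t$. A direct inspection shows that the linear part of the saturated generator is nilpotent transversally to $S$ (the leading term being of the shape $-ac\,\partial_c$), so $S$ is non-lc by \autoref{prop:mult_is_lc}; since $S$ is irreducible with non-lc generic point and the non-lc locus is closed and contained in $\Sing(b_1^*\sF_m)\cap E=S$, it equals $S$. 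This yields $\codim_Y S=2$, $\mult_S E=1$ (as $E$ is reduced), and $S\not\subset (b_1)^{-1}_*(t=0)$, because the latter meets $E$ along $\{T=0\}$, which contains only the point $[0:1:0]$ of $S$.

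I then blow up $S$ and set $b=b_1\circ b_2$. Working in the chart of $b_2$ lying over the $t$-chart of $b_1$ in which $t$ is the exceptional coordinate (so $\alpha=tr$), the pullback of the generator factors as $t\cdot(\beta^3\partial_r+r\partial_\beta+t^{m-1}\partial_z)$. This simultaneously gives $a(F;\sF_m)=-1$, shows $F=(t=0)$ is invariant (the saturated generator has no $\partial_t$-component), and, under the relabeling $(r,\beta,z,t)\mapsto(x,y,z,t)$, identifies this chart $V\cong \bA^3\times\bA^1_t$ with $b^*\sF_m|_V\cong\sF_{m-1}$, the $b_2$-exceptional divisor corresponding to $(t=0)$. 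The same one-factor factorization occurs in the remaining sub-charts of $b_2$, confirming $a(F;\sF_m)=-1$ and the invariance of $F$ globally.

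The hard part, and the place where \autoref{claim:multiplicative_derivation} enters, is to show that for $m\geq 2$ no non-lc point survives outside $V$, i.e.\ that the singular points created by $b_2$ away from $V$ are multiplicative. In the sub-chart over the $y$-chart in which $y$ is exceptional ($a=yb$), the saturated generator factors as $y\cdot\bigl[(1-2b^2)\partial_b+yb\,\partial_y-bc\,\partial_c+y^{m-1}c^m\partial_z\bigr]$, whose singular points on $(y=0)$ lie at $b=\pm 1/\sqrt2$. Their transverse behaviour is governed by $(1-2b^2)\partial_b$, and \autoref{claim:multiplicative_derivation} shows that $\bigl((1-2b^2)\partial_b\bigr)^{[p]}$ is a \emph{nonzero} constant multiple of $(1-2b^2)\partial_b$; after rescaling the generator the remaining directions contribute eigenvalues in $\bF_p$, so $\partial^{[p]}$ is a unit multiple of $\partial$ there and the foliation is multiplicative, hence lc by \autoref{prop:mult_is_lc}. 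Thus the only non-lc points lie in $V$, where they coincide with $\Sing(\sF_{m-1})=V(x,y,t)$; this locus is closed and, since $V$ meets neither the strict transform of $(t=0)$ nor that of $E$, it is not contained in $\Supp\bigl(b_*^{-1}(t=0)+(b_2)^{-1}_*E\bigr)$. The remaining verifications (closedness of the various loci, bookkeeping of multiplicities) are then routine.
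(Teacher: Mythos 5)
Your strategy is the same as the paper's (the same two blow-ups, chart-by-chart computations via \autoref{example:pullback_smooth_blow-up}, multiplicativity via \autoref{lemma:p_power_of_derivation}, \autoref{claim:multiplicative_derivation} and \autoref{prop:mult_is_lc}), and the computations you actually perform --- the three charts of $b_1$, the gluing of the singular locus into $S=\{X=0\}\subset E\cong \bP^2\times\bA^1_z$, and the two charts of $b_2$ you treat --- agree with the paper's. But there is a genuine gap in the final step. The blow-up $b_2$ of $S$ has \emph{four} relevant charts: two over the $y$-chart $U_B$ of $b_1$ (with $u$, resp.\ $v$, as exceptional coordinate) and two over the $t$-chart $U_C$. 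You verify lc-ness only on the chart over $U_B$ where $v$ is exceptional and on $V$, and then conclude that ``the only non-lc points lie in $V$''. This does not follow: the chart over $U_B$ in which $u$ is exceptional (write $u=\bar u$, $v=\bar u\bar v$) contains singular points of $b^*\sF_m$ that are invisible in every chart you checked. There the saturated generator is
$$\psi_{BA}=\bar u(\bar v^2-1)\partial_{\bar u}+\bar v(2-\bar v^2)\partial_{\bar v}-s\partial_s+\bar u^{m-1}\bar v^m s^m\partial_z,$$
whose singular locus is $\{\bar u=s=0,\ \bar v(2-\bar v^2)=0\}$. The component $\{\bar u=\bar v=s=0\}$ (the normal direction $[1:0]$ of the exceptional $\bP^1$, lying over the locus $s=0$ of $S$) meets no other chart: passing to the $v$-exceptional chart requires $\bar v\neq 0$, and passing to the charts over $U_C$ requires $s\neq 0$. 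So lc-ness of $b^*\sF_m$ at these points is unproven in your argument, and with it the third item of the second list of the lemma.

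The repair is exactly the paper's one-line argument: since $\psi_{BA}(s)=-s\neq 0$ and the $\partial_s$-coefficient of $\psi_{BA}$ depends on $s$ alone, \autoref{lemma:p_power_of_derivation} gives $\psi_{BA}^{[p]}=\pm\psi_{BA}$, so $b^*\sF_m$ has only multiplicative singularities on this chart and is lc there by \autoref{prop:mult_is_lc}; one should also record that the remaining chart, over $U_C$ with $u$ exceptional, carries a generator with $\partial_v$-coefficient equal to $1$ and hence is regular. A smaller point: your multiplicativity argument at $b=\pm 1/\sqrt{2}$ (``the remaining directions contribute eigenvalues in $\bF_p$'') is vaguer than needed. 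The clean version is that, since the $\partial_b$-coefficient of the generator depends only on $b$, its $p$-th iterate applied to $b$ equals $\bigl((1-2b^2)\partial_b\bigr)^{[p]}(b)=2^{(5p-3)/2}(1-2b^2)$, so \autoref{lemma:p_power_of_derivation} applies directly and yields a global scaling relation $\psi^{[p]}=2^{(5p-3)/2}\psi$, not merely a pointwise statement at the singular points.
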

\begin{proof}
Let us first consider the blow-up $b_1$. We have three blow-up charts, which we denote by $U_{A}$, $U_B$ and $U_C$:
	\begin{itemize}
		\item \textsc{Chart $U_A$}, given by $x\mapsto u, \ y\mapsto uv, \ z\mapsto z, \ t\mapsto us$. Then 
					$$b_1^*\partial_m|_{U_A}= u^3v^3\partial_u+(1-u^2v^4)\partial_v-u^2v^3s\partial_s+u^ms^m\partial_z$$
			which shows that $b_1^*\sF_m|_{U_A}$ is a regular $1$-foliation. Notice that $b_1^*\partial_m(u)\in (u)$, so $E$ is $b_1^*\sF_m$-invariant. Since $b_1^*\partial_m|_{U_A}$ generates $b^*\sF_m|_{U_A}$, without having to saturate, we have $a(E;\sF_m)=0$.
		\item \textsc{Chart $U_B$}, given by $x\mapsto uv, \ y\mapsto v, \ z\mapsto z, \ t\mapsto vs$. Then 
					$$b_1^*\partial_m|_{U_B}= (v^2-u^2)\partial_u+uv\partial_v-us\partial_s+v^ms^m\partial_z$$
			which is singular along the plane $u=v=0$. These singularities are non-lc.
		\item \textsc{Chart $U_C$}, given by $x\mapsto us, \ y\mapsto vs, \ z\mapsto z, \ t\mapsto s$. Then 
					$$b_1^*\partial_m|_{U_C}= v^3s^2\partial_u+u\partial_v+s^m\partial_z$$
			which is singular along the plane $u=s=0$. These singularities are non-lc.
	\end{itemize}
We see that the locus $S\subset Y$ of singularities of $b_1^*\sF_m$ is smooth, closed and irreducible of codimension $2$. As can be checked on chart $U_B$, that singular locus is not contained in the strict transform $b_*^{-1}(t=0)$.

Next let us blow-up $S$. We have two blow-up charts above $U_B$ (resp. above $U_C$), which we denote by $U_{BA}$ and $U_{BB}$ (resp. by $U_{CA}$ and $U_{CB}$):
	\begin{itemize}
		\item \textsc{Chart $U_{BA}$}, given by $u\mapsto \bar{u},\ v\mapsto \bar{u}\bar{v}, \ z\mapsto z, \ s\mapsto s$. Then 
				$$b^*\partial_m|_{U_{BA}}= \bar{u} \cdot \underbrace{\left[
				\bar{u}(\bar{v}^2-1)\partial_{\bar{u}}
				+\bar{v}(2-\bar{v}^2)\partial_{\bar{v}}
				-s\partial_s + \bar{u}^{m-1}\bar{v}^ms^m\partial_z\right]}_{\psi_{BA}}.$$
		As $\psi_{BA}(s)=-s$, by \autoref{lemma:p_power_of_derivation} we see that $\psi_{BA}^{[p]}=-\psi_{BA}$. Thus $b^*\sF_m$ is lc on the chart $U_{BA}$. As $\psi_{BA}(\bar{u})\in (\bar{u})$ we see that $F$ is $b^*\sF_m$-invariant. As $b^*\partial_m=\bar{u}\cdot \psi_{BA}$ we find that $a(F;\sF_m)=-1$.
		\item \textsc{Chart $U_{BB}$}, given by $u\mapsto \bar{u}\bar{v},\ v\mapsto \bar{v}, \ z\mapsto z, \ s\mapsto s$. Then 
				$$b^*\partial_m|_{U_{BB}}= \bar{v} \cdot \underbrace{\left[
				(1-2\bar{u}^2)\partial_{\bar{u}}
				+\bar{u}\bar{v}\partial_{\bar{v}}
				-\bar{u}s\partial_s + \bar{v}^{m-1}s^,\partial_z\right]}_{\psi_{BB}}.$$
		By \autoref{claim:multiplicative_derivation} we see that $\psi_{BB}(u)=2^{(5p-3)/2}\psi_{BB}(u)$. Thus by \autoref{prop:mult_is_lc} we see that $b^*\sF_m$ is lc on the chart $U_{BB}$.
		\item \textsc{Chart $U_{CA}$}, given by $u\mapsto \bar{u},\ v\mapsto v, \ z\mapsto z, \ s\mapsto \bar{u}\bar{s}$. Then 
				$$b^*\partial_m|_{U_{CA}}= \bar{u} \cdot \underbrace{\left[
				\bar{u}v^3\bar{s}^2\partial_{\bar{u}}
				+\partial_{v}
				+\bar{u}^{m-1}\bar{s}^m\partial_z - v^3\bar{s}^3\partial_{\bar{s}}\right]}_{\psi_{CA}}.$$
		As $\psi_{CA}$ generates a regular $1$-foliation, we see that $b^*\sF_m$ is regular on the chart $U_{CA}$.
		\item \textsc{Chart $U_{CB}$}, given by $u\mapsto \bar{u}\bar{s},\ v\mapsto v, \ z\mapsto z, \ s\mapsto \bar{s}$. Then 
				$$b^*\partial_m|_{U_{CB}}= \bar{s} \cdot \underbrace{\left[
				v^3\partial_{\bar{u}}
				+\bar{u}\partial_{v}
				+\bar{s}^{m-1}\partial_z\right]}_{\psi_{CB}}.$$
			Under the $\bA^1_t$-isomorphism $U_{CB}\cong \bA^3_{x,y,z}\times\bA^1_t$ given by $(\bar{u},\bar{v},z,\bar{s})\mapsto (x,y,z,t)$, we see that $\psi_{CB}$ corresponds to $\partial_{m-1}$, and thus $b^*\sF_m|_{U_{CB}}$ corresponds to $\sF_{m-1}$.
	\end{itemize}	
The above computations show that the non-lc locus of $b^*\sF_m$ is contained in $U_{CB}$. It is elementary (but tedious) to check that this locus does not intersect the other patches $U_{CA}$, $U_{BA}$ and $U_{BB}$. It remains to observe that the strict transforms of $E\subset Y$ and of $(t=0)\subset \bA^3\times\bA^1_t$ are disjoint from the chart $U_{CB}$. 
\end{proof}

\begin{theorem}\label{thm:loc_stable_families_II}
With the notations as above, assume that $p=3$ and let $Y=(\bA^3\times\bA^1)/\sF_1$. Then:
	\begin{enumerate}
		\item The projection $\bA^3\times \bA^1_t\to\bA^1_t$ factors through a flat morphism $Y\to \bA^1_t$ of relative dimension $3$;
		\item $Y_t$ is smooth for $t\neq 0$, while $Y_0$ is reduced and non-$S_2$;
		\item $Y\to \bA^1$ is locally stable, and $Y$ has canonical non-$S_3$ singularities along $Y_0$.
	\end{enumerate}
Moreover, if $C$ is a normal curve and $C\to \bA^1_t$ is a finite flat morphism, then $Y\times_{\bA^1_t}C$ is normal and $Y_C\to C$ is locally stable.
\end{theorem}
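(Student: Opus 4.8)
The plan is to follow the proofs of \autoref{thm:loc_stable_families_I} and \autoref{cor:canonical_non_S3} almost verbatim, with \autoref{lemma:blow-up_reduce_degree_II} playing the role of \autoref{lemma:blowup_to_reduce_degree}. Since $\partial_1(t)=0$ we have $k[t]\subset k[x,y,z,t]^{\partial_1}$, which yields the flat factorization $Y\to\bA^1_t$ of relative dimension $3$ and proves (1). For (2), the foliation $\sF_1$ is regular on $D(t)$ because $\Sing(\sF_1)=V(x,y,t)\subset(t=0)$; hence by \autoref{thm:family_of_quotients} the fibre $Y_\alpha$ ($\alpha\neq0$) is the quotient of $\bA^3$ by the regular foliation generated by $y^3\partial_x+x\partial_y+\alpha\partial_z$, and is therefore smooth. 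The failure of $S_2$ for $Y_0$ is checked as in \autoref{thm:loc_stable_families_I}: the coordinate $z$ is invariant for $\overline{\partial}=y^3\partial_x+x\partial_y$, and no $f$ can satisfy $\partial_1(z+tf)=0$, since such an identity would force $\partial_1(f)=-1$ whereas $\partial_1(f)\in(x,y,t)$. Reducedness of $Y_0$ then follows because it is an irreducible $S_1$ Cartier divisor that is generically normal on the locus $D(x)$, where $\sF_1$ is regular.

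The substance is in (3). To prove $(Y,Y_0)$ lc I would take the composite $b=b_1\circ b_2$ of the two blow-ups of \autoref{lemma:blow-up_reduce_degree_II}; since $m=1$, a single round already makes $b^*\sF_1\cong\sF_0$ regular, hence lc everywhere. Write $W=\bA^3\times\bA^1$ and $W_0=(t=0)$. On the smooth models one finds $a(E;W,W_0)=1$ (the center $V(x,y,t)$ has codimension $3$ and lies in $W_0$) and, passing through the first blow-up, $a(F;W,W_0)=2$ (here $S$ has codimension $2$, is disjoint from the strict transform of $W_0$, and $\mult_S E=1$). Since both exceptional divisors $E,F$ are $b^*\sF_1$-invariant with $a(E;\sF_1)=0$ and $a(F;\sF_1)=-1$, \autoref{thm:sing_qt} gives, for their images $F_E,F_F\subset Z=X/b^*\sF_1$,
\[
a(F_E;Y,Y_0)=a(E;W,W_0)+(p-1)\,a(E;\sF_1)=1,\qquad
a(F_F;Y,Y_0)=a(F;W,W_0)+(p-1)\,a(F;\sF_1)=0 .
\]
The resulting crepant equation reduces log canonicity to $\discrep(X,b^{-1}_*W_0-E)\ge-1$, which holds because $(X,b^{-1}_*W_0+E+F)$ is log smooth with all coefficients $\le1$ \cite[Corollary 2.11]{Kollar_Singularities_of_the_minimal_model_program}. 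Canonicity and the failure of $S_3$ along $Y_0$ are then obtained exactly as in \autoref{cor:canonical_non_S3}: $Y$ is regular off $Y_0$, and for any divisor $F'$ centred in $Y_0$ one has $a(F';Y)=a(F';Y,Y_0)+\mult_{F'}Y_0\ge-1+1=0$.

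For the base-change assertion, \autoref{lemma:pullback_relative_foliations} identifies $Y\times_{\bA^1}C$ with the quotient $(\bA^3\times C)/f^*\sF_1$, where $f\colon\bA^3\times C\to\bA^3\times\bA^1$ is the induced map; in particular $Y_C$ is normal. To see that $Y_C\to C$ is locally stable I would argue as in Step 3 of the proof of \autoref{thm:stable_families}: by \cite[2.15.5]{Kollar_Families_of_varieties_of_general_type} and \cite{Hu_Zong_Base_change_local-stability_positive_char} it suffices to treat the Frobenius base changes $t\mapsto t^{p^r}$, which replace $\sF_1$ by the foliation generated by $y^3\partial_x+x\partial_y+t^{p^r}\partial_z$. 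One then iterates \autoref{lemma:blow-up_reduce_degree_II} to resolve the non-lc singularities after $2p^r$ blow-ups and runs the same discrepancy estimate.

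I expect the bookkeeping in this last step to be the main obstacle. Unlike the first example, each reduction step from $\sF_j$ to $\sF_{j-1}$ requires \emph{two} blow-ups, and (as \autoref{lemma:blow-up_reduce_degree_II} shows) the center of the next round is nested inside the exceptional divisor $F_{j-1}$ produced by the previous one, which now plays the role of $(t=0)$. Consequently both the discrepancies $a(E_j;W,W_0),\,a(F_j;W,W_0)$ and the foliation discrepancies $a(E_j;\sF),\,a(F_j;\sF)$ acquire contributions from this nesting and do not obey the clean linear pattern of \autoref{thm:loc_stable_families_I}; they must be propagated carefully through the tower of blow-ups. Moreover the governing inequality is the borderline case $\dim=3=p$, so the estimates are tight and leave no slack: verifying that $a(F_j;Y,Y_0)\ge-1$ for every $j$ is precisely where the argument is most delicate.
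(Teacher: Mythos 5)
Your treatment of items (1)--(3) is correct and follows exactly the paper's route: the flat factorization through $k[t]\subset k[x,y,z,t]^{\partial_1}$, smoothness of $Y_\alpha$ ($\alpha\neq 0$) via \autoref{thm:family_of_quotients}, the non-$S_2$ argument with the invariant coordinate $z$ (the identity $\partial_1(z+tf)=0$ forcing $-1=\partial_1(f)\in(x,y,t)$), and, for local stability of $(Y,Y_0)$ at $m=1$, the two blow-ups of \autoref{lemma:blow-up_reduce_degree_II} with the discrepancies $a(E;W,W_0)=1$, $a(F;W,W_0)=2$, $a(E;\sF_1)=0$, $a(F;\sF_1)=-1$, giving quotient discrepancies $1$ and $0$ and reducing log canonicity to a log-smooth computation on $X$. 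The deduction of canonicity and non-$S_3$ via $a(F';Y)=a(F';Y,Y_0)+\mult_{F'}Y_0\geq 0$ is also the paper's argument (via \autoref{cor:canonical_non_S3}).

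The genuine gap is in the ``Moreover'' part. You correctly reduce, via \cite[2.15.5]{Kollar_Families_of_varieties_of_general_type} and \cite{Hu_Zong_Base_change_local-stability_positive_char}, to the Frobenius base-changes, i.e.\ to proving that $(\bA^3\times\bA^1)/\sF_{p^r}\to\bA^1$ is locally stable for all $r\geq 1$; but you then stop, flagging the bookkeeping through the tower of $2p^r$ blow-ups as an obstacle and doubting that a clean pattern exists. This is precisely the step the paper carries out, and your pessimism is unwarranted: the nesting you describe (each new center sitting inside the previous $F_{j-1}$, which plays the role of $(t=0)$ in the chart $U_{CB}$, with multiplicity one) contributes exactly one extra unit per round to each discrepancy, so induction yields the \emph{linear} formulas
$$a(E_i;\sF_{p^r})=-i+1,\quad a(F_i;\sF_{p^r})=-i,\quad a(E_i;W,W_0)=3i-2,\quad a(F_i;W,W_0)=3i-1,$$
whence by \autoref{thm:sing_qt} the quotient discrepancies are $a(E_i';Y',Y'_0)=i(4-p)+p-3=i$ and $a(F_i';Y',Y'_0)=i(4-p)-1=i-1$ for $p=3$. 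In particular all of them are $\geq 0$ (not merely $\geq -1$), so the crepant equation has coefficients $\leq 0$ and the lc conclusion follows from log smoothness of $(X,\sum_i E_i+F_i)$ exactly as in the $m=1$ case; there is slack, and the only tightness is that the inequality $i(4-p)\geq 0$ forces $p=3$ among primes $p>2$, which is why the theorem is stated only in that characteristic. Without this induction your proof of the base-change statement (and hence of the last assertion of the theorem) is incomplete.
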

\begin{proof}
The first item is clear. For the remaining ones, as in the proof of \autoref{cor:canonical_non_S3} and \autoref{thm:stable_families}, we reduce to prove the following: if $Y'=(\bA^3\times\bA^1)/\sF_{p^r}$ then $Y'\to\bA^1_t$ is locally stable, has smooth fibers above $t\neq 0$ and a non-$S_2$ reduced central fiber $Y'_0$. The statements about the singularities of the fibers are proved as in \autoref{thm:loc_stable_families_I} (and are true whenever $p>2$). It remains to check that $Y'\to \bA^1$ is locally stable above $t=0$. 

As usual by now, we blow-up $W=\bA^3\times\bA^1_t$. By \autoref{lemma:blow-up_reduce_degree_II}, there is a sequence of smooth blow-ups
		$$b\colon X=X_{2p^r}\longrightarrow W=\bA^3\times\bA^1_t$$
with exceptional divisors $E_i,F_i\subset X$ for $i=1,\dots,p^r$ such that: $(X,\sum_i E_i+F_i)$ is log smooth, $b^*\sF_{p^r}$ has only lc singularities, the $E_i$'s and the $F_i$'s are $b^*\sF_{p^r}$-invariant, and
		$$a(E_i;\sF_{p^r})=-i+1, \quad a(F_i;\sF_{p^r})=-i\quad
		\forall i=1,\dots,p^r.$$
Using \autoref{lemma:blow-up_reduce_degree_II} and proceeding by induction, one computes
		$$a(E_i; W,W_0)=3i-2,\quad a(F_i;W,W_0)=3i-1.$$
Now let $Z=X/b^*\sF_{p^r}$ and denote by $E'_i\subset Z$ (resp. by $F'_i\subset Z$) the image of $E_i$ (resp. of $F_i$). Then by \autoref{lemma:blow-up_reduce_degree_II} and \autoref{thm:sing_qt} we have
		$$a(E_i';Y',Y'_0)=a(E_i;W,W_0)+(p-1)\cdot a(E_i;\sF_{p^r})=i(4-p)+p-3$$
and
		$$a(F_i';Y',Y'_0)=a(F_i;W,W_0)+(p-1)\cdot a(F_i;\sF_{p^r})=i(4-p)-1.$$
As $b^*\sF_{p^r}$ is lc, by \autoref{thm:sing_qt} it suffices to ensure that 
	$$\discrep\left(Z, -\sum_i a(E_i';Y',Y_0')E_i'-\sum_i a(F_i';Y',Y_0')F_i'\right)\geq -1$$
to obtain that $(Y',Y_0')$ is lc. This holds as soon as $i(4-p)\geq 0$ for every $i=1,\dots,p^r$, which happens if (and only if) $p=3$, so the proof is complete.
\end{proof}


\begin{remark}\label{rmk:compactification_is_difficult}
One can try to compactify the examples of \autoref{thm:loc_stable_families_II} using the method of \autoref{thm:stable_families}. This is quite delicate, as
the $1$-foliation $\psi=y^3\partial_x+x\partial_y$ acquires complicated singularities when we compactify. For example, if we regard $\bA^2_{x,y}$ has a standard chart of $\bP^2$, then on the chart with coordinates $u=1/x, v=y/x$ the $1$-foliation induced by $\psi$ is generated by $uv^3\partial_u+(v^4-u^2)\partial_v$. Similarly, if we regard $\bA^2_{x,y}$ has a standard chart of the Hirzebruch surface $F_n$, then on the chart with coordinates $u=1/y, v=1/(xy^n)$ the $1$-foliation induced by $\psi$ is generated by $u^{2n+5}\partial_u+(v^3+nu^{2n+4}v)\partial_v$.
\end{remark}

\appendix
\section{KSBA moduli stacks in positive characteristics}\label{appendix}
In this appendix we explore the consequences of \autoref{thm_intro:stability} for KSBA moduli theory. The point is to show that the most straightforward adaptations of the characteristic $0$ definitions do not yield satisfactory theories.

As we will see, the point is the unavoidable appearance of non-$S_2$ fibers as limits of stable families over punctured curves. To formulate this in a precise way we need to define which moduli stacks we are working with (or, equivalently, which families we allow in our theory), and we face two difficulties:
	\begin{enumerate}
		\item Our examples involve boundary divisors with small coefficients, and defining stable families of pairs with such coefficients over general bases is already extremely delicate in characteristic $0$;
		\item The algebraicity of the stacks we could write down is an open question (\footnote{
			On the other hand, any reasonable KSBA moduli stack will have a finite diagonal by \cite{Patakfalvi_Projectivity_moduli_space_of_surfaces_in_pos_char}, first paragraph of the proof of Theorem 9.7, and will be separated by \autoref{lemma:extension_iso} below.
		}), except in the surface case where it should follow from \cite{Hacon_Kovacs_Boundedness_slc_surfaces_general_type}.
	\end{enumerate}
We go around the first difficulty by using a minimalistic, under-determined, working definition of the KSBA moduli stacks. We will ignore the second difficulty by imposing algebraicity in our definition: it is natural to expect from the point of view of moduli theory (since we expect boundedness to hold), and on a technical level it allows us to state generic conditions for the objects parametrized by the stacks. In any case, (non-)algebraicity is irrelevant for the phenomenon that we will exhibit.


\medskip
We work over an algebraically closed field $k$ of characteristic $p>0$. First, we define one-parameter stable pairs and stable families of pairs, generalizing slightly \autoref{def:local_stability}.

\begin{definition}
A proper pair $(X,\Delta)$ over $k$ is \emph{stable} if it is semi-log canonical (slc) (\footnote{
	We refer to \cite{Kollar_Singularities_of_the_minimal_model_program} and to \cite{Posva_Gluing_for_surfaces_and_threefolds} for the definition of semi-log canonical singularities.
}) and if $K_X+\Delta$ is ample.
\end{definition}

\begin{definition}\label{def_app:stable_families}
Let $T$ be regular $k$-curve. Let $f\colon X\to T$ be a flat proper pure-dimensional morphism with geometrically reduced fibers. Let $\Delta$ be a $\bQ$-divisor on $X$ such that $f\colon (X,\Delta)\to T$ is a family of pairs \cite[Definition 2.2]{Kollar_Families_of_varieties_of_general_type}. We say that $f\colon (X,\Delta)\to T$ is a \emph{stable family} if:
	\begin{enumerate}
		\item $(X,X_t+\Delta)$ is slc for every closed point $t\in T$, and
		\item $K_X+\Delta$ is $f$-ample.
	\end{enumerate}
\end{definition}

\begin{lemma}[cf.\ {\cite[Lemma 9.4]{Patakfalvi_Projectivity_moduli_space_of_surfaces_in_pos_char}}]\label{lemma:extension_iso}
Let $T$ be an affine regular $k$-curve. Let $(X_i,\Delta_i)\to T$ $(i=1,2)$ be two stable families over $T$ whose total spaces $X_i$ are normal. Assume that for a fixed closed point $0\in T$, with complement $T^*=T\setminus \{0\}$, there is a $T^*$-isomorphism of pairs
		$$\phi^*\colon (X_1,\Delta_1)\times_TT^*\cong (X_2,\Delta_2)\times_T T^*.$$
Then $\phi^*$ extends to a $T$-isomorphism $\phi\colon (X_1,\Delta_1)\cong (X_2,\Delta_2)$.
\end{lemma}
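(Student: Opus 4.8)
The plan is to prove the stronger statement that the identification of the generic fibres forces the two total spaces to be a common relative ample model, so that the extension is automatic. Since extending an isomorphism across the single closed point $0$ is local on $T$, I would first localise and assume $T=\Spec R$ with $R$ a DVR, closed point $0$ and fraction field $K$; it suffices to produce the extension over $\Spec R$, as it then spreads out to a neighbourhood of $0$ and glues with $\phi^*$ on $T^*$. Let $W$ be the normalisation of the closure of the graph of $\phi^*$ inside $X_1\times_T X_2$, with the two projections $\pi_i\colon W\to X_i$, which are proper birational and restrict to isomorphisms over $T^*$. Writing $A_i=K_{X_i}+\Delta_i$ (a $\bQ$-Cartier, $f_i$-ample divisor) and $L_i=\pi_i^*A_i$, I set $G=L_1-L_2$. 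Because $\phi^*$ identifies the pairs over $T^*$ and no component of $\Delta_i$ is vertical (the components of $\Delta_i$ dominate $T$), the strict transforms of $\Delta_1$ and $\Delta_2$ on $W$ coincide and all horizontal discrepancies agree; hence $G$ is supported on the special fibre $W_0=g^*[0]$, where $g=f_i\circ\pi_i$.

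Next I would record two nefness properties. If $C$ is a curve contracted by $\pi_1$, then $C$ lies in a fibre of $g$, so $\pi_{2,*}C$ is an effective (or zero) curve in a fibre of $f_2$; relative ampleness of $A_2$ gives $L_2\cdot C=A_2\cdot\pi_{2,*}C\ge 0$, and since $L_1$ is $\pi_1$-trivial this shows that $-G=L_2-L_1$ is $\pi_1$-nef. Symmetrically $G$ is $\pi_2$-nef.

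The heart of the argument is to pin down the signs of the pushforwards $\pi_{i,*}G$ and then invoke the negativity lemma \cite[Lemma 3.39]{Kollar_Mori_Birational_geometry_of_algebraic_varieties}. Write $G=\sum_E\big(a(E;X_2,\Delta_2)-a(E;X_1,\Delta_1)\big)E$, the sum running over the vertical prime divisors $E\subset W_0$. The key input is local stability together with reducedness of the fibres: since $(X_i,X_{i,0}+\Delta_i)$ is lc and $\pi_i^*X_{i,0}=W_0$, for every component $E$ of $W_0$ one has
$$a(E;X_i,X_{i,0}+\Delta_i)=a(E;X_i,\Delta_i)-\mult_E W_0\ge -1,$$
hence $a(E;X_i,\Delta_i)\ge \mult_E W_0-1\ge 0$. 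Moreover the $\pi_1$-non-exceptional components of $W_0$ are exactly the strict transforms of the (reduced) components of $X_{1,0}$, and for these $a(E;X_1,\Delta_1)=0$; therefore $\pi_{1,*}G=\sum a(E;X_2,\Delta_2)\,\pi_1(E)\ge 0$. As $-G$ is $\pi_1$-nef, the negativity lemma yields $G\ge 0$. Running the same computation for $\pi_2$ gives $\pi_{2,*}G\le 0$, and since $G$ is $\pi_2$-nef the negativity lemma gives $G\le 0$. Thus $G=0$, that is $\pi_1^*A_1=\pi_2^*A_2$.

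Finally, for $m$ sufficiently divisible the projection formula and $\pi_{i,*}\sO_W=\sO_{X_i}$ give $f_{i,*}\sO_{X_i}(mA_i)=g_*\sO_W(mL_i)$, and $G=0$ identifies these compatibly with the graded structure and with $\phi^*$ over $T^*$. Taking $\Proj_T$ of the relative log-canonical rings (finitely generated and reconstructing $X_i$ because $A_i$ is relatively ample) produces an isomorphism $\phi\colon X_1\cong X_2$ over $T$ extending $\phi^*$; since $\Delta_i$ is the closure of $\Delta_{i,K}$ and $\phi^*$ matches these, $\phi_*\Delta_1=\Delta_2$. I expect the main obstacle to be exactly the sign control of $\pi_{i,*}G$: this is where the hypotheses of local stability (the lc-ness of $(X_i,X_{i,0}+\Delta_i)$) and of geometrically reduced special fibres are essential, and without them the vertical divisor $G$ need not vanish.
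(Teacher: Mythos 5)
Your proof is correct, and it is a genuine variant of the paper's argument rather than a reproduction of it. Both proofs start from the normalized graph closure, both hinge on the same key estimate --- for a vertical divisor $E$, the lc-ness of $(X_i,\Delta_i+X_{i,0})$ together with Cartier-ness of $X_{i,0}$ forces $a(E;X_i,\Delta_i)\geq \mult_E(\pi_i^*X_{i,0})-1\geq 0$ --- and both finish by realizing $X_i$ as $\Proj_T$ of the relative log canonical algebra. The difference is the middle step. The paper writes $\alpha_i^*(K_{X_i}+\Delta_i)=K_Z+\Gamma_{i,-}+\Gamma_{i,+}$ and uses the estimate only to conclude that every component of the positive part $\Gamma_{i,+}$ (the strict transform of $\Delta_i$ plus the negative-discrepancy exceptional divisors) must dominate $T$; since $\phi^*$ identifies the pairs over $T^*$, this horizontality gives $\Gamma_{1,+}=\Gamma_{2,+}$ outright, and the two $\Proj$ descriptions coincide --- no intersection theory and no negativity lemma. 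You instead prove the stronger intermediate statement $\pi_1^*(K_{X_1}+\Delta_1)=\pi_2^*(K_{X_2}+\Delta_2)$, using relative ampleness for the nefness of $\mp G$ along the two contractions and the same discrepancy estimate for the signs of $\pi_{i,*}G$, then applying the negativity lemma \cite[Lemma 3.39]{Kollar_Mori_Birational_geometry_of_algebraic_varieties} twice; this is the classical uniqueness-of-relative-ample-models argument. What each buys: the paper's route is shorter and avoids the negativity lemma entirely (it does hold in positive characteristic via resolution of surfaces, so your use of it is legitimate, but it is an extra ingredient); your route yields exact equality of the pullbacks rather than merely of their positive parts, and is a more standard template that generalizes readily. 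Two minor corrections: the localization to a DVR is unnecessary, since $\pi_i$ is an isomorphism over $T^*$, so every exceptional divisor already lies over $0$ and the argument runs verbatim over $T$; and your closing claim that geometric reducedness of the fibres is essential to the sign control overstates its role --- the estimate $a(E;X_i,\Delta_i)\geq \mult_E W_0-1\geq 0$ uses only that $X_{i,0}$ is Cartier and that $(X_i,X_{i,0}+\Delta_i)$ is lc, which is exactly how the paper uses the hypotheses as well.
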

\begin{proof}
If $Z$ is the normalization of the closure of the graph of $\phi^*$, with projections $\alpha_i\colon Z\to X_i$, let us write for each $i$
		$$\alpha_i^*(K_{X_i}+\Delta_i)=K_Z+\underbrace{\Gamma_{i,-}}_{\leq 0}+\underbrace{\Gamma_{i,+}}_{\geq 0}$$
where $\Gamma_{i,-}$ and $\Gamma_{i,+}$ have no components in common. We have
		$$X_i=\Proj_T \bigoplus_{m\geq 0}H^0(X_i, \lfloor m(K_{X_i}+\Delta_i\rfloor)
		= \Proj_T \bigoplus_{m\geq 0} H^0(Z, \lfloor m(K_Z+\Gamma_{i,+}\rfloor).$$
Now since $(X_i,\Delta_i+X_{i,0})$ is lc and $X_{i,0}$ is Cartier, if $E$ is an exceptional divisor over $X_i$ with $a(E;X_i,\Delta_i)<0$ then $E$ dominates $T$. In particular, every component of $\Gamma_{i,+}$ dominates $T$. By assumption on $\phi^*$ it follows that $\Gamma_{1,+}=\Gamma_{2,+}$ and therefore the above $\Proj$ description of the $X_i$ yields the extension $\phi\colon (X_1,\Delta_1)\cong (X_2,\Delta_2)$. 
\end{proof}

Now we want to give our working definition of KSBA moduli stacks. We will only specify their values on points and regular curves, using the two definitions above. But at least for psychological comfort, it is better to specify what kind of values our stacks take in general. We follow the first step of the approach of \cite{Kollar_Families_of_varieties_of_general_type} (see in particular Chapter 7 there): the objects of interest lie amongst \emph{relative effective Mumford divisors}. 

We refer to \cite[4.29, 4.78]{Kollar_Families_of_varieties_of_general_type} for the definition of a relative effective Mumford divisor $D$ supported on a morphism $X\to T$. To simplify the terminology, from now on a relative effective Mumford divisor will refer to the $2$-tuple $(X\to T, D)$. As explained in \cite[4.29]{Kollar_Families_of_varieties_of_general_type}, given any $h\colon T'\to T$ we can form the pullback $h_X^{[*]}D$ which is again a relative effective Mumford divisor supported on $X_{T'}\to T'$, and this pullback operation is functorial. Thus we can make the following definition:

\begin{definition}[Fibered category of relative Mumford divisors]
Fix a (possibly empty) vector of positive rational numbers $\bold{c}=(c_1,\dots,c_r)\in (\bQ_{>0})^{\oplus r}$. We define the category $\Mum_\bold{c}$ as follows:
	\begin{itemize}
		\item its objects are $2$-tuples $\sD=(f\colon X\to T,\sum_{i=1}^rc_iD_i)$ where $f$ is a morphism of $k$-schemes, and each $(f\colon X\to T,D_i)$ is a relative effective Mumford divisor supported on $f$;
		\item the class of arrows is generated by the following ones:
			\begin{enumerate}
				\item if $\sD=(X\to T,\sum_{i=1}^rc_iD_i)$ and $\sD'=(X'\to T,\sum_{i=1}^rc_iD_i')$ are two relative Mumford divisors over a common base $T$, then every $T$-isomorphism $\phi\colon X\cong X'$ sending $D_i$ to $D_i'$ for each $i$ defines an arrow $\phi\colon \sD\cong \sD'$;
				\item if $\sD$ is a relative Mumford divisor over $T$ and $h\colon T'\to T$ is a $k$-morphism, then there is a pullback morphism $h^{[*]}\sD\to\sD$.
			\end{enumerate}
	\end{itemize}
We have a forgetful functor $\Mum_\bold{c}\to \Sch_k$ sending $(X\to T,\sum_{i=1}^rc_iD_i)$ to $T$, the isomorphism $\phi$ to $\id_T$, and $h^{[*]}\sD\to\sD$ to $h$. By functoriality of pullbacks, this makes $\Mum_\bold{c}$ into a fibered category over $\Sch_k$.
\end{definition}

With these definitions in place, we can at last introduce our working definition of KSBA moduli stacks. 

\begin{terminology}
A \emph{stack over $k$} means a stack in groupoids over the big \'{e}tale site of $k$ in the sense of \cite[02ZH]{Stacks_Project}. \emph{Algebraic stacks} are understood in the sense of \cite[026N]{Stacks_Project}.
\end{terminology}

\begin{definition}[Potential KSBA moduli stacks]\label{def:KSBA_mod_stacks}
Let $\sM$ be an algebraic stack over $k$. We say that $\sM$ is a \emph{potential KSBA moduli stack} if there exist $n\in \bN$, $v\in \bQ_{>0}$ and $\bold{c}=(c_1,\dots,c_r)\in (\bQ_{>0})^{\oplus r}$ with the following properties:
	\begin{enumerate}
		\item There exists a fully faithful functor $\sM\hookrightarrow \Mum_\bold{c}$ of fibered categories over $\Sch_k$;
		\item if $K$ is an algebraically closed field extension of $k$, then $\sM(K)$ is the groupoid of all stable pairs $(X\to \Spec(K),\Delta=\sum_{i=1}^rc_i\Delta_i)\in \Mum_\bold{c}(K)$ with $\dim X=n$ and $(K_X+\Delta)^n=v$;
		\item if $T$ is a regular (germ of) $k$-curve then $\sM(T)$ is the set of $(X\to T,\Delta=\sum_{i=1}^rc_i\Delta_i)\in \Mum_\bold{c}(T)$ such that $(X,\Delta)\to T$ is a stable family.
	\end{enumerate}
We refer to the array $(n,v,\bold{c})$ as the \emph{numerical constants} of $\sM$.
\end{definition}

\begin{remark}\label{rmk:stable_families_with_nice_fibers}
Let us stress that if $\sM$ is a potential KSBA moduli stack and $T$ a regular $k$-curve, then the families in $\sM(T)$ are stable families of pairs \emph{whose underlying fibers are demi-normal (in particular reduced and $S_2$)}. This is forced by the way we compute pullbacks in $\Mum_\bold{c}$ and by the prescription of the values of $\sM$ on points.
\end{remark}

We introduce two variants of the above definition.

\begin{variant}[Potential KSBA-CM moduli stacks]\label{def:KSBA-CM_mod_stacks}
Let $\sM$ be a potential KSBA moduli stack over $k$. We say that $\sM$ is a \emph{potential KSBA-CM moduli stack} if there exists a dense open algebraic sub-stack $\sM^\text{CM}\subset \sM$ such that, whenever $(X,\Delta)\in \sM^\text{CM}(K)$ for an algebraically closed field $K$, the variety $X$ is Cohen--Macaulay.
\end{variant}

\begin{remark}
Because of \cite[12.2.1]{EGA_IV.3}, we think of a potential KSBA-CM moduli stack as an open sub-stack of a potential KSBA moduli stack. In characteristic $0$ we have the following striking picture: if $\fM=\fM^\text{KSBA}_{\bC,n,v,\bold{c}}$ is the KSBA moduli stack over $\bC$ with numerical constants $(n,v,\bold{c})$, defined as in \cite[\S 8.2]{Kollar_Families_of_varieties_of_general_type}, then we have a decomposition into \emph{open and closed} sub-stacks
		$$\fM=\bigsqcup_{i=2}^n\fM(i)$$
where $\fM(i)$ parametrizes pairs whose underlying varieties are $S_i$ but not $S_{i+1}$. This follows from \cite{Kollar_Kovacs_Deformations_of_lc_and_F-pure_sing} (see also \cite[Corollary 1.3]{Kollar_Kovacs_Lc_is_Du_Bois}). In particular $\fM^\text{CM}=\fM(n)$ is a connected component of $\fM$.
\end{remark}

\begin{variant}[Potential KSBA-$F$-injective moduli stacks]\label{def:KSBA-CM_mod_stacks}
Let $\sM$ be a potential KSBA moduli stack over $k$. We say that $\sM$ is a \emph{potential KSBA-$F$-injective moduli stack} if there exists a dense open algebraic sub-stack $\sM^{F\text{-inj}}\subset \sM$ such that, whenever $(X,\Delta)\in \sM^{F\text{-inj}}(K)$ for an algebraically closed field $K$, the variety $X$ is $F$-injective.
\end{variant}

$F$-injectivity is often described as a positive characteristic analogue of the du Bois condition in characteristic $0$: see for example \cite{Schwede_F-inj_are_Du_Bois} and \cite{Kovacs_Schwede_Hodge_theory_meets_MMP}. 

\begin{remark}[Generic singularities in \cite{Kollar_Families_of_3folds_in_pos_char}]\label{rmk:sing_Kollar_example}
We briefly comment on the singularities of a general fiber $Y^c_1$ of the examples of \cite{Kollar_Families_of_3folds_in_pos_char}, using the notations of that paper. As noted in \cite[Lemma 17]{Kollar_Families_of_3folds_in_pos_char}, $Y_1^c$ is not CM. It is easily seen from the construction that $Y_1^c$ is not lc: a resolution is given by $\tau_Y\colon Y_1\to Y_1^c$, and the exceptional locus is the divisor $Z_0$ which is a log canonical place. We check whether $Y_1^c$ is $F$-injective along $\tau_Y(Z_0)$. Recall that $Z_0\to \tau_Y(Z_0)$ is the elliptic fibration $\tau\colon S_1\to \bP^1$ \cite[Proof of Lemma 17]{Kollar_Families_of_3folds_in_pos_char}. Arguing as in \cite[Lemma 3.10, Proposition 3.12]{Arvidsson_Bernasconi_Patakfalvi_Properness_moduli_surfaces}, we see that for any $y\in \tau_Y(Z_0)$ we have 
	$$H^2_y(Y_1^c,\sO_{Y^c_1})=H^0_y(Y_1^c, R^1\tau_{Y,*}\sO_{Y_1})
	=H^0_y(\bP^1, R^1\tau_*\sO_{S_1}).$$
The fibration $\tau$ has one multiple fiber $pD_1$, and the other fibers are smooth elliptic curves \cite[\S 9]{Kollar_Families_of_3folds_in_pos_char}. The torsion of $R^1\tau_*\sO_{S_1}$ is concentrated at the image $\mathfrak{y}$ of $pD_1$, and we have $\dim H^0_{\mathfrak{y}}(\bP^1, R^1\tau_*\sO_{S_1})=1$ \cite[Proof of Lemma 17]{Kollar_Families_of_3folds_in_pos_char}. By cohomology and base-change, this torsion module embeds into $H^1(pD_1,\sO_{pD_1})$, and to check $F$-injectivity it remains to understand the action of Frobenius on this cohomology group. By \cite[Proof of Lemma 17]{Kollar_Families_of_3folds_in_pos_char} we have $H^1(pD_1,\sO_{pD_1})=H^1(E,F_{p-1}\oplus \sO_E)$ where $E$ is an elliptic curve and $F_{p-1}$ is the unique unipotent indecomposable bundle of rank $p-1$ on $E$. By induction on the rank of such bundles, we see that the action of Frobenius on $H^1(E,F_{p-1}\oplus \sO_E)$ is bijective if and only if $E$ is ordinary. In particular, we may arrange $Y_1^c$ to be $F$-injective.
\end{remark}



With all these preparations, we can finally formulate:

\begin{theorem}\label{thm:KSBA_mod_stacks}
Let $n\geq\max\{p,3\}$. Then there is a dense subset $I$ of $(0;+\infty)$ such that for every $v\in I$ there exists $\bold{c}=\bold{c}(v)\neq \emptyset$ with the following property: there is no \emph{proper} potential KSBA, nor KSBA-CM/$F$-injective, moduli stack over $k$ with numerical constants $(n,v,\bold{c})$.
\end{theorem}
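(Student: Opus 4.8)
The plan is to argue by contradiction, feeding the pathological family of \autoref{thm:stable_families} into the valuative criterion of properness and then invoking the separatedness statement \autoref{lemma:extension_iso}. Since a potential KSBA-CM (resp.\ KSBA-$F$-injective) moduli stack is in particular a potential KSBA moduli stack, it suffices to treat the KSBA case; the variants then follow \emph{a fortiori}, the only extra observation being that the general fibers of our families are Cohen--Macaulay and $F$-injective, so that they would populate the dense open substacks $\sM^{\mathrm{CM}}$, resp.\ $\sM^{F\text{-inj}}$.

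First I would fix the numerical data. Taking $N=n$ in \autoref{thm:stable_families} produces, for a suitable boundary $\sH$, a stable family $(\sY,\frac1p\sH)\to\bA^1$ of relative dimension $n$ whose general fibers are klt, Cohen--Macaulay and $F$-injective ($\mu_p$-quotient singularities, or regular when $p=2$) and whose central fiber is reduced but non-$S_2$. By \autoref{rmk:boundary_divisor} the divisor $\sH$ may be taken irreducible, so the coefficient vector is $\bold{c}=(\tfrac1p)\neq\emptyset$. Setting $v=(K_{\sY_t}+\frac1p\sH_t)^n$ for $t\neq 0$ and varying the ample class together with the coefficient $\frac1m$ as in \autoref{rmk:volume}, the resulting set $I$ of volumes is dense in $(0,+\infty)$; I would carry out the underlying intersection computation only far enough to read off this density.

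Next, assuming a \emph{proper} potential KSBA moduli stack $\sM$ with constants $(n,v,\bold{c})$ existed, I would restrict $(\sY,\frac1p\sH)$ over $T^*=\bA^1\setminus\{0\}$, obtaining an object of $\sM(T^*)$ since it is a stable family with klt (hence demi-normal) fibers of dimension $n$ and volume $v$. Localizing at $0$ gives a $K$-point of $\sM$ over $K=\Frac\sO_{\bA^1,0}$, and the valuative criterion of properness furnishes, after a finite base change $C\to\bA^1$ corresponding to an extension of DVRs $R\subset R'$, a limit $(\sX,\Delta_\sX)\in\sM(\Spec R')$. By \autoref{rmk:stable_families_with_nice_fibers} its central fiber $\sX_0$ is demi-normal, in particular reduced and $S_2$. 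On the other hand, the last assertion of \autoref{thm:stable_families} provides a competing, explicit limit: the localization $\sY_{R'}$ at $c\in C$ of $\sY_C$, which is a stable family over $\Spec R'$ with \emph{normal} total space and non-$S_2$ central fiber, agreeing with $\sX$ over $\Spec K'$.

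The key point, and the step I expect to require the most care, is to verify that the abstract limit $\sX$ also has \emph{normal} total space, so that \autoref{lemma:extension_iso} applies to the pair $(\sX,\sY_{R'})$. Here I would argue that $\sX$ is slc, hence $S_2$, and is normal over $\Spec K'$ since there it agrees with the klt family $\sY_{K'}$; thus any failure of $R_1$ must occur at a codimension-one point $\zeta\in\sX$ lying in $\sX_0$, necessarily the generic point of a component of the Cartier divisor $\sX_0=V(\pi)$. If $\sX$ were nodal (non-regular) at such a $\zeta$, then $\sO_{\sX,\zeta}$ would be a one-dimensional Cohen--Macaulay local domain with non-principal maximal ideal, which forces the Artinian quotient $\sO_{\sX,\zeta}/(\pi)=\sO_{\sX_0,\zeta}$ to be non-reduced; this contradicts the reducedness of the demi-normal fiber $\sX_0$. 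Hence $\sX$ is $R_1$, and therefore normal. Applying \autoref{lemma:extension_iso} then yields an isomorphism of pairs $(\sX,\Delta_\sX)\cong(\sY_{R'},\frac1p\sH_{R'})$ over $R'$, whence $\sX_0\cong(\sY_{R'})_0$. But $\sX_0$ is $S_2$ while $(\sY_{R'})_0$ is non-$S_2$, a contradiction. This rules out a proper potential KSBA moduli stack with constants $(n,v,\bold{c})$, and with it the Cohen--Macaulay and $F$-injective variants.
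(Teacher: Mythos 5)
Your proposal is correct and follows essentially the same route as the paper's proof: feed the family of \autoref{thm:stable_families} into the valuative criterion of properness over a finite extension of DVRs, establish normality of both the abstract and the explicit limits so that \autoref{lemma:extension_iso} applies, and derive a contradiction between the demi-normal (hence $S_2$) fiber required by \autoref{rmk:stable_families_with_nice_fibers} and the non-$S_2$ central fiber of $\sY_{R'}$, with density of the volume set obtained by varying the boundary as in \autoref{rmk:volume}. Your codimension-one analysis of the abstract limit $\sX$ (the reduced Cartier special fiber forces a principal maximal ideal, hence regularity, at the generic points of $\sX_0$, so $\sX$ is $R_1$ and $S_2$, hence normal) is exactly the justification behind the paper's one-line assertion that the limit's total space is normal, and the volume computation you defer is the routine intersection calculation the paper carries out explicitly.
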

\begin{proof}
Let $T=\bA^1_t$ and consider some family $f\colon (\sY,\sB)\to T$ constructed in \autoref{thm:stable_families} of relative dimension $n$. There are many choices for $\sB$, and we exploit this later. For now let $T^*=T\setminus\{0\}$ and $f^*\colon (\sY^*,\sB^*)\to T^*$ denote the family restricted over the punctured curve. A geometric fiber of $f^*$ has only $\mu_p$-quotient singularities: in particular it is Cohen--Macaulay and $F$-injective \cite[Theorem 1]{Posva_Singularities_quotients_by_1_foliations}. So if $\sM$ is a potential KSBA (or KSBA-CM/$F$-injective) moduli stack whose numerical constants match those of $(\sY^*,\sB^*)\to T^*$, we must have $[(\sY^*,\sB^*)\to T^*]\in \sM(T^*)$. Restricting over the generic point of $T^*$, we obtain $[f]\colon \Spec(k(t))\to \sM$. I claim that there is no finite extension of DVRs $k[t]_{(t)}\hookrightarrow R$ such that we have a commutative diagram over $k$
	$$\begin{tikzcd}
	\Spec(\Frac(R))\arrow[r, "\iota"]\arrow[d, hook] & \Spec(k(t)) \arrow[r, "{[}f{]}"] & \sM \\
	\Spec(R) \arrow[rru, "{[}g{]}" below right]
	\end{tikzcd}$$
where $\iota$ is induced by the DVRs extension. This implies non-properness of $\sM$ by \cite[0CLZ]{Stacks_Project}.

We proceed by contradiction. Assume that there exists such an extension of DVRs and morphism $[g]\colon \Spec(R)\to \sM$. This implies that there exists a stable family $g\colon (\sY',\sB')\to \Spec(R)$, belonging to $\sM(R)$, whose generic fiber is the base-change of the generic fiber of $(\sY,\sB)\to T$ along $\iota$. The scheme $\sY'$ is also normal, since its generic fiber is normal and its special fiber is reduced. Moreover, by \autoref{thm:stable_families} the base-change $\sY_R$ is normal. Therefore by \autoref{lemma:extension_iso} we actually have an $R$-isomorphism $(\sY_R,\sB_R)\cong (\sY',\sB')$. So $(\sY_R,\sB_R)\in \sM(R)$: but the central fiber of $\sY_R\to \Spec(R)$ is not $S_2$ by \autoref{thm:stable_families}, and we obtain a contradiction with the definition of the elements of $\sM(R)$ (see \autoref{rmk:stable_families_with_nice_fibers}).

It remains to prove that as we change the boundary $\sB$, the set of volumes $(K_{\sY_t}+\sB_t)^n$, for $t$ an arbitrary point in $T^*$, is dense in the interval $(0;+\infty)$. Recall that the support of $\sB_t$ is the image through a degree $p$ quotient map
		$$\bP^{n-1}\times E\longrightarrow \sY_t$$
of a divisor $H_0$. By \autoref{rmk:volume} it suffices to consider the case where $H_0$ belongs to the $\bQ$-linear systems 
	$$\frac{1}{s}\left|(\sO_{\bP^{n-1}}(n+p)\boxtimes\sO_E(e))^{\otimes r}\right|$$
where $r\geq 3$ and $s\geq 1$ are integers. From \autoref{eqn:pullback_formula} and \autoref{lemma:global_foliation} it follows that
		\begin{eqnarray*}
		(K_{\sY_t}+\sB_t)^n&=&p\cdot (K_{\bP^{n-1}\times E}+(p-1)K_{\sG_t}+H_0)^n \\
		&=& p\cdot \left[\sO_{\bP^{n-1}}\left(
		p-n-1+\frac{r}{s}(n+p)
		\right)\boxtimes \sO_E\left(\frac{r}{s}e\right)\right]^n \\
		&=& np\cdot \frac{r}{s}\cdot \left(
		p-n-1+\frac{r}{s}(n+p)
		\right)^n.
		\end{eqnarray*}
We consider this expression as a function $\nu$ in $r/s\in \bR_{>0}$. Then $\nu$ is continuous, increases to infinity with $r/s$, and is bounded below by
		$$\lim_{\frac{r}{s}\to 0^+} np\cdot \frac{r}{s}\cdot \left(
		p-n-1+\frac{r}{s}(n+p)
		\right)^n=0.$$
By continuity the image $I=\nu(\bQ_{>0})$ is dense in $(0;+\infty)$. By construction, for every $v=\nu(r/s)\in I$ there is a family $(\sY^*,\sB^*)\to T^*$ constructed by \autoref{thm:stable_families} which belongs to any potential KSBA or KSBA-CM moduli stacks with numerical constants $(n,v,\bold{c}\neq \emptyset)$ (\footnote{
	Here $\bold{c}$ depends on the choice of $H_0$: since we can take it to have coefficients $1/s$, one possible value of $\bold{c}$ is the 1-vector $(1/sp)$.
	}). This completes the proof.
\end{proof}

\bibliographystyle{alpha}
\bibliography{Bibliography}

\begin{thebibliography}{KKMSD73}

\bibitem[ABL22]{Arvidsson_Bernasconi_Lacini_KVV_for_log_dP_surfaces_in_pos_char}
Emelie Arvidsson, Fabio Bernasconi, and Justin Lacini.
\newblock On the {K}awamata-{V}iehweg vanishing theorem for log del {P}ezzo
  surfaces in positive characteristic.
\newblock {\em Compos. Math.}, 158(4):750--763, 2022.

\bibitem[ABP23]{Arvidsson_Bernasconi_Patakfalvi_Properness_moduli_surfaces}
Emelie Arvidsson, Fabio Bernasconi, and Zsolt Patakfalvi.
\newblock On the properness of the moduli space of surfaces over
  $\mathbb{Z}[1/30]$.
\newblock {\em ArXiv e-print, arXiv:2302.05651v2. To appear in Moduli}, 2023.

\bibitem[AP23]{Arvidsson_Posva_Normality_min_lc_centers_of_3folds}
Emelie Arvidsson and Quentin Posva.
\newblock Normality of minimal log canonical centers of threefolds in mixed and
  positive characteristic.
\newblock {\em To appear in Anal. Inst. Fourier (Grenoble)}, 2023.

\bibitem[Bau23]{Baudin_Duality_between_Cartier_crystals_and_perverse_sheaves}
Jefferson Baudin.
\newblock Duality between {C}artier crystals and perverse
  $\mathbb{F}_p$-sheaves, and applications to generic vanishing.
\newblock {\em ArXiv e-print, arXiv:2306.05378v1}, 2023.

\bibitem[BBK24]{Baudin_Bernasconi_Kawakami_Frob_stable_GRV_fails}
Jefferson Baudin, Fabio Bernasconi, and Tatsuro Kawakami.
\newblock The {F}robenius-stable version of the {G}rauert--{R}iemenschneider
  vanishing theorem fails.
\newblock {\em ArXiv e-print, arXiv:2312.13456v3}, 2024.

\bibitem[Ber19]{Bernasconi_Non_normal_plt_centers_in_pos_char}
Fabio Bernasconi.
\newblock Non-normal purely log terminal centres in characteristic
  {$p\geqslant3$}.
\newblock {\em Eur. J. Math.}, 5(4):1242--1251, 2019.

\bibitem[CR11]{Chatzistamatiou_Rulling_Higher_direct_images_in_pos_char}
Andre Chatzistamatiou and Kay R\"{u}lling.
\newblock Higher direct images of the structure sheaf in positive
  characteristic.
\newblock {\em Algebra Number Theory}, 5(6):693--775, 2011.

\bibitem[CR12]{Chatzistamatiou_Rulling_Hodge-Witt_coh_and_Witt-rat_sing}
Andre Chatzistamatiou and Kay R\"{u}lling.
\newblock Hodge-{W}itt cohomology and {W}itt-rational singularities.
\newblock {\em Doc. Math.}, 17:663--781, 2012.

\bibitem[Gro66]{EGA_IV.3}
A.~Grothendieck.
\newblock \'{E}l\'{e}ments de g\'{e}om\'{e}trie alg\'{e}brique. {IV}. \'{E}tude
  locale des sch\'{e}mas et des morphismes de sch\'{e}mas. {III}.
\newblock {\em Inst. Hautes \'{E}tudes Sci. Publ. Math.}, (28):255, 1966.

\bibitem[HK19]{Hacon_Kovacs_Boundedness_slc_surfaces_general_type}
Christopher~D. Hacon and S\'{a}ndor~J. Kov\'{a}cs.
\newblock On the boundedness of slc surfaces of general type.
\newblock {\em Ann. Sc. Norm. Super. Pisa Cl. Sci. (5)}, 19(1):191--215, 2019.

\bibitem[HZ20]{Hu_Zong_Base_change_local-stability_positive_char}
Zhi Hu and Runhong Zong.
\newblock On base change of local stability in positive characteristic.
\newblock {\em ArXiv e-print, arXiv:2001.04083v1}, 2020.

\bibitem[Jac37]{Jacobson_Abstract_derivations_and_Lie_algebras}
Nathan Jacobson.
\newblock Abstract derivation and {L}ie algebras.
\newblock {\em Trans. Amer. Math. Soc.}, 42(2):206--224, 1937.

\bibitem[KK10]{Kollar_Kovacs_Lc_is_Du_Bois}
J\'{a}nos Koll\'{a}r and S\'{a}ndor~J. Kov\'{a}cs.
\newblock Log canonical singularities are {D}u {B}ois.
\newblock {\em J. Amer. Math. Soc.}, 23(3):791--813, 2010.

\bibitem[KK20]{Kollar_Kovacs_Deformations_of_lc_and_F-pure_sing}
J\'{a}nos Koll\'{a}r and S\'{a}ndor~J. Kov\'{a}cs.
\newblock Deformations of log canonical and {$F$}-pure singularities.
\newblock {\em Algebr. Geom.}, 7(6):758--780, 2020.

\bibitem[KKMSD73]{KKMSD_Toroidal_embeddings}
G.~Kempf, Finn~Faye Knudsen, D.~Mumford, and B.~Saint-Donat.
\newblock {\em Toroidal embeddings. {I}}.
\newblock Lecture Notes in Mathematics, Vol. 339. Springer-Verlag, Berlin-New
  York, 1973.

\bibitem[KM85]{Katz_Mazur_Arithmetic_elliptic_curves}
Nicholas~M. Katz and Barry Mazur.
\newblock {\em Arithmetic moduli of elliptic curves}, volume 108 of {\em Annals
  of Mathematics Studies}.
\newblock Princeton University Press, Princeton, NJ, 1985.

\bibitem[KM98]{Kollar_Mori_Birational_geometry_of_algebraic_varieties}
J{\'a}nos Koll{\'a}r and Shigefumi Mori.
\newblock {\em Birational geometry of algebraic varieties}, volume 134 of {\em
  Cambridge Tracts in Mathematics}.
\newblock Cambridge University Press, Cambridge, 1998.
\newblock With the collaboration of C. H. Clemens and A. Corti, Translated from
  the 1998 Japanese original.

\bibitem[Kol13]{Kollar_Singularities_of_the_minimal_model_program}
J\'{a}nos Koll\'{a}r.
\newblock {\em Singularities of the minimal model program}, volume 200 of {\em
  Cambridge Tracts in Mathematics}.
\newblock Cambridge University Press, Cambridge, 2013.
\newblock With a collaboration of S\'{a}ndor Kov\'{a}cs.

\bibitem[Kol23a]{Kollar_Families_of_3folds_in_pos_char}
J\'{a}nos Koll\'{a}r.
\newblock Families of stable 3-folds in positive characteristic.
\newblock {\em \'{E}pijournal G\'{e}om. Alg\'{e}brique}, 7:Art. 6, 11, 2023.

\bibitem[Kol23b]{Kollar_Families_of_varieties_of_general_type}
J\'{a}nos Koll\'{a}r.
\newblock {\em Families of varieties of general type}, volume 231 of {\em
  Cambridge Tracts in Mathematics}.
\newblock Cambridge University Press, Cambridge, 2023.
\newblock With the collaboration of Klaus Altmann and S\'{a}ndor J. Kov\'{a}cs.

\bibitem[Kov18]{Kovacs_Non_CM_canonical_sing}
S\'{a}ndor~J. Kov\'{a}cs.
\newblock Non-{C}ohen-{M}acaulay canonical singularities.
\newblock In {\em Local and global methods in algebraic geometry}, volume 712
  of {\em Contemp. Math.}, pages 251--259. Amer. Math. Soc., [Providence], RI,
  [2018] \copyright 2018.

\bibitem[KS11]{Kovacs_Schwede_Hodge_theory_meets_MMP}
S\'{a}ndor~J. Kov\'{a}cs and Karl~E. Schwede.
\newblock Hodge theory meets the minimal model program: a survey of log
  canonical and {D}u {B}ois singularities.
\newblock In {\em Topology of stratified spaces}, volume~58 of {\em Math. Sci.
  Res. Inst. Publ.}, pages 51--94. Cambridge Univ. Press, Cambridge, 2011.

\bibitem[MS24]{Mitsui_Sato_Criterion_for_p_closedness_in_dim_2}
Kentaro Mitsui and Nobuo Sato.
\newblock A criterion for $p$-closedness of derivations in dimension two.
\newblock {\em ArXiv e-print, arXiv:2409.03442v1}, 2024.

\bibitem[Pat17]{Patakfalvi_Projectivity_moduli_space_of_surfaces_in_pos_char}
Zsolt Patakfalvi.
\newblock On the projectivity of the moduli space of stable surfaces in char
  $>5$.
\newblock {\em ArXiv e-print, arXiv:1710.03818v3}, 2017.

\bibitem[Pos21]{Posva_Gluing_surfaces_mixed_char}
Quentin Posva.
\newblock Gluing for stable families of surfaces in mixed characteristic.
\newblock {\em To appear in Algebraic Geometry (Foundation Compositio
  Mathematica)}, 2021.

\bibitem[Pos23]{Posva_Singularities_quotients_by_1_foliations}
Quentin Posva.
\newblock On the singularities of quotients by 1-foliations.
\newblock {\em ArXiv e-print, arXiv:2311.16694v3. To appear in Nagoya Math.
  Journal}, 2023.

\bibitem[Pos24a]{Posva_Gluing_for_surfaces_and_threefolds}
Quentin Posva.
\newblock Gluing theory for slc surfaces and threefolds in positive
  characteristic.
\newblock {\em Ann. Sc. Norm. Super. Pisa Cl. Sci. (5)}, 25(2):811--886, 2024.

\bibitem[Pos24b]{Posva_Resolution_of_1_foliations}
Quentin Posva.
\newblock Resolution of 1-foliations on surfaces and threefolds.
\newblock {\em ArXiv e-print, arXiv:2405.05735v1}, 2024.

\bibitem[PST23]{Polstra_Simpson_Tucker_F-pure_inversion_of_adjunction}
Thomas Polstra, Austin Simpson, and Kevin Tucker.
\newblock On ${F}$-pure inversion of adjunction.
\newblock {\em ArXiv e-print, arXiv:2305.17591v1}, 2023.

\bibitem[Sch09]{Schwede_F-inj_are_Du_Bois}
Karl Schwede.
\newblock {$F$}-injective singularities are {D}u {B}ois.
\newblock {\em Amer. J. Math.}, 131(2):445--473, 2009.

\bibitem[Sta]{Stacks_Project}
The {S}tacks {P}roject.
\newblock {\em \url{https://stacks.math.columbia.edu}}.

\bibitem[Tot19]{Totaro_Failure_Kodaira_vanishing_for_Fanos}
Burt Totaro.
\newblock The failure of {K}odaira vanishing for {F}ano varieties, and terminal
  singularities that are not {C}ohen-{M}acaulay.
\newblock {\em J. Algebraic Geom.}, 28(4):751--771, 2019.

\bibitem[Tot24]{Totaro_Terminal_non_CM_3folds}
Burt Totaro.
\newblock Terminal 3-folds that are not {C}ohen--{M}acaulay.
\newblock {\em ArXiv e-print, arXiv:2407.02608v2}, 2024.

\bibitem[Yas19]{Yasuda_Discrepancies_p_cyclic_quotients}
Takehiko Yasuda.
\newblock Discrepancies of {$p$}-cyclic quotient varieties.
\newblock {\em J. Math. Sci. Univ. Tokyo}, 26(1):1--14, 2019.

\end{thebibliography}

\end{document}